\newtheorem{theo}{Theorem}[section]
\newtheorem{pro}[theo]{Proposition}
\newtheorem{lem}[theo]{Lemma}
\newtheorem{cor}[theo]{Corollary}
\newcommand{\ra}{\rightarrow}
\theoremstyle{definition}
\newtheorem{defin}[theo]{Definition}
\newtheorem{exa}{Example}[section]
\newtheorem{prob}[theo]{Problem}
\theoremstyle{remark}
\newtheorem{rem}[theo]{Remark}
\begin{document}

\title{Random walks and isoperimetric profiles under moment conditions}
\author{Laurent Saloff-Coste\thanks{%
Partially supported by NSF grant DMS 1004771 and DMS 1404435} \\
%EndAName
{\small Department of Mathematics}\\
{\small Cornell University} \and Tianyi Zheng \\
%EndAName
{\small Department of Mathematics}\\
{\small Stanford University} }
\maketitle

\begin{abstract}
Let $G$ be a finitely generated group equipped 
with a finite symmetric generating set and the associated word 
length function $|\cdot |$. We study the behavior 
of the probability of return 
for random walks driven by symmetric measures $\mu$ that are such that
$\sum \rho(|x|)\mu(x)<\infty$ for increasing regularly varying or 
slowly varying functions $\rho$, for instance, $s\mapsto (1+s)^\alpha$,
$\alpha\in (0,2]$, or 
$s\mapsto (1+\log (1+s))^\epsilon$, $\epsilon>0$.
For this purpose we develop new relations between the isoperimetric profiles
associated with different symmetric probability measures. These techniques 
allow us to obtain a sharp $L^2$-version of Erschler's inequality 
concerning the F\o lner functions of wreath products. Examples and 
assorted applications are included.
\end{abstract}

\section{Introduction}
\setcounter{equation}{0}

Let $G$ be a finitely generated group. The following notation will be
used throughout this work.
Let $S=(s_1,\dots,s_k)$ be a fixed generating 
$k$-tuple and $S^*=\{e,s_1^{\pm1},\cdots, s_k^{\pm 1}\}$ 
be the associated symmetric generating set. 
Let $|\cdot|$ be the associated word-length 
so that $|g|$ is the least $m$ such that $g=\sigma_1\dots\sigma_m$ with 
$\sigma_i \in \mathcal S^*$ (and the convention that $|e|=0$, where $e$ 
is the identity element in $G$). Let $B(r)=\{g\in G:|g|\le r\}$ and 
let $V$ be the associated volume growth 
function defined by 
$$V(r)=|\{g\in G: |g|\le r\}|$$
where $|\Omega|=\#\Omega$ is the number of elements in $\Omega\subset G$. 
For $r\ge 1$, let
$\mathbf u_r$ be the uniform probability measure on $B(r)$ and set 
$\mathbf u=\mathbf u_1$, 
that is, 
\begin{equation}\label{nu}
\mathbf u_r=\frac{1}{|B(r)|}\mathbf 1_{B(r)}\mbox{ and } 
\mathbf u=\mathbf u_1=\frac{1}{|S^*|}\mathbf 1_{S^*}.
\end{equation}

Given two functions $f_1,f_2$ taking real values but defined on an arbitrary 
domain (not necessarily a subset of $\mathbb R$), we write  $f\asymp g$
to signify that there are constants $c_1,c_2\in (0,\infty)$ such that  
$c_1f_1\le f_2\le c_2f_1$.
Given two monotone real functions $f_1,f_2$, write $f_1\simeq f_2$ 
if there exists $c_i\in (0,\infty)$ such that 
$c_1f_1(c_2t)\le f_2(t)\le c_3f_1(c_4t)$ on the domain of definition of 
$f_1,f_2$ (usually, $f_1,f_2$ will be defined on a neighborhood of $0$ or 
infinity and tend to $0$ or infinity at either $0$ or infinity. In some cases, 
one or both functions are defined only on a countable set such as $\mathbb N$). 
We denote the associated order by $\lesssim$.
Note that the equivalence relation $\simeq$ distinguishes between power 
functions of different degrees and between stretched exponentials $\exp(-t^\alpha)$ 
of different exponent $\alpha>0$ but does not distinguishes between 
different rates of exponential growth or decay (e.g., $2^n\simeq 5^n$).
It is not hard to verify that the volume growth functions associated with 
two finite symmetric generating sets of a given group $G$ 
are $\simeq$-equivalent.

Given an arbitrary probability measure $\phi$ on a group $G$, 
we let $(S_n)_0^\infty$ 
denote the trajectory of the random walk driven by $\phi$ 
(often started at the identity element $e$).  We let $\mathbf P_\phi$
be the associated measure on $G^\mathbf N$ with $S_0=e$  and 
$\mathbf E _\phi$ the corresponding expectation 
$\mathbf E^x_\phi(F)=\int_{G^{\mathbf N}}
F(\omega)d\mathbf P^x_\phi(\omega)$. In particular,
$$\mathbf P_\phi(S_n=x)=\mathbf E_\phi(\mathbf 1_{x}(S(n)))=\phi^{(n)}(x).$$

\subsection{The random walk invariants $\Phi_{G,\rho}$ and 
$\widetilde{\Phi}_{G,\rho}$}
In \cite{PSCstab}, it is proved that, for any finitely generated group $G$, 
there exists a function 
$\Phi_G:\mathbb N\ra (0,\infty)$ such that, 
if $\mu$ is a symmetric probability measure with generating 
support and finite second moment, that is $\sum |g|^2\mu(g)<\infty$, then
$$\mu^{(2n)}(e)\simeq \Phi_G(n).$$
Further, \cite{PSCstab} proves that $\Phi_G$ is an invariant of quasi-isometry.
Throughout this paper and referring to definition  (\ref{nu}), 
we will use $n\mapsto \mathbf u^{(n)}(e)$ as our favorite representative 
for $\Phi_G$.

In \cite{BSClmrw}, A. Bendikov and the first author considered the question of
finding lower bounds for the probability of return $\mu^{(2n)}(e)$ when $\mu$
is symmetric and is only known to have a finite moment of 
some given exponent lower than $2$. 
Very generally, let $\rho: [0,\infty)\ra [1,\infty)$ be  given.
We say that a measure $\mu$ has finite $\rho$-moment if 
$\sum \rho(|g|)\mu(g)<\infty$. We say that $\mu$ has finite weak-$\rho$-moment
if 
\begin{equation} \label{weak}
W(\rho,\mu)
:= \sup_{s>0} \left\{s \mu(\{g: \rho(|g|) >s\})\right\} <\infty. 
\end{equation}

\begin{defin}[Fastest decay under $\rho$-moment] \label{def1}
Let $G$ be a countable group.
Fix a function $\rho:[0,\infty)\ra [1,\infty)$.
Let $\mathcal S_{G,\rho}$ be the set of all symmetric
probability  $\phi$ on $G$ with the properties that
$\sum \rho(|g|)\phi(g) \le 2\rho(0)$. Set
$$\Phi_{G,\rho}: n\mapsto \Phi_{G,\rho}(n)= \inf\left\{
\phi^{(2n)}(e) : \phi \in \mathcal S_{G,\rho}\right\}. $$
\end{defin}
In words, $\Phi_{G,\rho}$ provides the best lower bound valid 
for all convolution powers of probability measures in 
$\mathcal S _{G,\rho}$. The following variant deals with finite weak-moments
and will be key for our purpose.

\begin{defin}[Fastest decay under weak-$\rho$-moment] \label{def2}
Let $G$ be a countable group.
Fix a function $\rho:[0,\infty)\ra [1,\infty)$.
Let $\widetilde{\mathcal S}_{G,\rho}$ be the set of all symmetric
probability   $\phi$ on $G$ with the properties that
$W(\rho,\mu)\le 2\rho(0)$. Set
$$\widetilde{\Phi}_{G,\rho}: n\mapsto \widetilde{\Phi}_{G,\rho}(n)= 
\inf\left\{
\phi^{(2n)}(e) : \phi \in \widetilde{\mathcal S}_{G,\rho}\right\}. $$
\end{defin}

\begin{rem} Assume that $\rho$ has the property that 
$\rho(x+y)\le C( \rho(x)+\rho(y))$. Under this natural condition 
\cite[Cor 2.3]{BSClmrw} shows that $\Phi_{G,\rho} $ and 
$\widetilde{\Phi}_{G,\rho}$ stay strictly positive. Further,
\cite[Prop 2.4]{BSClmrw} shows that, for any symmetric probability measure 
$\mu$ on $G$ such that $\sum \rho(|g|)\mu(g)<\infty$
(resp. $W(\rho,\mu)<\infty$), 
there exist constants $c_1,c_2$ (depending on $\mu$)
 such that
$$\mu^{(2n)}(e)\ge c_1 \Phi_{G,\rho}(c_2 n)$$
 (resp. $\mu^{(2n)}(e)\ge c_1 \widetilde{\Phi}_{G,\rho}(c_2n)$).
This makes it very natural to consider not the functions $\Phi_{G,\rho}$ 
and $\widetilde{\Phi}_{G,\rho}$ themselves but their equivalence classes under 
the equivalence relation $\simeq$.
\end{rem}
\begin{rem}The reader may wonder why we are considering the weak-moment variant 
$\widetilde{\Phi}_{G,\rho}$. The reason is that it appears 
to be the more natural version of the two variants. 
For instance, when $G=\mathbb Z$, we do not know what the behavior  
of
$\Phi_{\mathbb Z,\rho_\alpha}$ is for $\alpha\in (0,2)$ whereas it is 
very well known and easy to show that 
$\widetilde{\Phi}_{\mathbb Z,\rho_\alpha}(n)\simeq n^{-1/\alpha}$, $\alpha\in (0,2)$.
\end{rem}

When $\rho_\alpha(s)= (1+s)^\alpha$ with $\alpha>0$, 
the main result of \cite{PSCstab}
implies that 
$$\Phi_{G}\simeq \Phi_{G,\rho_2}\simeq 
\Phi_{G,\rho_\alpha}\simeq \widetilde{\Phi}_{G,\rho_\alpha} 
\mbox{ for all } \alpha>2.$$
This holds even so there are great many finitely generated groups $G$
(indeed, uncountably many), for which we do not know how $\Phi_G$ behaves. 
For a group $G$ of polynomial volume growth with $V(r)\simeq r^D$, we know that
(see \cite{HSC,BSClmrw,SCZ0} and the references therein)
$$\Phi_{G}(n)\simeq n^{-D/2},\;\;\; 
\widetilde{\Phi}_{G,\rho_\alpha}(n)\simeq n^{-D/\alpha}, \alpha\in (0,2)$$ 
showing that the condition $\alpha>2$ cannot be relaxed.

Definitions \ref{def1}-\ref{def2} lead to the following problems.

\begin{prob} \label{prob1}
Let $G,H$ be two finitely generated groups. Let $\rho,\theta$
be two (nice) increasing functions with $\theta\le \rho$.   
\begin{enumerate} 
\item Does $\Phi_{G}\simeq \Phi_H$ imply 
$\widetilde{\Phi}_{G,\rho}\simeq \widetilde{\Phi}_{H,\rho}$?
\item Does $\widetilde{\Phi}_{G,\rho}\simeq \widetilde{\Phi}_{H,\rho}$ imply 
$\widetilde{\Phi}_{G,\theta}\simeq \widetilde{\Phi}_{H,\theta}$? 
\item Fix $\alpha\in (0,2)$. Is it true that 
$\widetilde{\Phi}_{G,\rho_\alpha}\simeq \widetilde{\Phi}_{H,\rho_\alpha}$ 
is equivalent to $\Phi_{G}\simeq \Phi_{H}$?
\item What is the behavior of $\widetilde{\Phi}_{G,\rho_2}$ and, 
more generally, of  $\widetilde{\Phi}_{G,\rho}$ when $\rho$ 
is close to $t\mapsto t^2$?
\end{enumerate}
\end{prob}

In contemplating these questions, it is reasonable to make additional 
assumptions on the functions $\rho,\theta$, for instance, one may want to 
assume that
 $\rho,\theta$ are continuous increasing functions satisfying the 
doubling condition $\exists \,C>0,\;\;\forall \,t>0,\;f(2t)\le Cf(t)$. 
Or one may even want to assume that $\rho,\theta$ are taken from a given 
list of  functions such a
$$s\mapsto (1+s)^{\alpha_0} \prod_{k=1}^m \log^{\alpha_k}_{[k]}(s)$$ 
with $\alpha_0\ge 0$ and $\alpha_1,\dots,\alpha_m\in \mathbb R$ (the first 
non-zero $\alpha_i$ should be positive). 
Here and in the rest of this paper, $\log_{[k]}$ is defined inductively by
$\log_{[k]}(s)=1+\log(\log_{[k-1]}(s))$, $\log_{[1]}(s)=1+\log(1+s)$. 
For instance, an interesting restricted version of the first question is 
concerned with the case when $\rho\in\{\rho_\alpha: \alpha\in (0,2)\}$.

If $G$ has polynomial volume 
growth of degree $D>0$ then
$\widetilde{\Phi}_{G,\rho_\alpha}(n)\simeq n^{-D/\alpha}$, $\alpha\in (0,2)$ 
while  (\cite{SCZ0,SCZ-pollog})
$$\widetilde{\Phi}_{G,\log_{[1]}^\epsilon} (n) 
\simeq \exp(-n^{1/(1+\epsilon)}).$$   
So,
$\widetilde{\Phi}_{G,\alpha}$ distinguishes between different degrees of 
growth whereas 
$\widetilde{\Phi}_{G,\log_{[1]}^\epsilon}$ does not (except between $D=0$ and $D>0$).

From a heuristic point of view, there are reasons to believe 
that the slower the function $\rho$ grows, the coarser the group invariant
$\widetilde{\Phi}_{G,\rho}$ is (modulo the equivalence relation $\simeq$). 
The first two questions stated in Problem \ref{prob1} relate to
this heuristic and ask if this conjectural picture is correct. Namely, 
if $\theta\le \rho$, is it correct that the partition one obtains by 
considering the classes of groups  sharing the same 
$\widetilde{\Phi}_{G,\theta}$ are 
obtained by lumping together classes corresponding to 
$\widetilde{\Phi}_{G,\rho}$.   The third question in Problem \ref{prob1}
asks whether the classes of groups one obtains by considering $\Phi_G$
and $\widetilde{\Phi}_{G,\rho_\alpha}$ (for some/any fixed $\alpha\in (0,2)$) are 
all exactly the same.

Question 4 is technically interesting because we do not have 
good techniques to understand the subtle difference of behavior between 
$\widetilde{\Phi}_{G,\rho_2}$ and $\Phi_{G}$. We 
obtain a sharp answer for group of polynomial volume growth (see Corollary \ref{cor-Pol-Phi}) and 
for some wreath product (see Theorem \ref{th-wr-Pol-rho}).

\subsection{The spectral profiles  $\Lambda_G$ and 
$\widetilde{\Lambda}_{G,\rho}$}
Given a symmetric probability measure $\phi$, consider the associated 
Dirichlet form
$$\mathcal E_\phi(f_1,f_2)=\frac{1}{2}
\sum_{x,y\in G}(f_1(xy)-f_1(x))(f_2(xy)-f_2(x))\phi(y)$$
and set
$$\Lambda_{G,\phi}(v)=\Lambda_\phi(v)=\inf\{ 
\lambda_\phi(\Omega): \Omega\subset G,\; |\Omega|\le v\}$$
where
\begin{equation}\label{def-eig}
\lambda_\phi(\Omega)=
\inf\{ \mathcal E_\phi(f,f): \mbox{support}(f)\subset \Omega, \|f\|_2=1\}.
\end{equation}
In words, $\lambda_\phi(\Omega)$ is the lowest eigenvalue of 
the operator of convolution by $\delta_e-\phi$ with Dirichlet boundary
condition in $\Omega$. This operator is associated with the discrete 
time Markov process corresponding to the $\phi$-random walk killed 
outside $\Omega$.  The function $v\mapsto \Lambda_\phi(v)$ is called the 
$L^2$-isoperimetric profile or spectral profile of $\phi$ 
(it really is an iso-volumic profile). The $L^2$-isoperimetric profile 
of a group $G$ is defined as the 
$\simeq$-equivalence class $\Lambda_G$ of the functions
$\Lambda_\phi$ associated to any symmetric probability measure 
$\phi$ with finite generating support.  In Section \ref{sec-PhiLambda}, 
we give a short review of the 
well-known relations that exist between the behavior of 
$n\mapsto\phi^{(2n)}(e)$ and $v\mapsto \Lambda_\phi(v)$. 
It will be useful to introduce the following definition analogous to 
Definition \ref{def2}.
\begin{defin} Let $\rho, \widetilde{\mathcal S}_{G,\rho}$  be as in 
{\em Definitions
\ref{def1}-\ref{def2}}. Set
$$\widetilde{\boldsymbol \Lambda}_{G,\rho}(v)= 
\sup\left\{ \Lambda_{\phi}(v): \phi\in \widetilde{\mathcal S}_{G,\rho}\right\}, 
\;\; v>0.$$
\end{defin}
In words, $\widetilde{\boldsymbol \Lambda}_{G,\rho}$ is the 
extremal spectral profile under the weak $\rho$-moment condition. 
Upper bounds on $\widetilde{\boldsymbol \Lambda}_{G,\rho}$ are tightly related to 
lower bounds on $\widetilde{\Phi}_{G,\rho}$ and vice-versa. See Section
\ref{sec-PhiLambda}.

The appendix provides examples of the computation $\Lambda_\phi$ 
(and assorted $L^p$-variants) for radial stable-like 
probability measures defined in terms of the word distance.  

\subsection{Main results}
The goal of this work is twofold. 
First, we develop a new approach 
to obtain lower bounds on $\Phi_{G,\rho}$ and $\widetilde{\Phi}_{G,\rho}$. This 
method is simpler than the technique developed in \cite{BSClmrw} and 
is more generally applicable. In particular, the technique in \cite{BSClmrw} 
fails badly when the function $\rho$ grows too slowly (e.g., logarithmically).
In contrast,  the approach developed below provides good 
lower bounds on $\Phi_{G,\rho}$
for any increasing 
slowly varying function $\rho$ on any group $G$ for which one has a 
lower bound on $\Phi_G$. Second, we develop a method that allow us to obtain 
sharp upper bounds on 
$\widetilde{\Phi}_{G,\rho}$ in the context of wreath products. Here, 
we make essential use of earlier work of A. Erschler \cite{Erschler2006}. 
Our contribution is to develop a technique that allows us to harvest the 
$L^1$-isoperimetric results of Erschler in order to bound the random walk 
invariants $\widetilde{\Phi}_{G,\rho}$.  Both goals are attained by focusing
on the notion of isoperimetric profile (the $L^2$-isoperimetric profile but 
also the $L^p$ versions, $p\ge 1$, especially $p=1$).    

Our main results regarding the spectral profile $\Lambda_\phi$ and the
extremal profile $\widetilde{\boldsymbol \Lambda}_{G,\rho}$ are stated in 
Theorems \ref{pro-compphiG}-\ref{th-Lpsi}. Theorem
\ref{pro-compphiG} gives a general and easily applicable 
upper bound on $\Lambda_\phi$ in terms of $\Lambda_G$ under 
weak-moment conditions on $\phi$. Theorem \ref{th-Lpsi}
gives a completely satisfactory positive answer to a 
spectral profile version of Problem \ref{prob1}(1) 
for a large class of slowly varying functions $\rho$ including all
moment conditions of logarithm or iterated logarithm type. The following 
statement captures the nature of these results.
\begin{theo} Let $G$ be a finitely generated group equipped.
Let $\rho:[0,\infty)\ra [1,\infty)$ be a continuous increasing function.
The spectral profile functions $\Lambda_G$ and 
$\widetilde{\boldsymbol \Lambda}_{G,\rho}$ satisfy
$$\widetilde{\boldsymbol \Lambda}_{G,\rho} 
\lesssim \Lambda_{G} 
\int_0^{1/\Lambda^{1/2}_G}\frac{sds}{\rho(s)}.$$  
Further, if $\rho(t)\simeq 
\left(\int_{t}^\infty\frac{ds}{(1+s)\ell(s)}\right)^{-1}$
where $\ell$ is a slowly varying function satisfying 
$\ell(t^a)\simeq \ell(t)$ for all $a>0$, we have
$$ \widetilde{\boldsymbol \Lambda}_{G,\rho} \simeq \frac{1}{\rho(1/\Lambda_G)}.$$
In particular, for any $k\ge 1$ and $\epsilon>0$,
$\widetilde{\boldsymbol \Lambda}_{G,\log_{[k]}^\epsilon} (v)\simeq 1/
\log^\epsilon_{[k]}(1/\Lambda_G)$.
\end{theo}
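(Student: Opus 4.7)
The plan is a one-line test-function argument for the upper bound, combined with a radial stable-like construction for the matching lower bound.

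For the upper bound, fix $v>0$ and $\phi\in \widetilde{\mathcal S}_{G,\rho}$. Pick $\Omega\subset G$ with $|\Omega|\le v$ together with a near-minimiser $f$ for $\lambda_{\mathbf u}(\Omega)$, so that $\mathcal E_{\mathbf u}(f,f)\asymp \Lambda_G(v)=:\Lambda$ and $\|f\|_2=1$. Telescoping $f(xy)-f(x)$ along a word-geodesic from $e$ to $y$ and applying Cauchy--Schwarz pointwise in $x$ gives
\begin{equation*}
\sum_{x}(f(xy)-f(x))^2\;\le\;C\,|y|^2\,\mathcal E_{\mathbf u}(f,f),
\end{equation*}
and splitting $\phi$ at radius $R$ --- using $(f(xy)-f(x))^2\le 2(f(xy)^2+f(x)^2)$ on $\{|y|>R\}$ --- yields
\begin{equation*}
\mathcal E_\phi(f,f)\;\lesssim\;\Lambda\sum_{|y|\le R}|y|^2\phi(y)\;+\;\phi(\{|y|>R\}).
\end{equation*}
The weak-$\rho$-moment hypothesis $W(\rho,\phi)\le 2\rho(0)$, combined with the (strict) monotonicity of $\rho$, gives $\phi(\{|y|>s\})\lesssim 1/\rho(s)$; integration by parts then yields $\sum_{|y|\le R}|y|^2\phi(y)\lesssim \int_{0}^{R} s\,ds/\rho(s)$ and $\phi(\{|y|>R\})\lesssim 1/\rho(R)$. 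Setting $R=1/\sqrt{\Lambda}$ and using monotonicity of $\rho$ to absorb the tail $1/\rho(R)$ into $\Lambda\int_{R/2}^{R}s\,ds/\rho(s)$, I obtain $\Lambda_\phi(v)\lesssim \Lambda_G(v)\int_{0}^{1/\sqrt{\Lambda_G(v)}} s\,ds/\rho(s)$, and taking the supremum over $\phi$ gives the first inequality.

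For the sharp formula in the slowly varying case, the assumption on $\rho$ forces $\rho(t^a)\simeq \rho(t)$ for every $a>0$ --- so in particular $\rho(1/\sqrt{\Lambda})\simeq \rho(1/\Lambda)$ --- and a Karamata-type estimate for the slowly varying function $1/\rho$ reduces $\int_{0}^{T}s\,ds/\rho(s)$ to $T^{2}/\rho(T)$ up to constants; this collapses the general upper bound to $1/\rho(1/\Lambda_G)$. To match this from below on $\widetilde{\boldsymbol \Lambda}_{G,\rho}$, the plan is to exhibit an explicit symmetric $\phi\in \widetilde{\mathcal S}_{G,\rho}$ obtained by averaging the uniform measures $\mathbf u_r$ against a radial weight chosen so that $\phi(\{|g|>r\})\asymp 1/\rho(r)$, and then to invoke the radial stable-like spectral computations of the appendix to conclude $\Lambda_\phi(v)\gtrsim 1/\rho(1/\Lambda_G(v))$. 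The main obstacle is this last step: while the upper bound is essentially a one-shot test-function argument, pulling the profile of the constructed radial measure down to the intrinsic group invariant $\Lambda_G$ on an \emph{arbitrary} group $G$ requires combining the appendix's estimates with the comparison machinery relating $\mathcal E_\phi$ to $\mathcal E_{\mathbf u_r}$ at the critical scale $r\asymp 1/\sqrt{\Lambda_G(v)}$. The specialisation to $\rho=\log_{[k]}^\epsilon$ is then immediate, since these functions admit the required integral representation with slowly varying $\ell$ satisfying $\ell(t^a)\simeq \ell(t)$.
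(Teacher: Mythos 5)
Your upper bound argument is correct and coincides with the paper's proof of Theorem \ref{pro-compphiG} specialised to $p=2$: split $\mathcal E_\phi(f,f)$ at radius $R$, use the pseudo-Poincar\'e inequality on $\{|y|\le R\}$, use the weak-moment tail on $\{|y|>R\}$, integrate by parts, and optimise with $R=\Lambda_G(v)^{-1/2}$.

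The gap is in the lower bound, and it is the one you flag yourself. Your proposed witness is a radial measure built by averaging the uniform measures $\mathbf u_r$ on balls (the measure $\phi_\ell$ of Section 3), and you plan to get the lower bound on $\Lambda_{\phi_\ell}(v)$ from the appendix's radial stable-like estimates. But those estimates (Proposition \ref{pro-JLalpha}, Theorem \ref{th-A-pol-Lambda}) produce a bound of the form $\Lambda_{\phi_\ell}(v)\gtrsim 1/\rho(\mathcal W(v))$, where $\mathcal W$ is the inverse of the \emph{volume} function, not the inverse of $\Lambda_G$. On a general finitely generated amenable group one only has $\Lambda_G\gtrsim \mathcal W^{-2}$, and this inequality can be very far from an equivalence (wreath products and iterated wreath products are the basic counterexamples in this paper). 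When $\Lambda_G\not\simeq \mathcal W^{-2}$, the ball-averaged lower bound $1/\rho(\mathcal W(v))$ is strictly weaker than the target $1/\rho(1/\Lambda_G(v))$, so the radial witness does not close the argument. The ``comparison machinery at the critical scale $r\asymp 1/\sqrt{\Lambda_G(v)}$'' you invoke would only give upper, not lower, bounds on $\Lambda_{\phi_\ell}$.

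The paper's fix is to change the witness: instead of averaging $\mathbf u_r$ (balls), average the convolution powers $\mathbf u^{(n)}$, i.e.\ take the discretely subordinated measure $\mathbf u_\psi=\sum_n c(\psi,n)\mathbf u^{(n)}$ for the complete Bernstein function $\psi$ associated to $\rho$ via $\rho(t)\asymp 1/\psi(1/t)$. The weak-$\rho$-moment of $\mathbf u_\psi$ is established via \cite{Bendikov2012}, and the crucial lower bound $\Lambda_{\mathbf u_\psi}(v)\gtrsim \psi(\Lambda_{\mathbf u}(v))\asymp 1/\rho(1/\Lambda_G(v))$ comes from the Schilling--Wang functional inequality (Theorem \ref{th-Sch}, i.e.\ \cite[Theorem 1]{Schilling}), which controls the Nash profile of a subordinated generator by $f$ applied to the Nash profile of the original generator, together with Lemma \ref{lem-IsoNash}. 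This is precisely the mechanism that replaces the volume-function inverse $\mathcal W$ by the spectral-profile inverse, and it is what makes the two-sided bound hold on an arbitrary amenable group.
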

The second part of this theorem indicates that, 
for slowly varying functions $\rho$ of the type described above, 
$\widetilde{\boldsymbol \Lambda}_{G,\rho}$ is 
determined by $\Lambda_G$.  Given the tight connections between 
$n\mapsto\phi^{(2n)}(e)$ and $v\mapsto \Lambda_\phi(v)$, this means that
 $\Phi_G$ determines $\widetilde{\Phi}_{G,\rho}$ under some 
a priori regularity conditions on these functions. 

In Section \ref{sec-entropy}, we derive a sublinear  upper bound for 
the entropy
$H_\mu(n)=\sum_{g\in G} (-\log \mu^{(n)}(g))  \mu^{(n)}(g)$ 
when the symmetric probability measure $\mu$ has finite $p$-moment 
and under an appropriate condition
on its $L^p$-isoperimetric profile which 
implies the following interesting result regarding the entropy and the
the displacement  $L_\mu(n)=\mu^{(n)}(|\cdot|)$. Compare to 
\cite[Theorem 1.4 and Conjecture 1.5]{Gournay}.
\begin{theo}
Let $G$ be a finitely generated infinite group such that
$$
\Phi_G(n)\gtrsim \exp\left(-n^{\gamma}\right)
$$
for some $\gamma\in\left(0,\frac{1}{2}\right)$.
Let $\mu$ be a symmetric probability measure on $G$ with finite
$p$-moment where $p> 2\gamma/(1-\gamma)$ and $p>1$.
Then, for any fixed $\epsilon>0$, 
\[
H_{\mu}(n)\lesssim
\left(n(\log n)^{1+\epsilon}\right)^{\frac{2\gamma}{p(1-\gamma)}}.
\]
In particular, if $p=2$, we have $
H_{\mu}(n)\lesssim
\left(n(\log n)^{1+\epsilon}\right)^{\frac{\gamma}{(1-\gamma)}}$ 
and 
\[
L_{\mu}(n)\lesssim n^{\frac{1}{2(1-\gamma)}}(\log n)^{\frac{(1+\epsilon)\gamma}{2(1-\gamma)}}.
\]
\end{theo}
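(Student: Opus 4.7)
The plan is to combine a translation of the hypothesis on $\Phi_G$ into an isoperimetric statement with the general entropy inequality developed earlier in Section~\ref{sec-entropy}, and then deduce the displacement bound via a Carne--Varopoulos--type inequality.

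\emph{Step 1: From $\Phi_G$ to an isoperimetric bound.} The Coulhon--Grigoryan correspondence reviewed in Section~\ref{sec-PhiLambda} converts $\Phi_G(n)\gtrsim\exp(-n^{\gamma})$ into the equivalent upper bound on the spectral profile
\[
\Lambda_G(v)\lesssim (\log v)^{-(1-\gamma)/\gamma},\qquad v\geq 2,
\]
obtained by solving $\psi'(t)\simeq \psi(t)\Lambda_G(\psi(t))$ with $\psi(n)\lesssim \exp(n^\gamma)$. A Coulhon-type interpolation between $L^2$ and $L^p$ then yields a matching upper bound on $\Lambda_{G,p}$, which is the ``appropriate condition'' on the $L^p$--isoperimetric profile required to invoke the entropy estimate.

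\emph{Step 2: Entropy bound.} I feed this profile bound into the general entropy estimate of Section~\ref{sec-entropy}. That estimate controls $H_\mu(n)$ by a functional of $\Lambda_{G,p}$ evaluated at a volume scale $v_n$ chosen to balance the $p$--moment tail of $\mu$ against the Dirichlet energy. In the present logarithmic regime the balance forces $\log v_n\simeq n^{\gamma/(1-\gamma)}$ up to a logarithmic correction, and a direct computation yields
\[
H_\mu(n)\lesssim \bigl(n(\log n)^{1+\epsilon}\bigr)^{2\gamma/(p(1-\gamma))}
\]
for any $\epsilon>0$. The factor $(\log n)^{1+\epsilon}$ is the usual cost of passing from a scale-by-scale estimate to a uniform one via summation of a convergent series such as $\sum_k k^{-(1+\epsilon)}$; the condition $p>2\gamma/(1-\gamma)$ is precisely what keeps the exponent below one.

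\emph{Step 3: Displacement bound.} For $p=2$ the entropy estimate reads $H_\mu(n)\lesssim(n(\log n)^{1+\epsilon})^{\gamma/(1-\gamma)}$. Combine it with the Carne--Varopoulos--type inequality $L_\mu(n)^2\lesssim n\,H_\mu(n)$: under the finite second-moment hypothesis, the pointwise estimate $\mu^{(n)}(g)\leq C\exp(-c|g|^2/n)$ combined with Jensen's inequality gives $H_\mu(n)\gtrsim L_\mu(n)^2/n$. Substituting the entropy bound produces exactly $L_\mu(n)\lesssim n^{1/(2(1-\gamma))}(\log n)^{(1+\epsilon)\gamma/(2(1-\gamma))}$.

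The main obstacle is Step~2: identifying the correct form of the general entropy inequality from Section~\ref{sec-entropy} and selecting the cut-off scale $v_n$ so that the $L^p$--moment term and the profile term balance to produce the sharp exponent $2\gamma/(p(1-\gamma))$. The $L^2$-to-$L^p$ interpolation in Step~1 is itself delicate in the logarithmic regime and must be carried out carefully to preserve constants. Once both of these are settled, the rest of the argument is essentially bookkeeping.
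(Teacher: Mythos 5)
Your Steps~1 and~2 follow the paper's own route: Lemma~\ref{lem-phiLambda} converts $\Phi_G(n)\gtrsim\exp(-n^\gamma)$ into $\Lambda_{2,G}(v)\lesssim [\log(e+v)]^{-(1-\gamma)/\gamma}$; the $L^p$--$L^q$ Cheeger-type comparison~(\ref{Cheegerpq}) transfers this to $\Lambda_{p,G}(v)\lesssim [\log(e+v)]^{-\alpha_p}$ with $\alpha_p=2\gamma/(p(1-\gamma))$; and then Theorem~\ref{theo-entropy} yields the stated entropy bound, with $\alpha_p<1$ precisely under $p>2\gamma/(1-\gamma)$. That is exactly the argument the paper gives in the remark following Theorem~\ref{theo-entropy}. (Your heuristic description of the mechanism inside Theorem~\ref{theo-entropy} --- a single cut-off volume scale $v_n$ chosen to balance the $p$-moment tail against the Dirichlet energy --- does not match the actual proof, which builds a multi-scale cocycle into $\mathbf B_p(G)$, applies a convexity/Jensen argument, and invokes Naor--Peres; but since you use that theorem as a black box this is only a mis-description, not a gap, and your account of where the $(\log n)^{1+\epsilon}$ factor comes from, namely the summable weights $c_k^p=(1+k)^{-1-\epsilon}$, is correct.)

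Step~3 contains a genuine gap. You justify $H_\mu(n)\gtrsim L_\mu(n)^2/n$ by invoking a pointwise Carne--Varopoulos bound $\mu^{(n)}(g)\le C\exp(-c|g|^2/n)$ and claiming it holds under the finite second-moment hypothesis. That Gaussian bound requires bounded range, and it fails for unbounded-range kernels: on $\mathbb Z$ take $\mu(k)\asymp (1+|k|)^{-3-\delta}$, which has finite second moment, yet $\mu^{(n)}(k)\ge \mu(k)\,\mu^{(n-1)}(0)\gtrsim n^{-1/2}(1+|k|)^{-3-\delta}$, which dwarfs $\exp(-ck^2/n)$ once $|k|\gg\sqrt{n\log n}$. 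The inequality you actually need, $L_\mu(n)\lesssim\sqrt{n\,H_\mu(n)}$ for symmetric $\mu$ with finite second moment, is true but is proved by a different mechanism: the increment bound $|L_\mu(n+1)-L_\mu(n)|\lesssim\beta(n)$ together with $\beta(n)\lesssim\sqrt{H_\mu(n+1)-H_\mu(n)}$, followed by summation and Cauchy--Schwarz. This is \cite[Corollary 1.1]{EK}, which the paper cites for exactly this purpose, and the same argument (in a more general $p$-moment form) is reproduced at the end of the proof of Theorem~\ref{theo-entropy}. Replacing your Carne--Varopoulos justification with a citation to that result closes the gap.
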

The last conclusion follows from the entropy bound by 
\cite[Corollary 1.1]{EK} which gives the bound 
$L_\mu(n)\lesssim \sqrt{nH_\mu(n)}$ assuming that $\mu$ is symmetric and 
has finite second moment.

Section \ref{sec-lowPhi} describes applications of the spectral profile
upper bound provided by Theorem \ref{pro-compphiG} to the problem of 
bounding $\widetilde{\Phi}_{G,\rho}$ from below. The main result is Theorem 
\ref{th-rho-low} which gives sharp lower bounds on $\widetilde{\Phi}_{G,\rho}$ 
in terms of a lower bound on $\Phi_G$ for a wide variety of weak-moment 
conditions. One important feature of this result 
(which distinguished it from the results obtained in \cite{BSClmrw}) is that 
it is just as effective around the critical weak-moment condition 
of order $2$ than for power weak-moment conditions in the classical
range $(0,2)$ (stable like moment-conditions) and for moment conditions 
associated with slowly varying functions (including positive powers of any 
iterated logarithms).  Explicit statements are given in Corollaries 
\ref{cor-Pol-Phi}-\ref{cor-Good-Phi}-\ref{cor-low-Phi1}.

Section \ref{sec-wreath} is devoted to wreath products. These groups are 
important for many reasons including the fact that they provide a class 
of groups of exponential volume growth in which a rich variety of different 
behaviors  of $\Phi_G$ occurs.  Here, we provide
sharp upper bounds on $\widetilde{\Phi}_{G,\rho}$. 
More generally, we provide sharp two-sided bounds on $n\mapsto \phi^{(2n)}(e)$
for a wide variety of measures $\phi$ on wreath products and iterated 
wreath products. For instance, let $G=\mathbb Z_2\wr H$ be the  
lamplighter group with the usual binary lamps over a based group $H$ which 
has polynomial volume growth of degree $D$. In this simple case, we obtain the 
following estimates 
$$\widetilde{\Phi}_{G,\rho_2}(n)\simeq \exp\left(- (n\log n )^{d/(d+2)}\right),$$
$$\widetilde{\Phi}_{G,\rho_\alpha}(n)\simeq 
\exp\left(- n^{d/(d+\alpha)}\right),\;\;\alpha\in (0,2)$$
$$\widetilde{\Phi}_{G,\log_{[k]}^\epsilon}(n)\simeq 
\exp\left(- n/\log^\epsilon_{[k]}(n)\right),\;\;k=1,2\dots, \epsilon>0.$$
The first and last estimates appear to be new even for $H=\mathbb Z$. When 
$H=\mathbb Z^d$, the second estimate can be derived from the celebrated 
Donsker-Varadhan large deviation theorem on the number of visited point 
by a random walk that belongs to the domain of attraction of a stable law.
These results follow from the techniques developed in Section 
\ref{sec-wreath} and are part of a large collection of illustrative examples 
described in Section \ref{sec-wreathexa}.

A key result concerning wreath products is 
Erschler's isoperimetric inequality \cite{Erschleriso,Erschler2006} which gives 
a lower bound on the F\o lner function of $G=H_1\wr H_2$ in terms of the 
F\o lner functions of of $H_1$ and $H_2$. Here we use Erschler's inequality 
and a new comparison idea to obtain the following  $L^2$-version where
$\Lambda^{-1}_{H,\mu}$ denotes the right-continuous  inverse of the 
$L^2$-isoperimetric profile $\Lambda_{H,\mu}$.  
\begin{theo}\label{th-WR}
There exists a constant $K>0$ such that, for any 
symmetric probability measures $\mu_{H_1}$ and $\mu_{H_2}$  
on two finitely generated groups $H_1$ and $H_2$, 
the switch or walk measure  
$q=\frac{1}{2}(\mu_{H_1}+\mu_{H_2})$ on the wreath product $H_1\wr H_2$ satisfies 
\[
\Lambda_{H_1\wr H_2,q}(v)\ge s/K\ \mbox{for any }v\le\Lambda_{H_1,\mu_{H_1}}^{-1}(s)^{\Lambda_{H_2,\mu_{2}}^{-1}(s)/K}.
\]
In the other direction, 
\[
\Lambda_{H_1\wr H_2,q}(v)\le s\ \mbox{for }v\ge\Lambda_{H_1,\mu_{H_1}}^{-1}(s)^{\Lambda_{H_2,\mu_{2}}^{-1}(s)}\Lambda_{H_2,\mu_{2}}^{-1}(s).
\]
\end{theo}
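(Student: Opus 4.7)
The two inequalities go in opposite directions, and I plan to handle them separately via the splitting $\mathcal{E}_q=\tfrac12(\mathcal{E}_{\mathrm{lamp}}+\mathcal{E}_{\mathrm{walk}})$. Here, for a function $F:H_1\wr H_2\to\mathbb R$, the walk part $\mathcal{E}_{\mathrm{walk}}(F,F)=\sum_f\mathcal{E}_{\mu_{H_2}}(F(f,\cdot),F(f,\cdot))$ sums over frozen lamp configurations an $\mu_{H_2}$-Dirichlet form in the walker variable, while the lamp part $\mathcal{E}_{\mathrm{lamp}}(F,F)$ sums over walker positions $y$ and off-$y$ lamp values the $\mu_{H_1}$-Dirichlet form acting on the single lamp $f(y)$.

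For the upper bound my plan is to construct an explicit tensor-product test function. Given $s>0$, I would pick $\Omega_i\subset H_i$ realizing $|\Omega_i|=\Lambda_{H_i,\mu_{H_i}}^{-1}(s)$ with positive Dirichlet eigenfunction $\phi_i$ satisfying $\mathcal{E}_{\mu_{H_i}}(\phi_i,\phi_i)\le s\|\phi_i\|_2^2$, and set
\[
F(f,y)=\phi_2(y)\prod_{z\in\Omega_2}\phi_1(f(z))\cdot\mathbf 1\{f|_{H_2\setminus\Omega_2}\equiv e\},
\]
which is supported on a ``trap'' of size $|\Omega_1|^{|\Omega_2|}|\Omega_2|$. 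The product structure gives $\|F\|_2^2=\|\phi_2\|_2^2\|\phi_1\|_2^{2|\Omega_2|}$; distributing each piece of the Dirichlet form site-by-site and invoking the eigenfunction property yields both $\mathcal{E}_{\mathrm{lamp}}(F,F)\le s\|F\|_2^2$ and $\mathcal{E}_{\mathrm{walk}}(F,F)\le s\|F\|_2^2$, hence $\lambda_q(\mathrm{supp}\,F)\le s$, as required.

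The lower bound is the substantial direction. Its starting ingredient is Erschler's combinatorial isoperimetric inequality, which controls the F\o lner function of $H_1\wr H_2$ by those of the factors. The plan is to apply a co-area decomposition to a candidate extremal $F\ge 0$ on $\Omega$, using the walk part of $\mathcal{E}_q$ to bound $\Lambda_{H_2,\mu_{H_2}}$ applied to the walker projections of the level sets, and using the lamp part to bound $\Lambda_{H_1,\mu_{H_1}}$ applied site-by-site to the lamp slices. Erschler's counting argument, adapted to this $L^2$ setting, then combines these two bounds: if $\lambda_q(\Omega)<s/K$, the walker projections must cover at least $\Lambda_{H_2,\mu_{H_2}}^{-1}(s)/K$ sites in $H_2$, and over each such site the lamp admits at least $\Lambda_{H_1,\mu_{H_1}}^{-1}(s)$ distinct values, forcing $|\Omega|\ge\Lambda_{H_1,\mu_{H_1}}^{-1}(s)^{\Lambda_{H_2,\mu_{H_2}}^{-1}(s)/K}$.

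The main obstacle, and where the new comparison idea is essential, is to avoid the standard $L^1$-to-$L^2$ Cheeger loss. Erschler's original proof is combinatorial and naturally produces F\o lner inverses $F_{H_i}(1/n)$; translating these into spectral statements through a naive Cheeger inequality would replace $\Lambda_{H_i,\mu_{H_i}}^{-1}(s)$ by $F_{H_i}(1/\sqrt{s})$ and distort both the base and the exponent of the tower, falling short of the sharp bound stated. The key technical step I expect will be to rerun Erschler's counting argument with $L^2$ spectral inputs on the factors directly at each level of $F$, so that the correct $\Lambda$-inverses appear without any squaring of the spectral scale; the resulting estimate then inverts to the claimed statement for $\Lambda_{H_1\wr H_2,q}$.
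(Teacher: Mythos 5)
Your upper bound is correct and matches the paper: the tensor-product test function
\[
F(f,y)=\phi_2(y)\prod_{z\in\Omega_2}\phi_1(f(z))\,\mathbf 1\{f|_{H_2\setminus\Omega_2}\equiv e\}
\]
is exactly the construction used in the proof of Theorem \ref{theo-wreathlow}, and distributing the two halves of $\mathcal E_q$ over the factors gives $\lambda_q(\mathrm{supp}\,F)\le s$ on a set of size $|\Omega_1|^{|\Omega_2|}|\Omega_2|$.

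The lower bound, however, has a genuine gap. You correctly identify the obstacle — a naive application of Cheeger to Erschler's $L^1$ bound squares the spectral scale $s$ — but your proposed remedy, to ``rerun Erschler's counting argument with $L^2$ spectral inputs on the factors directly at each level of $F$,'' is not a proof and is not what the argument in the paper does. Erschler's inequality is intrinsically an $L^1$ (F\o lner/boundary) statement; there is no known site-by-site $L^2$ version of her combinatorial scheme, and the co-area/level-set decomposition you sketch does not by itself reassemble into a spectral bound without passing through $L^1$. The actual resolution in the paper is a \emph{comparison} trick, not a rewrite of Erschler's argument. Given $s$, one replaces each $\mu_i$ by a ``spread-out'' comparison measure $\zeta_{i,v_i}$ at scale $v_i=\Lambda_{2,H_i,\mu_i}^{-1}(s)$, built by discrete subordination with a localized Bernstein function (Theorem \ref{th-subtrick}), so that simultaneously $\Lambda_{1,\zeta_{i,v_i}}(v_i)\ge \tfrac12$ (a universal constant, independent of $s$) and $\mathcal E_{\mu_i}\ge c\,\Lambda_{2,\mu_i}(8v_i)\,\mathcal E_{\zeta_{i,v_i}}$. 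Erschler's $L^1$ theorem is then applied as a black box to $\zeta=\tfrac12(\zeta_1+\zeta_2)$ at the scale where the $L^1$ profile is already $\ge\delta$, so Cheeger only costs the universal constant $\delta^2/2$ rather than $s^2$. Finally, the Dirichlet-form comparison $\mathcal E_\mu\ge cs\,\mathcal E_\zeta$ transfers the universal bound linearly in $s$, giving $\Lambda_{2,H_1\wr H_2,q}(v)\ge cs$ on the claimed volume range. This subordination step is the new idea your sketch is missing; without it the lower bound does not close.
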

The only case where this result is far from sharp is when either 
$H_1=\{e\}$ is trivial or $H_2$ is finite. In those cases, 
it is a simple matter to obtain the desired sharp results by different 
arguments.  Because of the detailed relations between 
the $L^2$-isoperimetric profile $\Lambda_\phi$ and the behavior of 
$n\mapsto \phi^{(2n)}(e)$ (see Section \ref{sec-PhiLambda}), 
the above theorem typically yields good bounds on $q^{(2n)}(e)$
in terms of bounds on $n\mapsto \mu_1^{(2n)}(e)$ and $n\mapsto \mu_2^{(2n)}(e)$.

\section{The isoperimetric functions $\Lambda_{p,\phi}, p\ge 1$}
\setcounter{equation}{0}
\subsection{$\Phi$, $\Lambda$ and the Nash profile}\label{sec-PhiLambda}

In this section, we quickly review Coulhon's results from \cite{CNash}
which, in the present context, relate the behavior of 
$n\mapsto \phi^{(2n)}(e)$ to that of the spectral profile
$v\mapsto \Lambda_\phi(v)$. We refer the reader to \cite{CNash} for 
references to earlier related works, in particular, work by Grigor'yan 
in which the spectral profile play a key role.

It is convenient to introduce the notion of Nash profile.
Namely, define the Nash profile  $\mathcal N_A$ of a 
symmetric Markov generator $A$ with associated Dirichlet form $\mathcal E_A$ 
by
$$\mathcal N_A(t)=\sup\left\{\frac{\mathcal E_A(f,f)}{\|f\|_2^2}: 
f\in \mbox{Dom}(\mathcal E_A) 
\mbox{ with } 0<\|f\|^2_1\le t\|f\|^2_2\right\}$$
so that, for all $f$ in the domain of the Dirichlet form $\mathcal E_A$,
$$\|f\|_2^2\le \mathcal N_A(\|f\|_1^2/\|f\|_2^2) \mathcal E_A(f,f).$$
For our purpose, we can restrict ourselves 
to the case when $A$ is convolution by 
$\delta_e-\phi$, for some symmetric probability measure $\phi$ on $G$. 
In this case, with some abuse of notation, $\mathcal E_A=\mathcal E_\phi$,
$\mbox{Dom}(\mathcal E_A)=L^2(G)$ and we will write 
${\mathcal N}_\phi$ for the Nash profile of
$A =\cdot*(\delta_e-\phi)$. 
The following lemma relates the $L^2$-isoperimetric profile and 
the Nash Profile. 
\begin{lem}[Folklore] \label{lem-IsoNash}
For any symmetric probability measure $\phi$ on a (countable) group $G$, 
we have
$$\forall\, v>0,\;\;  \frac{1}{\Lambda_\phi( v)} \le  
{\mathcal N}_\phi(v) \le \frac{2}{\Lambda_\phi (4v)}.$$
\end{lem}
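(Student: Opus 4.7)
The proof splits into two halves joined only by the reduction to $f \ge 0$: replacing $f$ by $|f|$ preserves all $L^p$ norms and, since $s \mapsto |s|$ is $1$-Lipschitz, cannot increase $\mathcal E_\phi(f,f)$. I therefore assume $f \ge 0$ throughout.

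The left inequality $1/\Lambda_\phi(v) \le \mathcal N_\phi(v)$ is a one-line consequence of Cauchy--Schwarz. If $f$ is supported on $\Omega$ with $|\Omega| \le v$, then $\|f\|_1^2 \le |\Omega|\,\|f\|_2^2 \le v\|f\|_2^2$, so $f$ is admissible in the supremum defining $\mathcal N_\phi(v)$, giving $\|f\|_2^2 \le \mathcal N_\phi(v)\,\mathcal E_\phi(f,f)$. Taking the infimum over unit-$L^2$-norm $f$ supported in $\Omega$ shows $\lambda_\phi(\Omega) \ge 1/\mathcal N_\phi(v)$; then take the infimum over such $\Omega$.

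The right inequality $\mathcal N_\phi(v) \le 2/\Lambda_\phi(4v)$ is proved by truncation. For $f \ge 0$ with $\|f\|_1^2 \le v\|f\|_2^2$ and a height $t > 0$ to be chosen, set $g = (f-t)_+$, supported on $B_t = \{f > t\}$. Two preliminary facts are: (i) the $1$-Lipschitz contraction $s \mapsto (s-t)_+$ yields $\mathcal E_\phi(g,g) \le \mathcal E_\phi(f,f)$; (ii) Markov's inequality yields $|B_t| \le \|f\|_1/t$. The pointwise expansion $f^2 = (f \wedge t)^2 + 2t(f-t)_+ + (f-t)_+^2$ summed over $G$, combined with the exact identities $\|f \wedge t\|_2^2 = \|f\mathbf 1_{f \le t}\|_2^2 + t^2|B_t|$ and $\|g\|_1 = \|f\mathbf 1_{B_t}\|_1 - t|B_t|$, produces after cancellation of the $t^2|B_t|$ contributions the clean estimate
\[
\|g\|_2^2 \;\ge\; \|f\|_2^2 - t\|f\|_1.
\]
Now choose $t$ so that simultaneously $|B_t| \le 4v$ and $t\|f\|_1 \le \tfrac12\|f\|_2^2$; the hypothesis $\|f\|_1^2 \le v\|f\|_2^2$ guarantees compatibility (for instance, $t = \|f\|_1/(2v)$ works). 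Then $\|g\|_2^2 \ge \tfrac12\|f\|_2^2$, and chaining
\[
\mathcal E_\phi(f,f) \;\ge\; \mathcal E_\phi(g,g) \;\ge\; \lambda_\phi(B_t)\,\|g\|_2^2 \;\ge\; \tfrac12 \Lambda_\phi(4v)\,\|f\|_2^2
\]
rearranges to $\|f\|_2^2/\mathcal E_\phi(f,f) \le 2/\Lambda_\phi(4v)$.

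The only delicate point I anticipate is extracting the sharp bound $\|g\|_2^2 \ge \|f\|_2^2 - t\|f\|_1$; a cruder triangle-style estimate produces only $\|f\|_2^2 - Ct\|f\|_1$ with $C > 1$, and that would make the double requirement $|B_t| \le 4v$ and $t\|f\|_1 \le \tfrac12\|f\|_2^2$ incompatible with the hypothesis $\|f\|_1^2 \le v\|f\|_2^2$. Tracking the exact signs of the two $t^2|B_t|$ contributions (one from expanding $\|f \wedge t\|_2^2$ on $B_t$, one from rewriting $2t\|g\|_1$) is precisely what makes the stated numerical constants work out.
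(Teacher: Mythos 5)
Your proof of the left inequality is correct and matches the paper's: the support constraint $|\Omega|\le v$ together with Cauchy--Schwarz shows every admissible competitor for $1/\Lambda_\phi(v)$ is admissible for $\mathcal N_\phi(v)$. The right inequality follows the paper's overall route (truncate to $g=(f-t)_+$, use the Markov bound on $|B_t|$ and the contraction $\mathcal E_\phi(g,g)\le\mathcal E_\phi(f,f)$, then chain through $\Lambda_\phi(|B_t|)$), but the key pointwise estimate you claim is false. The inequality
$$\|(f-t)_+\|_2^2 \ge \|f\|_2^2 - t\|f\|_1$$
fails already for $f=2t\,\mathbf 1_{x_0}$: the left side is $t^2$, the right side is $4t^2-2t^2=2t^2$. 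Your derivation via the exact expansion $f^2=(f\wedge t)^2+2t(f-t)_+ +(f-t)_+^2$ does not ``cancel'' as you assert. Writing out the substitution, one gets
$$\|g\|_2^2 - \bigl(\|f\|_2^2 - t\|f\|_1\bigr) = \sum_{f\le t}f(t-f)\;-\;t\sum_{f>t}(f-t),$$
and the second sum can dominate.

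The correct bound, used in the paper, carries an extra factor of $2$: from the pointwise inequality $f^2 \le (f-t)_+^2 + 2tf$ (valid for $f\ge 0$, $t\ge 0$; the difference equals $f(2t-f)\ge 0$ on $\{f\le t\}$ and $t^2$ on $\{f>t\}$) one gets $\|g\|_2^2 \ge \|f\|_2^2 - 2t\|f\|_1$. Your final remark asserts that any constant $C>1$ here is ``incompatible'' with the target constants; that is also mistaken. With $C=2$ choose $t=\|f\|_2^2/(4\|f\|_1)$: then $2t\|f\|_1=\tfrac12\|f\|_2^2$ so $\|g\|_2^2\ge\tfrac12\|f\|_2^2$, while Markov gives $|B_t|\le\|f\|_1/t=4\|f\|_1^2/\|f\|_2^2\le 4v$. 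This is exactly the choice the paper makes and yields $\mathcal E_\phi(f,f)\ge\tfrac12\Lambda_\phi(4v)\|f\|_2^2$, hence $\mathcal N_\phi(v)\le 2/\Lambda_\phi(4v)$. So the gap is recoverable, but the sharp constant-$1$ estimate you built the argument around does not hold, and the numerology works with $C=2$ once $t$ is tuned to $\|f\|_2^2/(4\|f\|_1)$ rather than $\|f\|_1/(2v)$.
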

\begin{proof} For any finite set $\Omega$ and any function $f$ 
with support in $\Omega$, $\|f\|_1^2\le |\Omega|\|f\|_2^2$. 
Hence,the lower bound on $\mathcal N_\phi$ follows easily from the definitions
of $\mathcal N_\phi$ and $\Lambda_\phi$. 
Conversely, the definition of $\Lambda_\phi$ gives
$$\|f\|_2^2\le \Lambda_\phi(|\mbox{support}(f)|)^{-1}\mathcal E_\phi(f,f).$$
For any $t\ge 0$, set $f_t=\max\{f-t,0\}$ and observe that, for any 
non-negative $f$, $|f|^2\le (f_t)^2+ 2t f$ and 
$\mathcal E(f_t,f_t)\le \mathcal E(f,f)$. 
It follows that, for any $t$ and $f\ge 0$,
$$\|f\|_2^2\le \Lambda_\phi(|\{f\ge t\}|)^{-1} \mathcal E(f,f)+2t\|f\|_1.$$
Picking $t$ such that $4t=\|f\|_2^2/\|f\|_1$ and using 
$|\{f\ge t\}|\le t^{-1}\|f\|_1$, we obtain
$$\|f\|_2^2\le 2\Lambda_\phi( 4\|f\|_1^2/\|f\|_2^2)^{-1}\mathcal E(f,f).$$
The upper bound on $\mathcal N_\phi$ follows. 
\end{proof}

\begin{theo}[Essentially, {\cite[Proposition II.1]{CNash}}] 
\label{th-Coulhon1}
We have
$$\phi^{(2n+2)}(e) \le  2\psi(2n)$$
where 
$\psi: [0,+\infty)\ra [1,+\infty)$ is defined implicitly by
$$t=\int_1^{1/\psi(t)}\frac{ds}{2s\Lambda_\phi(4s)}.$$
\end{theo}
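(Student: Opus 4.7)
The plan is to adapt Coulhon's continuous-time Nash-type argument (Proposition II.1 of \cite{CNash}) to the present discrete-time setting. Set $u_n := \phi^{(2n)}(e) = \|\phi^{(n)}\|_2^2$ (the last equality by symmetry of $\phi$); the sequence $(u_n)$ is non-increasing. The Nash inequality of Section~2 applied to the probability measure $\phi^{(n)}$ (so $\|\phi^{(n)}\|_1 = 1$ and $\|\phi^{(n)}\|_2^2 = u_n$) gives
\[
u_n \le \mathcal N_\phi(1/u_n)\,\mathcal E_\phi(\phi^{(n)}, \phi^{(n)}).
\]
Since $\mathcal E_\phi(\phi^{(n)}, \phi^{(n)}) = u_n - \phi^{(2n+1)}(e)$ and Cauchy--Schwarz yields $\phi^{(2n+1)}(e) = \langle \phi\ast \phi^{(n)}, \phi^{(n)}\rangle \le \sqrt{u_n u_{n+1}}$, one obtains $\mathcal E_\phi(\phi^{(n)}, \phi^{(n)}) \ge \sqrt{u_n}(\sqrt{u_n} - \sqrt{u_{n+1}})$. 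Combining the two inequalities produces the discrete differential inequality
\[
\sqrt{u_n} - \sqrt{u_{n+1}} \ge \frac{\sqrt{u_n}}{\mathcal N_\phi(1/u_n)}, \qquad \text{i.e.,} \qquad \log\frac{u_n}{u_{n+1}} \ge \frac{2}{\mathcal N_\phi(1/u_n)}.
\]

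Next I would convert this to an integral estimate by comparison with $\int\mathcal N_\phi(s)/s\,ds$. Since $\mathcal N_\phi$ is non-decreasing, for each $n$,
\[
\int_{1/u_n}^{1/u_{n+1}} \frac{\mathcal N_\phi(s)}{s}\,ds \ge \mathcal N_\phi(1/u_n)\log\frac{u_n}{u_{n+1}} \ge 2,
\]
so that telescoping from $n=0$ yields $\int_1^{1/u_{n+1}} \mathcal N_\phi(s)/s\,ds \ge 2n$. Finally, the upper bound $\mathcal N_\phi(s) \le 2/\Lambda_\phi(4s)$ from Lemma~\ref{lem-IsoNash} rewrites this as
\[
\int_1^{1/u_{n+1}} \frac{ds}{2s\,\Lambda_\phi(4s)} \gtrsim n,
\]
which by the defining equation of $\psi$ gives $u_{n+1} = \phi^{(2n+2)}(e) \lesssim \psi(cn)$ for some constant $c$; tracking the constants carefully (see below) sharpens this to the form $\phi^{(2n+2)}(e) \le 2\psi(2n)$ asserted in the theorem.

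The main obstacle I anticipate is the careful book-keeping of constants to obtain exactly the asserted form $2\psi(2n)$, especially when $\psi$ decays rapidly (so that the precise location of the argument of $\psi$ is essential). Two points of delicacy enter here: first, when the spectrum of the convolution operator by $\phi$ has a substantial negative part, the cleaner identity $\phi^{(2n+1)}(e) \ge u_{n+1}$ fails, so one must either use the $\sqrt{u_n u_{n+1}}$-bound above or pass to the lazy walk $\tfrac12(\delta_e+\phi)$ (which naturally accounts for the overall prefactor $2$ in the statement); second, the factor of $4$ inside $\Lambda_\phi(4s)$ coming from Lemma~\ref{lem-IsoNash} must be propagated consistently through the integration and the argument of $\psi$ to produce the sharper claimed form rather than a weaker $\psi(cn)$ estimate.
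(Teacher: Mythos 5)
Your proposal is essentially correct, but it follows a genuinely different route from the paper. The paper proves the theorem by passing to the continuous-time semigroup $h^\phi_t$ (see (\ref{cont-time})), invoking the bound $\phi^{(2n+2)}(e)\le 2h^\phi_{2n}(e)$ from \cite{PSCstab}, and then running Coulhon's continuous-time Nash argument \cite[Proposition II.1]{CNash} on $t\mapsto h^\phi_t(e)$ using the Nash inequality of Lemma~\ref{lem-IsoNash}. You instead stay entirely in discrete time: you work directly with $u_n=\phi^{(2n)}(e)=\|\phi^{(n)}\|_2^2$, derive a discrete differential inequality from the Nash inequality applied to $\phi^{(n)}$, and handle the lack of nonnegativity of the discrete spectrum via the Cauchy--Schwarz estimate $\phi^{(2n+1)}(e)\le\sqrt{u_n u_{n+1}}$ in place of the continuous-time derivative identity. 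This is a nice self-contained alternative that avoids introducing $h^\phi_t$ at all; the trade-off is that the discrete step-by-step argument is slightly noisier on constants than the clean ODE comparison.

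Two small caveats. First, a minor indexing slip: telescoping over $k=0,\dots,n$ gives $\int_1^{1/u_{n+1}}\mathcal N_\phi(s)\,ds/s\ge 2(n+1)$, not $\ge 2n$; harmless. Second, and more substantively: you assert at the end that ``tracking the constants carefully (see below) sharpens this to the form $\phi^{(2n+2)}(e)\le 2\psi(2n)$,'' but you do not actually carry out that sharpening. What your argument delivers, after substituting $\mathcal N_\phi(s)\le 2/\Lambda_\phi(4s)$ and comparing with the defining relation of $\psi$, is $\phi^{(2n+2)}(e)\le\psi\big((n+1)/2\big)$, which, since $\psi$ is decreasing, is a \emph{weaker} bound than $2\psi(2n)$ when $\psi$ decays quickly. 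You should be aware, however, that the paper's own proof (with the conventions of Lemma~\ref{lem-IsoNash} as stated) also only yields $h^\phi_t(e)\le\psi(t/4)$, hence $\phi^{(2n+2)}(e)\le 2\psi(n/2)$, so the precise constants $2\psi(2n)$ in the statement are not literally reproduced by either argument. The theorem is flagged ``Essentially'' and is only ever used through the equivalence $\simeq$, so this discrepancy is inconsequential; but you should not claim to recover the literal constants when you have not, and indeed cannot with this chain of inequalities. Replacing the final claim with an honest ``$\phi^{(2n+2)}(e)\le\psi(cn)$ for an explicit $c>0$, which suffices for all applications up to $\simeq$'' would make the write-up accurate.
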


\begin{proof} It is convenient to observe that
$$\phi^{(2n+2)}(e)\le 2 h^\phi_{2n}(e) \mbox{ and } 
h^\phi_{4n}(e)\le e^{-2n}+\phi^{(2n)}(e)$$
where 
\begin{equation}\label{cont-time}
h^\phi_t=
e^{-t}\sum_0^\infty\frac{t^k}{k!}\phi^{(k)}.\end{equation} 
See, e.g., \cite[Section 3.2]{PSCstab}. Convolution by $h^\phi_t$ 
defines the continuous time semigroup associated with 
the continuous time random walk driven by $\phi$.  
Lemma \ref{lem-IsoNash} gives us the Nash inequality
$$\|f\|_2^2\le 2\Lambda_\phi( 4\|f\|_1^2/\|f\|_2^2)^{-1}\mathcal E(f,f).$$
Using this inequality in the Proof 
of \cite[Proposition II.1]{CNash} gives
$h^\phi_t(e)\le  \psi(t).$
\end{proof}

The following is a sort of converse of Theorem \ref{th-Coulhon1}.
\begin{theo}[{\cite[Proposition II.2]{CNash}}] \label{th-Coulhon2}
For $v\ge 1$,
$$\Lambda_\phi(v) \ge 
\sup_{t>0}\left\{ \frac{1}{2t}\log\frac{1}{vh^\phi_t(e)}\right\}.$$
\end{theo}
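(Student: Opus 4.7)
The plan is to fix a finite subset $\Omega \subset G$ with $|\Omega|\le v$, extract the principal eigenfunction of the Dirichlet problem on $\Omega$, and compare the killed semigroup with the free continuous-time semigroup. More precisely, write $A = I - \phi*$ acting on $L^2(G)$, let $A_\Omega$ denote its restriction to functions supported in $\Omega$, and let $P^\phi_t = e^{-tA}$ be the free continuous-time semigroup whose convolution kernel is $h^\phi_t$. Because $A_\Omega$ is a finite, symmetric, non-negative operator, Perron--Frobenius yields a non-negative eigenfunction $u$ with $\mathrm{supp}(u)\subset \Omega$, $\|u\|_2=1$, and $A_\Omega u = \lambda u$ where $\lambda = \lambda_\phi(\Omega)$. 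Equivalently, for the killed semigroup $P^{\phi,\Omega}_t = e^{-tA_\Omega}$, one has $P^{\phi,\Omega}_t u = e^{-t\lambda} u$. Once a bound on $\lambda$ is in hand, we take the infimum over admissible $\Omega$ and the supremum over $t$.

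The two key inputs are, first, the pointwise domination $P^{\phi,\Omega}_t f\le P^\phi_t f$ valid for $f\ge 0$ (a standard Duhamel or probabilistic fact: killing can only remove mass from a non-negative semigroup); and second, the fact that $h^\phi_t$ attains its maximum at $e$, which follows from $h^\phi_t = h^\phi_{t/2}*h^\phi_{t/2}$, the symmetry $h^\phi_{t/2}(y^{-1})=h^\phi_{t/2}(y)$, and the Cauchy--Schwarz inequality. Combining these, for every $x\in G$,
\[
e^{-t\lambda}u(x) = P^{\phi,\Omega}_t u(x) \le (h^\phi_t * u)(x) \le h^\phi_t(e)\,\|u\|_1,
\]
so that $\|u\|_\infty \le e^{t\lambda} h^\phi_t(e)\,\|u\|_1$.

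The endgame is a simple interpolation. From $1=\|u\|_2^2\le \|u\|_1\|u\|_\infty$ and the Cauchy--Schwarz bound $\|u\|_1^2 \le |\Omega|\,\|u\|_2^2 = |\Omega|\le v$ (valid because $u$ is supported on $\Omega$), one gets $1\le v\,e^{t\lambda} h^\phi_t(e)$, equivalently
\[
\lambda \;\ge\; \frac{1}{t}\log\frac{1}{v\,h^\phi_t(e)},
\]
which is in fact stronger than, and therefore implies, the stated bound $\lambda \ge (2t)^{-1}\log(1/(v h^\phi_t(e)))$ (the assertion is of course vacuous when $v h^\phi_t(e)\ge 1$). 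Taking the infimum over $\Omega$ with $|\Omega|\le v$ and then the supremum over $t>0$ yields the theorem.

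The main points to handle carefully are: (a) the pointwise semigroup domination $P^{\phi,\Omega}_t\le P^\phi_t$, best justified via the Duhamel expansion of the killed semigroup or the Feynman--Kac representation; and (b) the reduction of the infimum defining $\Lambda_\phi(v)$ to finite $\Omega$, which is immediate from the monotonicity of $\lambda_\phi$ with respect to $\Omega$ and ensures that the principal Dirichlet eigenfunction used above really exists. Neither step is delicate, so there is no substantial obstacle; the heart of the argument is the single pointwise inequality relating $P^{\phi,\Omega}_t u$ to the convolution kernel at the origin.
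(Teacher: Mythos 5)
Your argument is correct, and it is essentially the standard Dirichlet--eigenfunction proof behind Coulhon's Proposition II.2, which the paper cites without reproducing: dominate the killed semigroup by the free one, use that $h^\phi_t$ is maximized at $e$, and close with $\|u\|_2^2\le\|u\|_1\|u\|_\infty$ together with $\|u\|_1^2\le|\Omega|$. As you point out, the chain $e^{-t\lambda}\le v\,h^\phi_t(e)$ in fact yields the coefficient $1/t$ rather than the stated $1/(2t)$, so your conclusion is (harmlessly) a bit stronger than what the paper records.
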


\begin{rem}Assume that
$\psi$ is a continuous decreasing function with continuous derivative 
with the property that
there exists $\epsilon>$ such that for all $t>0$ and all $s\in (t,2t)$
we have  $$\frac{-\psi'(s)}{\psi(s)}\ge \epsilon 
\frac{\psi'(t)}{\psi(t)}.$$
As noted in \cite{CNash} and elsewhere, under this condition
the functions $$x\mapsto \Lambda(x)=
\sup_{t>0}\left\{ \frac{1}{2t}\log\frac{1}{x\psi(t)}\right\} \mbox{ and } 
x\mapsto - \frac{1}{x} \psi'\circ \psi^{-1}(x)$$ are $\simeq$-equivalent.
Hence, under this regularity condition on $\psi$, 
$ n\mapsto \phi^{(2n)}(e)\lesssim \psi $ is equivalent to 
$\Lambda\lesssim \Lambda_\phi.$
\end{rem}

The following lemma will be useful later.

\begin{lem} \label{lem-phiLambda}
Assume that $\phi^{(n)}(e)\ge \exp(-n/\pi(n))$ where 
$\pi:(0,\infty)\ra (0,\infty)$ is an increasing function satisfying 
$\pi(t)\le ct$.  Then there exists $A$ such that for all $n$ we have
$$\Lambda_\phi(\exp(An/\pi(n))\le \frac{A}{2\pi(n)}.$$
\end{lem}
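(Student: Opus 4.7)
The plan is to argue by contradiction using Theorem~\ref{th-Coulhon1}. Suppose, for some (WLOG even) $n$, one has $\Lambda_\phi(v) > \lambda$, where $v:=\exp(An/\pi(n))$ and $\lambda:=A/(2\pi(n))$. By monotonicity of $\Lambda_\phi$ this forces $\Lambda_\phi(4s) > \lambda$ for every $s\leq v/4$. Substituting into the implicit relation $t=\int_1^{1/\psi(t)}\frac{ds}{2s\Lambda_\phi(4s)}$ of Theorem~\ref{th-Coulhon1} yields $t\leq\frac{1}{2\lambda}\log(1/\psi(t))$, and hence $\psi(t)\leq e^{-2\lambda t}$ on the whole range $t\leq t^*:=\log(v/4)/(2\lambda) = n - \pi(n)\log 4/A$.

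Next, I choose the largest even integer $2m\leq \min(t^*,\,n-2)$. Theorem~\ref{th-Coulhon1} then yields $\phi^{(2m+2)}(e)\leq 2\psi(2m)\leq 2 e^{-4\lambda m}$, and a short case analysis (depending on whether $t^*\geq n-2$ or $t^*<n-2$) shows $4\lambda m\geq An/\pi(n)-\log 4-2A/\pi(n)$ in both situations. On the other hand, since $\phi$ is a symmetric probability, Young's inequality gives $\|\phi^{(k+1)}\|_2\leq \|\phi\|_1\|\phi^{(k)}\|_2$, so the sequence $k\mapsto \phi^{(2k)}(e)=\|\phi^{(k)}\|_2^2$ is non-increasing; combined with $2m+2\leq n$ and the hypothesis this gives $\phi^{(2m+2)}(e)\geq \phi^{(n)}(e)\geq \exp(-n/\pi(n))$. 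Matching the two bounds and rearranging yields
\[
(A-1)\,\frac{n}{\pi(n)}\;\leq\;\log 8\;+\;\frac{2A}{\pi(n)}.
\]
Since the hypothesis $\pi(t)\leq ct$ forces $n/\pi(n)\geq 1/c$, choosing $A$ sufficiently large (depending only on $c$ and on a lower bound for $\pi(n)$ valid for $n$ past some threshold) makes this inequality fail, which is the desired contradiction for all large $n$. For the remaining bounded initial range of $n$, the conclusion is enforced directly from the trivial bound $\Lambda_\phi(v)\leq 1-\phi(e)$ by enlarging $A$ further so that $A/(2\pi(n))\geq 1$ there.

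The main obstacle I expect is the case analysis yielding a clean uniform lower bound on $4\lambda m$ and the simultaneous treatment of both large and small $n$ (where $\pi(n)$ could a priori be very small, making $2A/\pi(n)$ inconveniently large). The ingredient that makes the argument robust and frees it from any additional regularity hypothesis on $\pi$ is the elementary monotonicity $\phi^{(2k+2)}(e)\leq\phi^{(2k)}(e)$: it allows us to replace the hypothesis lower bound $\exp(-(2m+2)/\pi(2m+2))$ at the intermediate step $2m+2$ by the single uniform lower bound $\exp(-n/\pi(n))$, no matter how erratically $\pi$ behaves between $2m+2$ and $n$.
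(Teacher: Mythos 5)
Your proof is correct and follows essentially the same route as the paper's: both rest on the implicit relation for $\psi$ in Theorem~\ref{th-Coulhon1}, the monotonicity of $\Lambda_\phi$, and the lower bound $\psi(2n)\ge\frac{1}{2}\phi^{(2n+2)}(e)\ge\frac{1}{2}\exp(-Cn/\pi(n))$ derived from the hypothesis. The paper presents this directly, rewriting the implicit formula as $\Lambda_\phi(1/\psi(t))\le\log(1/\psi(t))/(2t)$ and substituting $t=n$, while you argue by contradiction; the content of the inequalities is the same. One small genuine difference is your use of the elementary decay $\phi^{(2k+2)}(e)\le\phi^{(2k)}(e)$ to transfer the hypothesis from time $n$ to the intermediate time $2m+2$, where the paper instead leans (implicitly, after clearing the factor of two in $2n+2$) on the monotonicity of $\pi$; both devices serve only to absorb bookkeeping into the constant $A$, and your treatment of the small-$n$ regime and the case analysis on $t^*$ makes explicit some details the paper glosses over.
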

\begin{proof} 
Let $\psi$ be defined in terms of $\Lambda_\phi$ as in Theorem 
\ref{th-Coulhon1}. By definition and since $\Lambda_\phi$ is a 
non-increasing function, we have
$$t\le \frac{\log(1/\psi(t))}{2\Lambda(1/\psi(t))}$$
which we rewrite as
$$\Lambda_\phi(1/\psi(t))\le \frac{\log(1/\psi(t))}{2t}.$$
By Theorem \ref{th-Coulhon1} and the hypothesis, for $A$ large enough,  
$$\exp(-An/\pi(n))\le \psi(n).$$ Hence
$$\Lambda_\phi(\exp(An/\pi(n)))\le \Lambda_\phi(1/\psi(n))\le 
\frac{A}{2\pi(n)}.$$
\end{proof}
\begin{rem}In most cases, $n\ra An/\pi(n)$ is invertible and the Lemma 
gives an upper-bound on $\Lambda_\phi$.
\end{rem}

\begin{cor}[Folklore] Let $\phi$ be a symmetric probability measure on $G$.
\begin{itemize}
\item If
$\phi^{(n)}(e)\ge \exp(-n^\gamma)$. Then $\Lambda_\phi(v)\le C[\log(e+ v)]
^{-(1-\gamma)/\gamma}$.
\item If $\phi^{(n)}(e)\ge \exp(-n/[\log n]^\gamma)$. 
Then $\Lambda_\phi(v)\le C[\log(e+\log(e+ v))]^{-\gamma}$.
\end{itemize}
\end{cor}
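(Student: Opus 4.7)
The plan is to deduce both bounds as direct consequences of Lemma \ref{lem-phiLambda} by choosing the function $\pi$ so that the hypothesis $\phi^{(n)}(e)\ge \exp(-n/\pi(n))$ matches each assumption, and then inverting the relation $v = \exp(An/\pi(n))$ to express $\pi(n)$ as a function of $v$.

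For the first item, write $\phi^{(n)}(e)\ge \exp(-n^\gamma) = \exp(-n/\pi(n))$ with $\pi(n) = n^{1-\gamma}$; the condition $\pi(n)\le cn$ holds since $\gamma\in(0,1)$. Lemma \ref{lem-phiLambda} then gives $\Lambda_\phi\bigl(\exp(An^\gamma)\bigr)\le A/(2n^{1-\gamma})$. Inverting $v = \exp(An^\gamma)$ yields $n \asymp (\log v)^{1/\gamma}$ and hence $n^{1-\gamma}\asymp (\log v)^{(1-\gamma)/\gamma}$, which gives the claimed $\Lambda_\phi(v)\lesssim [\log(e+v)]^{-(1-\gamma)/\gamma}$ on large $v$; the $e+$ inside the logarithm is just to handle the small-$v$ regime where $\Lambda_\phi$ is bounded anyway.

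For the second item, write $\phi^{(n)}(e)\ge \exp(-n/[\log n]^\gamma)$ with $\pi(n) = [\log n]^\gamma$; again $\pi(n)\le cn$ trivially. Lemma \ref{lem-phiLambda} gives $\Lambda_\phi\bigl(\exp(An/[\log n]^\gamma)\bigr)\le A/(2[\log n]^\gamma)$. Now setting $v = \exp(An/[\log n]^\gamma)$ and taking logarithms gives $\log v = An/[\log n]^\gamma$, and taking a second logarithm yields $\log\log v = \log n - \gamma\log\log n + O(1)\sim \log n$. Therefore $[\log n]^\gamma \asymp [\log\log v]^\gamma$, which is the desired bound $\Lambda_\phi(v)\lesssim [\log(e+\log(e+v))]^{-\gamma}$.

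The only mildly delicate point is the asymptotic inversion in the second case (controlling the $\log\log n$ correction), but this is routine and the main obstacle is purely bookkeeping; no new ideas beyond Lemma \ref{lem-phiLambda} are needed.
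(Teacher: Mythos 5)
Your proof is correct and is precisely the intended argument: the corollary sits immediately after Lemma \ref{lem-phiLambda} and is labeled ``Folklore'' without a supplied proof, and the route you take (choose $\pi(n)=n^{1-\gamma}$ or $\pi(n)=[\log n]^\gamma$, apply the lemma, then invert $v=\exp(An/\pi(n))$, using the monotonicity of $\Lambda_\phi$ to cover intermediate values of $v$) is the natural way to extract it. The only detail left implicit, which is worth a sentence in a written-up version, is the monotonicity step: for a given large $v$ one picks the largest integer $n$ with $\exp(An/\pi(n))\le v$ and uses that $\Lambda_\phi$ is non-increasing; since the ratio of consecutive values $\exp(A(n+1)/\pi(n+1))/\exp(An/\pi(n))$ stays bounded, this loses only a constant. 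The asymptotic inversion $\log\log v\sim\log n$ in the second item is indeed routine, as you say.
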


\subsection{The profiles $\Lambda_{p,\phi}$}
The $L^2$-isoperimetric profile $\Lambda_{2,\phi}=\Lambda_\phi$
is naturally related to the analogous $L^1$-profile 
$$\Lambda_{1,\phi}(v)=\inf\left\{\frac{1}{2}\sum_{x,y}|f(xy)-f(x)|\phi(y): 
|\mbox{support}(f)|\le v,\;\;\|f\|_1=1\right\}.$$
Using an appropriate discrete co-area formula, 
$\Lambda_{1,\phi}$ can equivalently be defined by
$$\Lambda_{1,\phi}(v)= \inf
\left\{|\Omega|^{-1}\sum_{x,y}\mathbf 1_\Omega(x)
\mathbf 1_{G\setminus \Omega}(xy)\phi(y): 
|\Omega|\le v\right\}.$$
If we define the boundary of $\Omega$ to be the set   
$$\partial\Omega=\left\{(x,y)\in G\times G: x\in \Omega, y\in G\setminus \Omega
\right\}$$ and set
$\phi(\partial \Omega)= \sum_{x\in \Omega,xy\in G\setminus \Omega}\phi(y)$
then $\Lambda_{1,\phi}(v)= \inf\{\phi(\partial\Omega)/|\Omega|: |\Omega|\le v\}$.

From these definitions and remarks, it follows that
\begin{equation} \label{Cheeger}
\frac{1}{2}\Lambda^2_{1,\phi}\le \Lambda_{2,\phi}\le \Lambda_{1,\phi}
\end{equation}
The upper bound is very straightforward since it suffices to test the 
definition of $\Lambda_{2,\phi}$ on functions of the type $\mathbf 1_\Omega$
to obtain it. The lower bound is obtained by testing the definition 
of $\Lambda_{1,\phi}$ on functions of the form $f^2$, $f\ge 0$, and using the 
Cauchy-Schwarz inequality. In fact, for any $p\ge 1$, set
$$\mathcal E_{p,\phi}(f)=\frac{1}{2}\sum_{x,y}|f(xy)-f(x)|^p\phi(y)$$
and 
\begin{equation}\label{p-profile}
\Lambda_{p,\phi}(v)=
\inf
\left\{ \mathcal E_{p,\phi}(f): 
|\mbox{support}(f)|\le v, \|f\|_p=1\right\}.
\end{equation}
\begin{pro}[Folklore] For $1\le p\le q<\infty$, we  have
\begin{equation} \label{Cheegerpq}
c(p,q)\Lambda^{q/p}_{p,\phi}\le \Lambda_{q,\phi} \le C(p,q)\Lambda_{p,\phi}. 
\end{equation}
\end{pro}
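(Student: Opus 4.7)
The plan is to prove both inequalities by testing one profile on a carefully chosen power of an admissible function for the other. Throughout we may restrict the infima defining $\Lambda_{p,\phi}$ to nonnegative functions, since $\||f|\|_p=\|f\|_p$ and $\bigl||a|-|b|\bigr|\le|a-b|$ implies $\mathcal{E}_{p,\phi}(|f|)\le\mathcal{E}_{p,\phi}(f)$.

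For the upper bound $\Lambda_{q,\phi}\le C(p,q)\Lambda_{p,\phi}$, I would take $f\ge 0$ admissible for $\Lambda_{p,\phi}(v)$ (so $\|f\|_p=1$ and $|\mathrm{supp}(f)|\le v$) and test on $g=f^{p/q}$. Then $\|g\|_q^q=\sum f^p=1$ and $\mathrm{supp}(g)=\mathrm{supp}(f)$. Since $p/q\in(0,1]$, the elementary inequality $|a^{p/q}-b^{p/q}|\le |a-b|^{p/q}$ for $a,b\ge 0$ raised to the power $q$ gives $|g(xy)-g(x)|^q\le|f(xy)-f(x)|^p$, so $\mathcal{E}_{q,\phi}(g)\le\mathcal{E}_{p,\phi}(f)$. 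Taking the infimum over admissible $f$ yields $\Lambda_{q,\phi}(v)\le\Lambda_{p,\phi}(v)$, so one may take $C(p,q)=1$.

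For the lower bound $c(p,q)\Lambda_{p,\phi}^{q/p}\le\Lambda_{q,\phi}$, I would take $f\ge 0$ admissible for $\Lambda_{q,\phi}(v)$ and test on $g=f^{q/p}$, which satisfies $\|g\|_p^p=1$ and has the same support. Since $q/p\ge 1$, the mean value estimate yields $|a^{q/p}-b^{q/p}|\le(q/p)(a+b)^{q/p-1}|a-b|$, so
\[
|g(xy)-g(x)|^p\le (q/p)^p\bigl(f(xy)+f(x)\bigr)^{q-p}|f(xy)-f(x)|^p.
\]
Now summing against $\phi(y)$ and applying H\"older's inequality with conjugate exponents $\tfrac{q}{q-p}$ and $\tfrac{q}{p}$ (noting $\tfrac{q-p}{q}+\tfrac{p}{q}=1$), I would split the factors $(f(xy)+f(x))^{q-p}$ and $|f(xy)-f(x)|^p$:
\[
\sum_{x,y}\bigl(f(xy)+f(x)\bigr)^{q-p}|f(xy)-f(x)|^p\phi(y)\le \Bigl(\sum_{x,y}(f(xy)+f(x))^q\phi(y)\Bigr)^{\!(q-p)/q}\!\bigl(2\mathcal{E}_{q,\phi}(f)\bigr)^{p/q}.
\]
The translation invariance of counting measure and $(a+b)^q\le 2^{q-1}(a^q+b^q)$ show that the first factor on the right is bounded by $2^q\|f\|_q^q=2^q$. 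Combining, $\mathcal{E}_{p,\phi}(g)\le C(p,q)\,\mathcal{E}_{q,\phi}(f)^{p/q}$, and using $g$ as a test function in the definition of $\Lambda_{p,\phi}(v)$ yields $\Lambda_{p,\phi}(v)\le C(p,q)\mathcal{E}_{q,\phi}(f)^{p/q}$. Taking the infimum over admissible $f$ gives the desired $c(p,q)\Lambda_{p,\phi}(v)^{q/p}\le\Lambda_{q,\phi}(v)$.

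The upper bound is entirely soft once the substitution $g=f^{p/q}$ and the subadditivity of $t\mapsto t^{p/q}$ are in hand. The only genuinely substantive step is the lower bound: the main obstacle is to peel off the right H\"older exponents so that the ``extra'' factor $(f(xy)+f(x))^{q-p}$ gets absorbed against $\|f\|_q^q=1$ while leaving $\mathcal{E}_{q,\phi}(f)$ raised to the correct power $p/q$. All constants track explicitly through these computations and depend only on $p,q$.
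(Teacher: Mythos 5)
Your proof is correct, but your handling of the upper bound $\Lambda_{q,\phi}\le C(p,q)\Lambda_{p,\phi}$ is genuinely different from — and in fact simpler than — what the paper does, so it is worth comparing the two. For the lower (Cheeger-type) bound $c(p,q)\Lambda_{p,\phi}^{q/p}\le\Lambda_{q,\phi}$, your argument (test on $f^{q/p}$, mean-value bound for $t\mapsto t^{q/p}$, then H\"older with exponents $\tfrac{q}{q-p}$ and $\tfrac{q}{p}$, and absorb the $(f(xy)+f(x))^{q-p}$ factor against $\|f\|_q^q=1$ using $(a+b)^q\le2^{q-1}(a^q+b^q)$ and translation invariance) is exactly the route the paper sketches in one line.

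For the upper bound, the paper does \emph{not} test on a power; instead it introduces the truncations $f_k=(f-2^k)^+\wedge 2^k$ and invokes the $L^p$ co-area-type inequality $(\sum_k\mathcal{E}_{p,\phi}(f_k)^{\,\cdot})^{\,\cdot}\lesssim\mathcal{E}_{p,\phi}(f)$ from Bakry--Coulhon--Ledoux--Saloff-Coste, together with the layer-cake bound $\|f\|_p^p\le 2^p\sum_k2^{(k+1)p}|\{f\ge 2^{k+1}\}|$. This yields $\Lambda_{q,\phi}\le 2(1+p)4^p\,\Lambda_{p,\phi}$. Your argument replaces all of this by a single substitution $g=f^{p/q}$ and the elementary subadditivity $|a^{p/q}-b^{p/q}|\le|a-b|^{p/q}$ for $p/q\in(0,1]$: since $|g(xy)-g(x)|^q\le|f(xy)-f(x)|^p$ pointwise and $\|g\|_q^q=\|f\|_p^p$, you get $\mathcal{E}_{q,\phi}(g)\le\mathcal{E}_{p,\phi}(f)$ directly, hence $\Lambda_{q,\phi}\le\Lambda_{p,\phi}$ with constant $1$. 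This is both shorter and sharper. The likely reason the paper routes through the BCLS inequality (\ref{Hbcls}) here is that it needs that inequality again later (in Proposition \ref{pro-JLalpha}), so it is natural to introduce it at this point; but for the purposes of the present proposition your self-contained argument is cleaner. One small expository point: you appeal to the reduction to $f\ge0$ in the infimum; this is fine and standard, relying on $\bigl||a|-|b|\bigr|\le|a-b|$, and is implicitly assumed in the paper as well.
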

\begin{proof} This is closely related but different from 
\cite[Corollaire 3.2]{CSCiso}. The inequality
$c(p,q)\Lambda^{q/p}_{p,\phi}\le \Lambda_{q,\phi} $, $1\le p\le q$, 
which is a form of Cheeger's inequality, 
is obtained by testing  $\Lambda_{p,\phi}$ on functions of the 
form $f^{q/p}$, $f\ge 0$, and using H\"older inequality.  The inequality
$\Lambda_{q,\phi} \le C(p,q)\Lambda_{p,\phi}$ can be proved as follows.

For any function $f\ge 0$, set 
$f_k=(f-2^k)^+\wedge 2^k$, $k\in \mathbb Z$. 
By \cite[Section 6]{BCLS}, we have
\begin{equation}\label{Hbcls}
\left(\sum_k \mathcal E_{p,\phi}(f_k) ^{\alpha/p}\right)^{1/\alpha}\le 
2(1+p) \mathcal E_{p,\phi}(f). \end{equation}
This should be understood as an $L^p$ substitute for the $L^1$
co-area formula. 

Now, if we assume that $|\mbox{support}(f)|\le v$, we have
$$  \Lambda_{q,\phi}(v) \|f_k\|_q^q\le \mathcal E_{q,\phi}(f_k) .$$
Noting that $f_k\ge 2^k$ on $\{f\ge 2^{k+1}\}$ and that $0\le f_k/2^k\le 1$,
we obtain
$$ \Lambda_{q,\phi}(v)
2^{k q} |\{ f\ge 2^{k+1}\}|\le  2^{k(q-p)} \mathcal E_{p,\phi}(f_k).$$
This gives
\begin{equation}\label{weakLambda}
 \Lambda_{q,\phi}(v)
2^{(k+1) p} |\{ f\ge 2^{k+1}\}|
\le  2^p \mathcal E_{p,\phi}(f_k).\end{equation}
It is easy to check that (see, e.g., \cite[(4.2)]{BCLS}) 
$$\|f\|_p^p\le 2^p \sum _k 2^{(k+1) p} |\{ f\ge 2^{k+1}\}|.$$ 
Using (\ref{weakLambda}) and  (\ref{Hbcls}), this yields
$$\Lambda_{q,\phi}(v)\|f\|_p^p\le  2(1+p) 4^p \mathcal E_{p,\phi}(f)$$
Optimizing over all $f$ implies  that
$\Lambda_{q,\phi}(v)\le  2(1+p) 4^p \Lambda_{p,\phi}(v)$ as desired.
\end{proof}

\subsection{Entropy upper bounds using $\Lambda_{p,G}$ upper bounds}
\label{sec-entropy}

Given a probability measure $\mu$ on $G$, its entropy function $H_\mu$
is defined by
$$H_\mu(n)=\sum_{g\in G} -(\log \mu^{(n)}(g))  \mu^{(n)}(g).$$
See, e.g., \cite{Erschlerdrift0,Erschlerdrift,KV}.  Recall that $\mathbf u$ 
denotes the uniform probability measure on the 
symmetric generating set $S^*$ (by definition, $S^*$ contains the
identity element). Also, consider the displacement function 
$$n\mapsto L_\mu(n)= \sum_{g\in G}|g|\mu^{(n)}(g).$$

\begin{theo} \label{theo-entropy}
Assume that there exist $p\in (1,2]$, 
$\alpha\in (0,1)$ and an increasing slowly varying function $\ell$ 
such that
$$\Lambda_{p,\mathbf u}(v)\le \frac{\ell(\log(e+v))}{\log (e+v)^{1/\alpha}} .$$
For any symmetric probability measure $\mu$ with a finite $p$-moment
$\sum |g|^p\mu(g)<\infty$, we have
\begin{equation}\label{entropybound}
H_\mu(n)\le C(\mu,p,\omega)\; n^\alpha \omega(n)\end{equation}
for any increasing slowly varying function  $\omega$ such that 
$$ \ell( y^{\alpha\eta})^\alpha [\log (e+y^{\alpha\eta})]^{\alpha(1+\epsilon)}
\le \omega(y)$$
for some $\eta>1$ and $\epsilon>0 $.  Further
\begin{equation}\label{dispbound}
L_\mu(n)\le C(\mu,p,\omega)\; n^{[(3-p)+\alpha(p-1)]/2} \omega(n)^{p-1)/2}.
\end{equation}
\end{theo}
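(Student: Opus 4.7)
The plan is to convert the $L^p$-isoperimetric hypothesis into a functional inequality for the Dirichlet form $\mathcal E_{p,\mu}$, extract an entropy decay from this inequality via a Nash-type ODE, and deduce the displacement bound from an Erschler-Karlsson-type estimate.

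First, I would transfer the isoperimetric information from $\mathbf u$ to $\mu$. Along a geodesic $\sigma_1\cdots\sigma_{|y|}$ from $e$ to $y$ in the Cayley graph, H\"older's inequality gives
\[
|f(xy)-f(x)|^p \le |y|^{p-1}\sum_{i=1}^{|y|}|f(x\sigma_1\cdots\sigma_i)-f(x\sigma_1\cdots\sigma_{i-1})|^p;
\]
summing with weight $\mu(y)$ and using the $p$-moment yields $\mathcal E_{p,\mu}(f)\le C\bigl(\sum_g|g|^p\mu(g)\bigr)\mathcal E_{p,\mathbf u}(f)$, so that together with the support-constrained bound $\|f\|_p^p \le \Lambda_{p,\mathbf u}^{-1}(|\mathrm{supp}\,f|)\,\mathcal E_{p,\mathbf u}(f)$ the hypothesis provides $L^p$-isoperimetric control for $\mathcal E_{p,\mu}$ up to constants depending on $\mu$. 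Applying the level-set truncation $f_k=(f-2^k)^+\wedge 2^k$ together with the co-area-type inequality (\ref{Hbcls}) then removes the support restriction and produces a free-support inequality
\[
\|f\|_p^p\,\log^{1/\alpha}\!\Bigl(e+\tfrac{\|f\|_1^p}{\|f\|_p^p}\Bigr)\le C\,\ell\!\Bigl(\log\bigl(e+\tfrac{\|f\|_1^p}{\|f\|_p^p}\bigr)\Bigr)\,\mathcal E_{p,\mu}(f).
\]

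Next I would apply this inequality to the continuous-time kernel $h^\mu_t$ of (\ref{cont-time}). The identity $\tfrac{d}{dt}H(h^\mu_t)$ = Fisher-type quantity, together with an $L^p$-Rothaus/Beckner interpolation relating Fisher information to $\mathcal E_{p,\mu}((h^\mu_t)^{1/p})$, converts the Nash-type inequality above into a differential inequality for $H(h^\mu_t)$. Integrating on $[0,n]$ and passing to discrete time through the standard comparison between $\mu^{(n)}$ and $h^\mu_n$ yields $H_\mu(n)\lesssim n^\alpha\omega(n)$; the gap condition on $\omega$ (the factor $y^{\alpha\eta}$ with $\eta>1$) provides exactly the polylogarithmic slack needed to absorb the losses from the truncation and from the slowly varying factor $\ell$.

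The displacement bound follows from the entropy estimate via an Erschler-Karlsson-type inequality $L_\mu(n)\le C\,n^{(3-p)/2}H_\mu(n)^{(p-1)/2}$: the case $p=2$ is Corollary 1.1 of \cite{EK}, and the general case $p\in(1,2]$ is obtained by the same Cauchy-Schwarz/H\"older variational argument with $p$-moment in place of second moment. Substituting the entropy bound gives the stated displacement estimate. The principal technical obstacle is the Nash-to-entropy step: for $p=2$ it is classical via Fisher information and logarithmic Sobolev comparisons, but for $p\in(1,2)$ one must invoke a Beckner or $\Phi$-entropy interpolation (or work with $(h^\mu_t)^{p/2}$ inside the $L^2$-framework at the cost of extra factors), and one must track carefully how the slowly varying factor $\ell$ accumulates through the integration so that the final polylogarithmic loss remains inside the prescribed slack in $\omega$.
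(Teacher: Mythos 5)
Your approach diverges fundamentally from the paper's and, unfortunately, the central step cannot work as stated. The paper's proof does not go through any Nash-type inequality or Fisher-information argument; instead it embeds $G$ into a Banach space $\mathbf B_p(G)$ via a $1$-cocycle built from near-extremal test functions for $\Lambda_{p,\mathbf u}(2^{2^k})$, stratifies $G$ by the shells $\Omega_{k+1}\setminus\Omega_k$, applies the convexity bound $H_\mu(n)\le \mathbf E_\mu\bigl(\sum_k\log|\Omega_k\setminus\Omega_{k-1}|\,\mathbf 1_{\Omega_k\setminus\Omega_{k-1}}(S_n)\bigr)$ together with Jensen's inequality for a concave $F$, and finally invokes the Naor--Peres theorem $\mathbf E_\mu(\|b(S_n)\|_p^p)\le C_p\, n\,\mathbf E_\mu(\|b(S_1)\|_p^p)$ (valid for $1<p\le 2$, which is where $p>1$ enters) to control the linear-in-$n$ growth of the embedded trajectory. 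The displacement step is the only part you have in common with the paper.

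The gap in your argument is the direction of the Nash-to-entropy step. Writing $A=I-P_\mu$, the Shannon entropy of the continuous-time kernel satisfies
\[
\frac{d}{dt}H(h^\mu_t)=\mathcal E_\mu\bigl(h^\mu_t,\,-\log h^\mu_t\bigr)\ \ge\ 4\,\mathcal E_\mu\bigl(\sqrt{h^\mu_t},\sqrt{h^\mu_t}\bigr),
\]
and more generally the entropy derivative dominates (up to constants) the $L^p$-Dirichlet form of $(h^\mu_t)^{1/p}$; in the discrete setting there is no reverse inequality, since $(f(y)-f(x))(\log f(y)-\log f(x))/(\sqrt{f(y)}-\sqrt{f(x)})^2$ is unbounded as $f(y)/f(x)\to 0$ or $\infty$. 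Consequently, an isoperimetric or Nash inequality --- being a lower bound on the Dirichlet form in terms of the support size or the $L^1/L^2$ ratio --- translates into a \emph{lower} bound on $\frac{d}{dt}H(h^\mu_t)$ and hence into a lower bound on $H_\mu(n)$, not the sought upper bound. (The same directional problem appears if one phrases the Nash inequality in terms of R\'enyi entropies: Nash lower-bounds the R\'enyi $1/p$-entropy, which in turn lower-bounds the Shannon entropy.) An entropy \emph{upper} bound needs control on how far the walk spreads, which is a tail estimate; this is precisely what the paper extracts from the cocycle embedding and the Naor--Peres inequality, and what no purely spectral/Nash-type inequality can deliver. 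Your initial pseudo-Poincar\'e transfer $\mathcal E_{p,\mu}\lesssim \mathcal E_{p,\mathbf u}$ is correct, and your Erschler--Karlsson step at the end agrees with the paper, but the middle of the argument would need to be replaced entirely.
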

\begin{rem} Assume that the group $G$ satisfies
$\Phi_G(n)\ge \exp(-n^\gamma)$ where $\gamma\in (0,1)$. 
By Lemma \ref{lem-phiLambda}, we have 
$\Lambda_{2,G}(v) \le C[\log (e+v)]^{-(1-\gamma)/\gamma}$ and 
$$\Lambda_{p,G}(v)\le C[\log (e+v)]^{-\alpha_p},\;\;\alpha_p= \frac{2\gamma}{
p(1-\gamma)},\;\;p\in [1,2].$$ 
If $\gamma\in (0,1/2)$ then $2\gamma/(1-\gamma)<2$. For any $p>1$  
such that $2\gamma/(1-\gamma)<p\le 2$, we have
$\alpha_p= 2\gamma/(p(1-\gamma))
 \in (0,1) $. Under these hypotheses, for any symmetric
measure $\mu$ with finite $p$-moment,
Theorem 
\ref{theo-entropy} gives 
$$H_\mu(n)\le C_\mu n^{\alpha_p} (\log n)^{\alpha_p(1+\epsilon)}$$
for any $\epsilon>0$. In particular, the entropy of $\mu$ is sublinear and 
the entropy criteria \cite[Theorem 1.1]{KV} implies that 
bounded $\mu$-harmonic functions must be constant. 
\end{rem}
\begin{rem} The lamplighter group $G=\mathbb Z_2\wr \mathbb Z^2$
satisfies $\Phi_{G}(n)\simeq \exp(-n^{1/2})$ (equivalently,
$\Lambda_G(v)\simeq \log (e+v)^{-1}$) and $H_{\mathbf u}(n)\simeq n/\log n$.
See \cite{Erschlerdrift,Erschler2006,PSCwp}. This example is just beyond the 
limit of application of our result. Kotowski and Vir\'ag \cite{KV-1/2} describes a group $G$ for which $\Phi_G(n)\gtrsim \exp(-n^{1/2 +o(1)})$ 
and for which simple random walk has linearly growing entropy 
(the group has non-trivial bounded harmonic functions).  
\end{rem} 
\begin{rem} Theorem \ref{theo-entropy} is related to 
\cite[Theorem 1.4]{Gournay} 
and  \cite[Conjecture 1.5]{Gournay}
and some of the ingredients of the proof 
given below are similar to those used in \cite{Gournay}. 
The hypothesis (OD) appearing in \cite[Theorem 1.4]{Gournay} 
plays no role in Theorem \ref{theo-entropy}. 
\end{rem}

\begin{proof}
The proof of (\ref{entropybound}) uses the embedding of $G$ into a 
$L^p$ space introduced in 
\cite{TesseraBanach} together with \cite[Theorem 2.1]{NaorPeresIMRN}.

For each $k$, let $\phi_k$ be a function supported in a set $U_k$ 
of size $2^{2^k}$ and such that
$$\Lambda_{p,\mathbf u}(2^{2^k})=\inf\left\{\frac{1}{2}
\sum_{x,y}|f(x)-f(xy)|^p\mathbf u(y):
|\mbox{support}(f)|\le 2^{2^k}, \|f\|_p=1\right\}$$ 
is greater than 
$$\frac{1}{4} \frac{\sum_{x,y}|\phi_k(x)-\phi_k(xy)|^p\mathbf u(y)}{\sum_x|\phi_k(x)|^p}.$$
Let $\mathbf B_p(G)$ be the Banach space of sequences $(w_k)_1^\infty$ 
of elements of $\ell^p(G)$ such that $\sum_k \|w_k\|_p^p<\infty$ equipped with the norm $\|w\|_p=\left(\sum_k \|w_k\|_p^p\right)^{1/p}.$
 
Consider the embedding $b$ of the group $G$ into 
$\mathbf B_p(G)$ defined by
$$b(g)= \left(c_k \frac{\phi_k-\tau_r(g)\phi_k}
{\mathcal E_p(\phi_k)^{1/p}}\right)_1^\infty, \;\;
c_k^p=\frac{1}{(1+k)^{1+\epsilon}},$$
where 
$\tau_r(g)f: x\mapsto f(xg)$ is right translation by $g$ and
$$\mathcal E_p(f)=\frac{1}{2}\sum_{x,y}|f(xy)-f(x)|^p\mathbf u(y).$$
By construction, this is a $1$-cocycle, more precisely, an element of
$Z^1(G,\tau_r,\mathbf B_p(G))$. Indeed, for each $g\in G$,
$b(g)$ belongs to $\mathbf B_p(G)$ 
because 
\begin{equation}\label{ppp}
\|f-\tau_r(g)f\|^p_p\le |S||g|^p \sum_{x,y}|f(xy)-f(x)|^p\mathbf u(y)
\end{equation}
and  $\sum c_k^p<\infty$.
Set $\Omega_k= [\cup_1^k U_i]^{-1} [\cup_1^kU_i]$ and $\Omega_0=\emptyset$.
Note that for $g\not\in \Omega_k$, the functions $\phi_k$ and 
$\tau_r(g)\phi_k$ have disjoint supports and write
\begin{eqnarray}
\|b(g)\|_p^p& \ge & \sum_1^\infty c_k^p \frac{2\|\phi_k\|_p^p}{\mathcal E_p(\phi_k)}
\mathbf 1_{\{G\setminus \Omega_k\}}(g) \nonumber \\
&\ge&  \sum_1^\infty c_k^p \frac{1}{\Lambda_{p,\mathbf u}(2^{2^{k}})}
\mathbf 1_{\{\Omega_{k+1}\setminus \Omega_k\}}(g). \label{lowb}
\end{eqnarray}

By a well-known convexity argument,
$$H_\mu(n)\le \mathbf E_\mu\left(\sum _1^\infty \log |\Omega_k\setminus \Omega_{k-1}|\;\mathbf 1_{\Omega_k\setminus \Omega_{k-1}}(S_n)\right).$$  
By hypothesis,
$$ \log (e+|\Omega_{k+1}|) \simeq 2^k \simeq \log (e+ |\Omega_k|) \le 
C\frac{\ell( 2^k)}{\Lambda_{p,\mathbf u}(2^{2^k})^{\alpha}}.$$  
Let $F:[0,\infty)\ra [0,\infty)$ be a concave increasing 
function with $F(0)=0$
and such that
$$a t^\alpha \omega(t)
\le F(t) \le A t^\alpha \omega(t)$$
where $\omega$ is as in the statement of the theorem.
As $c_k^p=(1+k)^{-1-\epsilon}$, one can check that
$$ \frac{\ell( 2^k)}{\Lambda_{p,\mathbf u}(2^{2^k})^{\alpha}} 
 \le C F\left(\frac{c_k^p}{\Lambda_{p,\mathbf u}(2^{2^k})}\right)$$
 from which it follows that (with a different constant $C$)
$$ \log |\Omega_{k+1}| 
\le C F\left(\frac{c_k^p}{\Lambda_{p,\mathbf u}(2^{2^k})}\right).$$
Now, we have
\begin{eqnarray*}
\lefteqn{
\mathbf E_\mu\left(\sum _1^\infty \log |\Omega_{k+1}|\;
\mathbf 1_{\Omega_{k+1}\setminus \Omega_{k}}(S_n)\right)}
\hspace{1in} &&\\
&\le & 
C\mathbf E_\mu\left(\sum _1^\infty 
F\left(c_k^p /\Lambda_{p,G}(2^{2^k})\right)
\mathbf 1_{\Omega_{k+1}\setminus \Omega_{k}}(S_n)\right)\\
&= &C\mathbf E_\mu\left(
F\left(\sum _1^\infty c_k^p /\Lambda_{p,G}(2^{2^k})
\mathbf 1_{\Omega_{k+1}\setminus \Omega_{k}}(S_n)\right)\right)\\
&\le & C F \left(\mathbf E_\mu \left(\sum _1^\infty 
\frac{c_k^p }{\Lambda_{p,G}(2^{2^k})} 
\mathbf 1_{\Omega_{k+1}\setminus \Omega_{k}}(S_n)\right)\right)\\
&\le &
C F \left(\mathbf E_\mu \left(\|b(S_n)\|_p^p\right)\right).
\end{eqnarray*}
where the second to last inequality is Jensen's inequality applied to 
the concave function $F$. Finally,
we appeal to \cite[Theorem 2.1]{NaorPeresIMRN} and (\ref{ppp}) 
to conclude that (since $1<p\le 2$)
$$\mathbf E_\mu(\|b(S_n)\|_p^p)\le C_p n \mathbf E_\mu(|b(S_1)|^p)
\le C(p,S) n \sum_x|x|^p\mu(x).$$
The statement in \cite{NaorPeresIMRN} is for simple random 
walk but the proof works for an arbitrary symmetric measure $\mu$ with 
finite $p$-moment. Note that $p>1$ is essential here. This finishes 
the proof of the entropy bound (\ref{entropybound}).

We now explain how (\ref{dispbound}) follows from (\ref{entropybound}).
The statement in \cite[Corollary 5.2(i)]{EK} gives the bound 
$$L_\mu(n)\le C\sqrt{nH_\mu(n)}$$
under the assumption that the symmetric probability  
measure $\mu$ has finite second moment. This follows from two bounds
\begin{enumerate}
\item[(a)]  $|L_\mu(n+1)-L_\mu(n)|\le C\beta(n)$ (\cite[Corollary 5.2(i)]{EK}), 
\item[(b)]  $\beta(n)\le C\sqrt{H_\mu(n+1)-H_\mu(n)}$ (\cite[Lemma 5.1(ii)]{EK})
\end{enumerate}
 where 
$$\beta(n)= \sup_{s\in S}
\left\{\sum_{g\in G} |\mu^{(n)}(gs)-\mu^{(n)}(g)|\right\}.$$
The hypothesis that $\mu$ has second moment enters (a) 
but is not necessary for (b). If we replace the hypothesis that 
$\mu$ has finite second moment by the hypothesis that $\mu$ as 
finite weak-$p$-moment $W(\rho_p,\mu)<\infty$ for some $p\in (1,2]$, 
an easy modification of the proof of (a) given in \cite{EK} gives
\begin{enumerate}
\item[(a')]  $|L_\mu(n+1)-L_\mu(n)|\le C\beta(n)^{p-1}$.
\end{enumerate}
Set $\theta= (p-1)/2 \in (0,1/2]$ and write 
\begin{eqnarray*}
L_\mu(n)
&=& \sum_1^n 
\frac{L_\mu(j)-L_\mu(j-1)}{\beta(j)^{p-1}}
\beta(j)^{p-1}\\
&\le & 
\left(\sum_1^n 
\left(\frac{L_\mu(j)-\L_\mu(j-1))}{\beta(j)^{p-1}}\right)^{1/(1-\theta)}\right)^{1-\theta}
\left(\sum_1^n \beta(j)^2\right)^\theta\\
&\le & C n^{1-\theta} H_\mu(n)^\theta=C n^{(3-p)/2} H_\mu(n)^{(p-1)/2}.
\end{eqnarray*} 
This shows that (\ref{dispbound}) follows from (\ref{entropybound}).
\end{proof}

\subsection{Comparison of $\Lambda_{p,\phi}$ with $\Lambda_{p,\mathbf u}$}

By definition, we let $\Lambda_{p,G}$ 
be the $\simeq$-equivalence class of 
$\Lambda_{p,\phi}$
when $\phi$ is a fixed symmetric measure with finite 
generating  support on $G$. Note that $\Lambda_{p,G}$  
does not depend on the choice of $\phi$. We refer to this case as 
the classical case.

This subsection is devoted to a simple yet very useful result that provides 
upper bounds for $\Lambda_{p,\phi}$, $p\ge 1$ 
in terms of $\Lambda_{p,G}$ and basic information on the probability 
measure $\phi$. We can represent $\Lambda_{p,G}$ by 
$\Lambda_{p,\mathbf u}$ where $\mathbf u$ is 
the uniform measure on the fixed generating finite symmetric set $S^*$. 

For any increasing continuous function $\rho:[0,\infty)\ra [1,\infty)$, set
\begin{equation}
M_{p,\rho}(t)=t^p\left(\int_0^t \frac{s^{p-1}}{\rho(s)}ds\right)^{-1}. 
\end{equation} 
Note that we always have
$$M_{p,\rho}(t)\le p\, \rho(t).$$
Further, when $\rho$ is regularly varying of index $\alpha\in [0,\infty)$,
we have $M_{p,\rho} \simeq \rho$ if $\alpha\in [0,p)$ and 
$M_{p,\rho}(t)\simeq t^p$ if $\alpha>p$. In the case $\alpha=p$, 
explicit computations are necessary. For instance, when $\rho(t)=(1+t)^p$, 
$M_{p,\rho}(t)\simeq 1+\log (1+t).$

The following theorem will be used to obtain good lower bounds
on $\widetilde{\Phi}_{G,\rho}$, in particular, when $\rho$ is a slowly growing function.

\begin{theo} \label{pro-compphiG}
Let $\phi$ be a symmetric probability measure satisfying the 
weak moment condition
$$W(\rho,\phi)=\sup_{s>0}\{s\phi(\{x:\rho(|x|)>s\})\}\le K.$$
Then for any $v>0$ and $p\in [1,\infty)$, we have
$$\Lambda_{p,\phi}(v)\le C(p,\rho)K
\inf_{s>0}\left\{ \frac{1}{\rho(s)}+ \frac{|S^*|s^p}{M_{p,\rho}(s)}
\Lambda_{p,\mathbf u}(v)\right\}.
$$ 
In particular
$$\Lambda_{p,\phi}(v)\le 
\frac{C(p,\rho, |S^*|,K) }{M_{p,\rho}
(\Lambda_{p,\mathbf u}(v)^{-1/p})}.
$$ 
\end{theo}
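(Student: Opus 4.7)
\medskip

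\noindent\textbf{Proof proposal.} The plan is to bound $\mathcal E_{p,\phi}(f)$ on any test function $f$ with $\|f\|_p=1$ and $|\mathrm{support}(f)|\le v$ by comparison with $\mathcal E_{p,\mathbf u}(f)$, after splitting $\phi$ into a "near" and "far" part at radius $s>0$. For the far part $\{|y|>s\}$, I would use the crude bound $|f(xy)-f(x)|^p\le 2^{p-1}(|f(xy)|^p+|f(x)|^p)$, sum over $x$ using left invariance, and conclude
\[
\sum_{x,\,|y|>s}|f(xy)-f(x)|^p\phi(y)\le 2^p\|f\|_p^p\,\phi\bigl(\{|y|>s\}\bigr)\le 2^p\|f\|_p^p\,K/\rho(s),
\]
where the last step uses $\phi(\{|y|>s\})=\phi(\{\rho(|y|)>\rho(s)\})\le K/\rho(s)$, which is exactly the weak-$\rho$-moment hypothesis $W(\rho,\phi)\le K$.

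For the near part $\{|y|\le s\}$, for each such $y$ write a geodesic decomposition $y=\sigma_1\cdots\sigma_{|y|}$ with $\sigma_i\in S^*$ and apply Jensen's (power-mean) inequality:
\[
|f(xy)-f(x)|^p\le |y|^{p-1}\sum_{i=1}^{|y|}\bigl|f(x\sigma_1\cdots\sigma_i)-f(x\sigma_1\cdots\sigma_{i-1})\bigr|^p.
\]
Summing over $x$ and using left invariance, each inner sum is at most $2|S^*|\,\mathcal E_{p,\mathbf u}(f)$, so $\sum_x|f(xy)-f(x)|^p\le 2|S^*|\,|y|^p\,\mathcal E_{p,\mathbf u}(f)$. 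Multiplying by $\phi(y)$ and summing over $|y|\le s$, the key quantity is $\sum_{|y|\le s}|y|^p\phi(y)$. I would compute this by Fubini and the weak moment bound:
\[
\sum_{|y|\le s}|y|^p\phi(y)=\int_0^{s^p}\!\phi(\{|y|\ge t^{1/p},\,|y|\le s\})\,dt\le K\int_0^{s^p}\!\frac{dt}{\rho(t^{1/p})}=pK\int_0^s\!\frac{u^{p-1}}{\rho(u)}\,du=\frac{pK\,s^p}{M_{p,\rho}(s)},
\]
the last equality being the definition of $M_{p,\rho}$. Hence the near part is bounded by $2pK|S^*|\,s^p\,M_{p,\rho}(s)^{-1}\mathcal E_{p,\mathbf u}(f)$.

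Combining the two halves, dividing by $2$, and choosing $f$ to be $(1+\varepsilon)$-near-optimal in the definition of $\Lambda_{p,\mathbf u}(v)$ (such an $f$ has $\|f\|_p=1$, $|\mathrm{support}(f)|\le v$, and $\mathcal E_{p,\mathbf u}(f)\le (1+\varepsilon)\Lambda_{p,\mathbf u}(v)$), I obtain
\[
\Lambda_{p,\phi}(v)\le C(p)K\Bigl\{\frac{1}{\rho(s)}+\frac{|S^*|s^p}{M_{p,\rho}(s)}\,\Lambda_{p,\mathbf u}(v)\Bigr\},
\]
and taking the infimum over $s>0$ gives the first assertion. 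For the "in particular" clause, I balance the two terms by setting $s=\Lambda_{p,\mathbf u}(v)^{-1/p}$, so that the second term equals $|S^*|/M_{p,\rho}(s)$, and then use the elementary observation $1/\rho(s)\le p/M_{p,\rho}(s)$ (immediate from the definition of $M_{p,\rho}$) to absorb the first term, yielding the bound $C(p,\rho,|S^*|,K)\,M_{p,\rho}(\Lambda_{p,\mathbf u}(v)^{-1/p})^{-1}$.

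The main obstacle is the near-jump estimate: one must resist bounding $|y|^p\phi(y)$ by the $\rho$-moment directly (which would only give $p\rho(s)$ in place of $M_{p,\rho}(s)$ and lose the gain when $\rho$ grows faster than $s^p$). The sharper constant $M_{p,\rho}(s)$ emerges only after the Fubini rewriting and is precisely what is needed for the balancing step to deliver the clean final form.
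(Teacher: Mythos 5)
Your proof is correct and follows essentially the same route as the paper: split the energy at radius $s$, control the near part via the pseudo-Poincar\'e inequality and a layer-cake/Abel-summation bound on $\sum_{|y|\le s}|y|^p\phi(y)$ using the weak moment condition, and bound the far part by $\|f\|_p^p\,\phi(\{|y|>s\})\le K\|f\|_p^p/\rho(s)$. The only differences are cosmetic (you rederive the pseudo-Poincar\'e inequality and the truncated moment bound from scratch and spell out the balancing $s=\Lambda_{p,\mathbf u}(v)^{-1/p}$, which the paper leaves implicit).
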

\begin{proof}Recall that  
$$\Lambda_{p,\phi}(v)= \inf
\left\{\frac{1}{2}\sum_{x,y}|f(xy)-f(x)|^p\phi(y):|\mbox{support}(f)|\le v,
\|f\|_p=1\right\}$$

For any function $f$, write
\begin{eqnarray}
\sum_{x,y}|f(xy)-f(x)|^p\phi(y)
&=&\sum_{x}\sum_{|y|\le s}|f(xy)-f(x)|^p\phi(y) \nonumber\\
&& +
\sum_{x}\sum_{|y|>s}|f(xy)-f(x)|^p\phi(y) \label{split}
\end{eqnarray}
Making use of the well-known (pseudo-Poincar\'e) inequality (\cite{CSCiso})
$$\forall\,y\in G,\;\;
\sum_x|f(xy)-f(x)|^p\le |S^*||y|^p\sum_{x,z}|f(xz)-f(x)|^p\mathbf u(z)$$
the first right-hand term is bounded by
$$ |S^*|\left(\sum_{|y|\le s}|y|^p\phi(y)\right)\sum_{x,y}|f(xy)-f(x)|^p\mathbf u(y).$$ 
Further,
\begin{eqnarray*}
\sum_{|y|\le s}|y|^p\phi(y)&=&p\sum_{0\le k\le s} (k+1)^{p-1}\phi(\{x:|x|> k\})\\
&\le&  pK \sum_{0\le k\le s}\frac{(1+k)^p}{\rho(k)} \le C(p,\rho)K s^p/M_{p,\rho}(s).  
\end{eqnarray*}
To bound the second term on the right-hand side of (\ref{split}), we let
$$\phi'_s(y)=(\phi(\{x:|x|>s\}))^{-1}\phi(y)\mathbf 1_{\{|\cdot|>s\}}(y)$$ and
write
$$\sum_{x}\sum_{|y|>s}|f(xy)-f(x)|^p\phi(y)\le \phi(\{x:|x|>s\})
\mathcal E_{\phi'_s}(f,f)\le \frac{2K}{\rho(s)}\|f\|_2^2. $$
The desired inequality follows (with an adjusted constant $C(p,\rho)$).
\end{proof}
\subsection{Subordination}
This section introduces notation and results regarding the notion of 
subordination. We will use this notion in several important ways. 
For more background and further references to the 
literature, see \cite{Bendikov2012,BSClmrw}.

Recall that a Bernstein function is a function $f:(0,\infty)\ra \mathbb R$
such that 
\begin{equation}\label{Bern}
f(s)=a+bs +\int_{(0,\infty)}(1-e^{-st}) \nu(dt) \end{equation}
where $a,b\ge 0$ and  $\nu$ is a measure satisfying $\int_{(0,2)}d\nu(dt)
+\int_{(1,\infty)}\nu(dt)<\infty$. The measure $\nu$ is called the L\'evy 
measure of $f$.  See \cite{SSV} for details. 
For our purpose, it suffices to consider the case  $a=b=0$.  
The most classic example of Bernstein function is
$s\mapsto s^\alpha$, $\alpha\in (0,1)$, which 
has $\nu(dt)= \alpha \Gamma(\alpha-1)^{-1} t^{-1-\alpha} d t$.

A complete Bernstein function is a function $f$ of the form 
$$f(s)=s^2\int_0^\infty e^{-ts}g(t)dt$$ where $g$ is a Bernstein function. These 
are Bernstein functions and they are also called operator monotone functions. 
See \cite[Chapter 6]{SSV}.

Given a Bernstein function $f$, and a reversible Markov generator $A$, 
we can always form the operator $f(A)$ which is 
also the generator of a reversible Markov semigroup $e^{-tF(A)}$, $t\ge 0$. 
In the case of interest to us here, $A$ is the 
operator of right-convolution by $\delta_e-\phi$ on a group $G$ 
where $\phi$ is a symmetric probability measure which is (minus) the generator 
of the continuous time semigroup $e^{-tA}=H^\phi_t= \cdot *h^\phi_t$
with $h^\phi_t$ defined at (\ref{cont-time}).  Similarly, assuming 
$f(0)=a=0$ and $f(1)=1$, we have
$$f(A)= \cdot*(\delta_e-\phi_f)$$
with 
$$\phi_f= \sum_1^\infty c(f,n)\phi^{(n)}$$
where the coefficients $c(f,n)$ are given by the Taylor series 
$1-f(1-x)= \sum_1^\infty c(f,n)x^n$ at $x\sim 0$. Equivalently and 
more explicitly (see \cite{Bendikov2012}),
\begin{equation}\label{sub-cfn}
c(f,1)=b+\int_0^\infty te^{-t}\nu(dt),\;\;
c(f,n)=\int_0^\infty t^ne^{-t}\nu(dt), \; n>1.\end{equation}
 Obviously, 
the continuous time semigroup $e^{-tf(A)}$ is also the semigroup of 
right-convolution by $h^{\phi_f}_t$.  Further, because of 
the representation of $f$ using the measure $\nu$
(see the definition of Bernstein function), assuming that $a=0$, we have
$$f(A)= bA+ \int_{(0,\infty)}(I-e^{-tA})\nu(dt).$$ 
The following elegant result gives a sharp inequality for the Nash profile
of $f(A)$, that is, in our setting, the Nash profile of $\phi_f$.
\begin{theo}[{\cite[Theorem 1]{Schilling}}] \label{th-Sch} Let $f$ be a 
Bernstein function with $f(0)=0$, $f(1)=1$, and L\'evy measure $\nu$. 
Referring to the above 
setting and notation, for any symmetric probability measure $\phi$ on $G$, 
 the Nash profile ${\mathcal N}_{\phi_f}$ satisfies
$$\forall\, v>0,\;\; {\mathcal N}_{\phi_f}(v) \le \frac{2}{f(1/\mathcal N_\phi(2v))}.$$
Further, for any function $u$ such that $\|u\|_1^2/\|u\|_2^2\le v$,
\begin{equation}\label{key-Sch}
\frac{\mathcal E_{\phi_f}(u,u)}{\|u\|_2^2}\ge \frac{1}{2{\mathcal N}_\phi(2v)}
\int_0^{{\mathcal N}_\phi(2v)} \nu(s,\infty)ds.
\end{equation}
\end{theo}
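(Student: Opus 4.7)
The plan is to exploit the Bernstein integral representation
\[ f(A) = bA + \int_{(0,\infty)}(I-e^{-tA})\,\nu(dt), \]
valid since $a=f(0)=0$. Pairing with $u$ and using self-adjointness of $A = \cdot *(\delta_e - \phi)$ gives
\begin{equation*}
\mathcal{E}_{\phi_f}(u,u) \;=\; b\,\mathcal{E}_\phi(u,u) + \|u\|_2^2\int_0^\infty \psi(t)\,\nu(dt),
\end{equation*}
where $\varphi(t) := \langle e^{-tA}u,u\rangle = \|e^{-tA/2}u\|_2^2$ and $\psi(t) := 1 - \varphi(t)/\|u\|_2^2 \in [0,1]$. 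Since $b\ge 0$ and $\mathcal{E}_\phi(u,u)\ge 0$, the whole task reduces to a lower bound on $\int_0^\infty \psi(t)\,\nu(dt)$.

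The key step is to bound $\varphi$ above by applying the Nash inequality $\|g\|_2^2 \le \mathcal{N}_\phi(\|g\|_1^2/\|g\|_2^2)\mathcal{E}_\phi(g,g)$ to the time-evolved function $g_t := e^{-tA/2}u$. Markovianity makes $e^{-sA}$ an $L^1$-contraction, so $\|g_t\|_1 \le \|u\|_1$. As long as $\varphi(t) \ge \tfrac12\|u\|_2^2$, the ratio $\|g_t\|_1^2/\|g_t\|_2^2 \le 2v$, and monotonicity of $\mathcal{N}_\phi$ gives $\mathcal{N}_\phi(\|g_t\|_1^2/\|g_t\|_2^2)\le M := \mathcal{N}_\phi(2v)$. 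Since $\mathcal{E}_\phi(g_t,g_t) = -\varphi'(t)$, this yields the differential inequality $-\varphi'(t) \ge \varphi(t)/M$ on $[0,T]$, with $T := \inf\{t:\varphi(t) = \tfrac12\|u\|_2^2\}\in(0,\infty]$. Integrating gives $\varphi(t) \le \|u\|_2^2 e^{-t/M}$, equivalently $\psi(t) \ge 1-e^{-t/M}$, on $[0,T]$. For $t \ge T$, monotonicity of $\psi$ forces $\psi(t) \ge \tfrac12 \ge \tfrac12(1-e^{-t/M})$. Combining, $\psi(t) \ge \tfrac12(1-e^{-t/M})$ for all $t>0$.

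Plugging this into the identity above and invoking the Bernstein formula $f(1/M) = \int_0^\infty (1-e^{-t/M})\,\nu(dt)$ yields $\mathcal{E}_{\phi_f}(u,u)/\|u\|_2^2 \ge \tfrac12 f(1/\mathcal{N}_\phi(2v))$, which rearranges to the first conclusion $\mathcal{N}_{\phi_f}(v) \le 2/f(1/\mathcal{N}_\phi(2v))$. For the sharper inequality (\ref{key-Sch}) I would sharpen the same analysis: on $[0,T]$, $\psi'(t) = -\varphi'(t)/\|u\|_2^2 \ge \varphi(t)/(M\|u\|_2^2) \ge 1/(2M)$, so $\psi(t) \ge t/(2M)$; coupled with $\psi(t) \ge 1/2$ on $[T,\infty)$ and the observation $T \le M$ (from $\psi(T)=\tfrac12\ge T/(2M)$), this gives the clean pair $\psi(t) \ge t/(2M)$ on $[0,M]$ and $\psi(t) \ge 1/2$ on $[M,\infty)$. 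Integrating against $\nu$ and using the Fubini identity $\int_0^M t\,\nu(dt) = \int_0^M \bar\nu(s)\,ds - M\bar\nu(M)$ with $\bar\nu(s) := \nu(s,\infty)$, the boundary terms cancel and one gets $\int_0^\infty \psi\,d\nu \ge \tfrac{1}{2M}\int_0^M \bar\nu(s)\,ds$, which is exactly (\ref{key-Sch}).

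The main obstacle is the bookkeeping around the Nash inequality as the state $g_t$ evolves: because $\|g_t\|_2$ decreases while $\|g_t\|_1$ need not, the ratio $\|g_t\|_1^2/\|g_t\|_2^2$ can grow in $t$, worsening the Nash constant that one can use. The device of cutting at the time $T$ when $\varphi$ has dropped by a factor of $2$ keeps this ratio pinned at $\le 2v$, giving a genuine exponential ODI on $[0,T]$; beyond $T$ the elementary monotonicity $\psi\ge 1/2$ takes over and matches the two desired shapes ($1-e^{-t/M}$ in one case, the linear ramp $t/(2M)$ in the other).
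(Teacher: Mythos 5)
Your argument is sound and follows essentially the same route as Schilling--Wang's original proof of \cite[Theorem 1]{Schilling} (which the paper merely cites rather than re-proves): split $\mathcal E_{\phi_f}(u,u)$ via the Bernstein representation of $f(A)$, set $\varphi(t)=\|e^{-tA/2}u\|_2^2$, and drive an ordinary differential inequality for $\varphi$ from the Nash inequality by cutting at the time $T$ where $\varphi$ halves so that the Nash constant stays pinned at $\mathcal N_\phi(2v)$. The derivation of (\ref{key-Sch}) via $\psi(t)\ge t/(2M)$ on $[0,M]$, $\psi\ge 1/2$ on $[M,\infty)$, and the Fubini identity $\int_0^M t\,\nu(dt)=\int_0^M\bar\nu(s)\,ds-M\bar\nu(M)$ is clean and exactly matches the stated bound.

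The one slip is in the deduction of the first conclusion: you invoke ``$f(1/M)=\int_0^\infty(1-e^{-t/M})\,\nu(dt)$,'' but with drift $b\ge 0$ the correct identity is $f(s)=bs+\int_0^\infty(1-e^{-st})\,\nu(dt)$, and the theorem's hypotheses do not force $b=0$. Dropping the term $b\,\mathcal E_\phi(u,u)$ and relying on the pure jump part therefore leaves you short by $b/(2M)$. The repair is one line: since $\|u\|_1^2/\|u\|_2^2\le v\le 2v$, the Nash inequality applied directly to $u$ gives $\mathcal E_\phi(u,u)\ge\|u\|_2^2/M$, so
\[
\mathcal E_{\phi_f}(u,u)\;\ge\;\frac{b\|u\|_2^2}{M}+\frac{\|u\|_2^2}{2}\Bigl(f(1/M)-\frac{b}{M}\Bigr)\;=\;\frac{\|u\|_2^2}{2}\,f(1/M)+\frac{b\|u\|_2^2}{2M}\;\ge\;\frac{\|u\|_2^2}{2}\,f(1/M),
\]
which yields $\mathcal N_{\phi_f}(v)\le 2/f(1/\mathcal N_\phi(2v))$ in full generality. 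With this adjustment the proof is complete.
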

The second statement is obtained in the proof of the first 
inequality provided in \cite{Schilling}.  
By Lemma \ref{lem-IsoNash}, the Nash profile inequality stated above 
translates into the 
$L^2$-isoperimetric profile inequality
\begin{equation}\label{Lambda-f}
 f (\Lambda_\phi(8v)/2)\le   2 \Lambda_{\phi_f}(v).
\end{equation}

\subsection{Extremal profile under a moment condition}

In this subsection, we focus on the $L^2$-profile 
$\Lambda_{2,\phi}=\Lambda_\phi$ and on 
symmetric probability measures $\phi$
with a finite weak moment $W(\rho,\phi)$ relative to a natural 
class of slowly varying functions $\rho$. We show that, in this context,
the upper bound of Theorem \ref{pro-compphiG} is sharp for any (amenable) 
group $G$. To make this important result precise we need 
the following notation.

Consider  the set of all continuous increasing
functions $\rho:[0,\infty)\ra [1,\infty)$ such that
\begin{equation}\label{ellrho}
\rho(t) \simeq \left(\int_{t}^\infty\frac{ds}{(1+s)\ell(s)}\right)^{-1}
\end{equation}
where $\ell$ is a continuous increasing regularly varying function 
$\ell:[0,\infty)\ra [1,\infty)$
of index $\alpha \ge 0$ and such that 
$\int_0^\infty\frac{ds}{(1+s)\ell(s)}<\infty$. 
Under the condition $\alpha\in [0,1)$,
\cite[Theorems 2.5-2.6]{Bendikov2012} shows that
$\rho(t) \asymp 1/\psi(1/t)$ where $\psi$ is a complete Bernstein function
satisfying $\psi(0)=0, \psi(1)=1$ and 
$\psi(s)\sim c\int_0^{s}\frac{ds}{s\ell(1/s)}$ for some $c>0$. Further, 
$1-\psi(1-x)=\sum_1^\infty c(\psi,n)x^n$ with $0\ge c(\psi,n)\sim 
\frac{1}{n\ell(n)}$.

Now, referring to (\ref{ellrho}), assume that $\alpha=0$ and 
that the slowly varying function $\ell$
satisfies $\ell(t^a)\simeq \ell(t)$ for any $a>0$.
Proposition 4.2 and Remark 4.4 of \cite{Bendikov2012} show that, 
on any group $G$, the symmetric probability measure 
$$\mathbf u_\psi=\sum_1^\infty c(\psi,n)\mathbf u^{(n)}$$
obtained by $\psi$-subordination of $\mathbf u$ (recall that $\mathbf u$ is 
uniform on the fixed generating set $S^*$ of $G$) satisfies
$$W(\rho,\mathbf u_\psi) \simeq \sup_{n\ge 1}\left\{\rho (n) \sum_n^\infty 
\frac{1}{k\ell(k)}\right\}  <+\infty.$$
That is, $\mathbf u_\psi$ has finite weak $\rho$-moment.

\begin{theo}\label{th-Lpsi}
Let $G$ be a finitely generated amenable group. 
Let $\rho:[0,\infty)\ra [1,\infty)$ be of the type {\em (\ref{ellrho})} with
$\ell$ slowly varying and satisfying $\ell(t^a)\simeq \ell(t)$ for all $a>0$.
Let $\psi$ be the associated complete Bernstein function described above. 
There are constants  $c=c(G,|S^*|,\rho), C=C(G,|S^*|,\rho)\in (0,\infty)$ 
such that $W(\rho,\mathbf u_\psi)\le C<$ and
$$  \frac{c}{\rho( 1/\Lambda_{\mathbf u}(v))}\le   
\Lambda_{\mathbf u_\psi}(v) \le \frac{C}{\rho( 1/\Lambda_{\mathbf u}(v))}.$$
Further, for any symmetric probability measure $\phi$ with $W(\rho,\phi)\le K$
$$\Lambda_{\phi}(v) \le \frac{CK}{\rho( 1/\Lambda_{\mathbf u}(v))}.$$
In particular, the extremal profile
$\widetilde{\boldsymbol \Lambda}_{G,\rho}$
satisfies
$$\widetilde{\boldsymbol \Lambda}_{G,\rho}(v)\simeq 
\frac{1}{\rho(1/\Lambda_{G}(v) )}.$$
\end{theo}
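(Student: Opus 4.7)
The plan is to assemble three ingredients and combine them. First, Theorem \ref{pro-compphiG} supplies the upper bound $\Lambda_\phi(v) \le CK/M_{2,\rho}(\Lambda_{\mathbf u}(v)^{-1/2})$ valid for any symmetric $\phi$ with $W(\rho,\phi) \le K$. Second, the subordination inequality (\ref{Lambda-f}) supplies a matching lower bound for the specific subordinated measure $\mathbf u_\psi$. Third, the identification $\widetilde{\boldsymbol\Lambda}_{G,\rho} \simeq 1/\rho(1/\Lambda_G)$ is obtained by taking supremum/infimum over $\widetilde{\mathcal S}_{G,\rho}$, with $\mathbf u_\psi$ (possibly rescaled) serving as the extremizer. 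The cited facts from \cite{Bendikov2012} that $\psi$ is complete Bernstein, $\psi(s)\asymp 1/\rho(1/s)$, and $W(\rho,\mathbf u_\psi)<\infty$ are the essential bridge.

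For the general upper bound, I would note that $\rho$ is slowly varying and hence regularly varying of index $0<2$, so the discussion preceding Theorem \ref{pro-compphiG} gives $M_{2,\rho}\simeq \rho$. The hypothesis $\ell(t^a)\simeq \ell(t)$ for all $a>0$ transfers to $\rho$ via the integral representation (\ref{ellrho}) and a change of variable, yielding $\rho(t^{1/2})\simeq \rho(t)$. Consequently $M_{2,\rho}(\Lambda_{\mathbf u}(v)^{-1/2}) \simeq \rho(1/\Lambda_{\mathbf u}(v))$, which gives $\Lambda_\phi(v) \lesssim K/\rho(1/\Lambda_{\mathbf u}(v))$. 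Applied with $\phi=\mathbf u_\psi$ and $K$ taken from $W(\rho,\mathbf u_\psi)\le C$, this establishes the upper half of the two-sided bound for $\Lambda_{\mathbf u_\psi}$.

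For the lower bound on $\Lambda_{\mathbf u_\psi}$, I would specialize (\ref{Lambda-f}) to $\phi = \mathbf u$ and $f=\psi$ to get $\psi(\Lambda_{\mathbf u}(8v)/2) \le 2\Lambda_{\mathbf u_\psi}(v)$. Using $\psi(s)\asymp 1/\rho(1/s)$, this becomes $\Lambda_{\mathbf u_\psi}(v) \gtrsim 1/\rho(2/\Lambda_{\mathbf u}(8v))$. The numerical factors ($2$ inside $\psi$, $8$ inside $\Lambda_{\mathbf u}$, $2$ inside $\rho$) are then absorbed by the $\simeq$-relation: $\Lambda_{\mathbf u}$ is monotone so $\Lambda_{\mathbf u}(8v)\simeq \Lambda_{\mathbf u}(v)$ automatically, and the quasi-invariance of $\rho$ under power-type rescaling handles the rest.

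The extremal identity $\widetilde{\boldsymbol\Lambda}_{G,\rho}(v)\simeq 1/\rho(1/\Lambda_G(v))$ then follows immediately: the upper bound in the previous paragraph, applied uniformly to all $\phi\in\widetilde{\mathcal S}_{G,\rho}$, gives one direction, while $\mathbf u_\psi$ (after the harmless rescaling $\phi\mapsto \epsilon\mathbf u_\psi + (1-\epsilon)\delta_e$, which scales $W(\rho,\cdot)$ and $\Lambda$ linearly and hence brings the weak moment below $2\rho(0)$) produces an element of $\widetilde{\mathcal S}_{G,\rho}$ realizing the lower bound. Amenability is used only to ensure $\Lambda_G(v)\to 0$ so that both sides are nontrivial. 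The main obstacle I anticipate is the careful bookkeeping of $\simeq$-equivalences at each stage: verifying that the quasi-invariance $\ell(t^a)\simeq\ell(t)$ indeed propagates cleanly through the chain $\ell\to\rho\to\psi$ and back, so that arguments like $\Lambda_{\mathbf u}^{-1/2}$, $2/\Lambda_{\mathbf u}(8v)$, and $1/\Lambda_{\mathbf u}(v)$ all yield $\rho$-values equivalent up to universal constants depending only on $(G,|S^*|,\rho)$.
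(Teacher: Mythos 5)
Your proposal is correct and takes essentially the same route as the paper: the upper bound via Theorem \ref{pro-compphiG} combined with the identification $M_{2,\rho}\simeq\rho$ for slowly varying $\rho$, and the lower bound for $\Lambda_{\mathbf u_\psi}$ via the subordination inequality (\ref{Lambda-f}), i.e., Lemma \ref{lem-IsoNash} plus Theorem \ref{th-Sch}. Your filled-in $\simeq$-bookkeeping (using $\ell(t^a)\simeq\ell(t)\Rightarrow\rho(t^a)\simeq\rho(t)$) and the $\epsilon$-rescaling of $\mathbf u_\psi$ to enter $\widetilde{\mathcal S}_{G,\rho}$ are exactly the details the paper's terse proof leaves implicit.
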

\begin{proof} Since the upper bounds are given by Theorem lower on $\Lambda_{\mathbf u_\psi}$.
This follows easily from (\ref{Lambda-f}), that is, from 
Lemma \ref{lem-IsoNash} and Theorem \ref{th-Sch} 
(i.e., \cite[Theorem 1]{Schilling}).  
\end{proof}

\begin{rem}Theorem \ref{th-Lpsi} 
can be interpreted as an ``almost positive'' answer to 
Problem \ref{prob1}(1) in the case where $\rho$ is of the type (\ref{ellrho})
with $\ell$ slowly varying and satisfying $\ell(t^a)\simeq \ell(t)$ 
for all $a>0$ (e.g., $\rho(t)=[1+\log (1+t)]^\alpha$, $\alpha>0$). Indeed, 
Theorem \ref{th-Lpsi} says that $\Lambda_G$ determines 
$\widetilde{\boldsymbol \Lambda}_{G,\rho}$ for such $\rho$ and this 
result can be transferred to $\Phi_G$ $\widetilde{\Phi}_{G,\rho}$ to the extend
that Theorems \ref{th-Coulhon1}-\ref{th-Coulhon2} give tight relations between 
the $\Lambda$'s and the $\Phi$'s.  See the next section for 
more explicit statements.
\end{rem}
\begin{rem}
On $\mathbb Z$ or $\mathbb Z^d$, if $\psi_\beta(s)=s^\beta$, $\beta\in (0,1)$,
$\rho_\alpha(s)=(1+s)^\alpha$, $\alpha>0$,
then $W(\rho_\alpha, \mathbf u_{\psi_{\alpha/2}})<\infty$ since, in fact,
$\mathbf u_{\psi_{\beta}}(x)\asymp (1+|x|)^{-\beta-d}$.  However, 
on a general amenable group $G$, it is not true that
$W(\rho_\alpha, \mathbf u_{\psi_{\alpha/2}})<\infty$. Indeed, the optimal 
moment condition one should expect from  $\mathbf u_{\psi_\beta}$ is a
weak $\rho_{\beta \gamma}$-moment were $\gamma\in [1/2,1]$ is 
the displacement exponent of simple random walk on $G$. 
See \cite{Bendikov2012} for details.  
Because of this, it is an open question whether $\Lambda_G$ determines
$\widetilde{\boldsymbol \Lambda}_{G,\rho_\alpha}$ for $\alpha\in (0,2)$ and, 
in fact, the authors believe the answer to this open question is likely 
to be negative.
\end{rem}

\section{Lower bounds on $\Phi_\rho$} \label{sec-lowPhi}
\setcounter{equation}{0}
Together, Lemma \ref{lem-phiLambda} and Theorem \ref{pro-compphiG}
provide an excellent way to obtain lower bounds on convolution powers 
of measures with a given moment condition, that is, on the group invariants  
$\Phi_{G,\rho},\widetilde{\Phi}_{G,\rho}$ of Definitions \ref{def1}-\ref{def2}.
This method is simpler than that of \cite{BSClmrw} and applies 
much more generally (the techniques developed in \cite{BSClmrw} 
provides additional inside and complementary results when they apply).

\begin{lem}[See, e.g., {\cite[Prosition 2.3]{CG}}] \label{lem-CG}
Referring to notation {(\ref{def-eig})}, there are constants 
$C,c\in (0,\infty)$ such that
for any symmetric probability measure 
$\mu$ on $G$, any finite subset $U\subset G$, and any $n=1,2,\dots$, we have
\begin{equation}
\mu^{(2n)}(e)\ge \frac{ce^{-Cn\lambda_\mu(U)}}{|U|}.
\end{equation}
\end{lem}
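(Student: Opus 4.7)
The plan is to test the $\ell^2(G)$ dynamics of $\mu$-convolution against the Dirichlet ground state supported on $U$, and then trade the resulting inner product for $\mu^{(2n)}(e)$ using the symmetry of $\mu$. Concretely, let $P$ denote right-convolution by $\mu$ on $\ell^2(G)$ and $P^U$ its Dirichlet restriction to $U$. Perron--Frobenius applied to the symmetric non-negative matrix $P^U$ supplies a non-negative eigenvector $\psi\ge 0$ with $\|\psi\|_2=1$ and eigenvalue $1-\lambda_\mu(U)$. Because $P$ has a non-negative kernel and dominates $P^U$ on non-negative functions, one gets the pointwise comparison $P^n\psi\ge (P^U)^n\psi=(1-\lambda_\mu(U))^n\psi$ on $G$. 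Pairing with $\psi$, using $\|\psi\|_2=1$, and then applying one Cauchy--Schwarz step promotes this to
\[
\langle P^{2n}\psi,\psi\rangle\ \ge\ \langle P^n\psi,\psi\rangle^{2}\ \ge\ (1-\lambda_\mu(U))^{2n}.
\]

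Next I would bound this inner product from above by $\mu^{(2n)}(e)\,|U|$. The symmetry of $\mu$ gives the standard peaking inequality $\mu^{(2n)}(g)\le \mu^{(2n)}(e)$ for every $g\in G$, and Cauchy--Schwarz gives $\sum_{x\in U}\psi(x)\le |U|^{1/2}\|\psi\|_2=|U|^{1/2}$. Hence
\[
\langle P^{2n}\psi,\psi\rangle\ =\ \sum_{x,y\in U}\psi(x)\mu^{(2n)}(x^{-1}y)\psi(y)\ \le\ \mu^{(2n)}(e)\Bigl(\sum_{x\in U}\psi(x)\Bigr)^{\!2}\ \le\ \mu^{(2n)}(e)\,|U|.
\]
Combining the two inequalities yields the clean intermediate estimate
\[
\mu^{(2n)}(e)\ \ge\ \frac{(1-\lambda_\mu(U))^{2n}}{|U|}.
\]

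The last step, which I expect to be the only technical nuisance, is to convert $(1-\lambda_\mu(U))^{2n}$ into $c\,e^{-Cn\lambda_\mu(U)}$ uniformly. In the regime $\lambda_\mu(U)\le 1/2$ the elementary bound $-\log(1-\lambda)\le 2\lambda$ does the job and produces $C=4$, $c=1$. The complementary regime $\lambda_\mu(U)>1/2$ is handled either by replacing $\mu$ with a lazy perturbation (which alters $\lambda_\mu(U)$ only by a factor of two and $\mu^{(2n)}(e)$ only by an inessential polynomial factor), or more cleanly by rerouting the Perron--Frobenius step through the continuous-time semigroup $h^\mu_t$ of (\ref{cont-time}): there the analogous computation gives $\langle e^{-t(I-P^U)}\psi,\psi\rangle\ge e^{-t\lambda_\mu(U)}$ with no restriction on $\lambda_\mu(U)$, and the comparisons between $\mu^{(2n)}(e)$ and $h^\mu_{2n}(e)$ already recalled in the proof of Theorem \ref{th-Coulhon1} then transfer the estimate back to discrete time after adjusting $c$ and $C$.
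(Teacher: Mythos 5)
Your argument is correct and takes a genuinely different, more elementary route than the paper. You work entirely in discrete time: Perron--Frobenius produces a non-negative Dirichlet ground state $\psi$ for $P^U$ with eigenvalue $1-\lambda_\mu(U)$, kernel positivity gives $\langle P^n\psi,\psi\rangle\ge\langle(P^U)^n\psi,\psi\rangle=(1-\lambda_\mu(U))^n$, self-adjointness and Cauchy--Schwarz upgrade this to $\langle P^{2n}\psi,\psi\rangle\ge(1-\lambda_\mu(U))^{2n}$, and the peaking inequality $\mu^{(2n)}(g)\le\mu^{(2n)}(e)$ together with $\|\psi\|_1\le|U|^{1/2}\|\psi\|_2$ give the matching upper bound $\mu^{(2n)}(e)\,|U|$. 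The paper's proof instead goes through continuous time: it uses the identity $h^\mu_t(e)=\sup\{\|H^\mu_{t/2}f\|_2^2:\|f\|_1\le1\}$, the kernel domination $h^{U,\mu}_t\le h^\mu_t$, the bound $\|f\|_1^2\le|U|\,\|f\|_2^2$ for $f$ supported in $U$, and the spectral gap of $H^{U,\mu}_t$, and then transfers back via $\mu^{(2n)}(e)\gtrsim h^\mu_n(e)$. Your route is self-contained and makes the explicit discrete intermediate estimate $\mu^{(2n)}(e)\ge(1-\lambda_\mu(U))^{2n}/|U|$ transparent; the paper's route is more modular, plugging directly into the continuous-time apparatus already assembled for Theorems \ref{th-Coulhon1}--\ref{th-Coulhon2}. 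The final conversion to $c\,e^{-Cn\lambda_\mu(U)}$ is, as you say, a nuisance, but it is one the paper's proof shares rather than resolves: the transfer $\mu^{(2n)}(e)\gtrsim h^\mu_n(e)$ invoked in the paper, like your lazy or continuous-time reroute, produces an additive tail error of order $e^{-cn}$ (from the Poisson or binomial tail) that is absorbable into the claimed form only when $n\lambda_\mu(U)\lesssim\log|U|$ --- which happens to be precisely the regime in which the lemma is actually applied later in the paper, so this is a shared and inessential wrinkle rather than a gap specific to your argument.
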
 
\begin{proof}
Inspection indicate that $\lambda_\mu(U)$ 
is the lowest eigenvalue of the continuous time semigroup 
$$H^{U,\mu}_t=e^{-t}\sum_0^\infty \frac{t^n}{n!} K^n_{U,\mu}$$
where $K_{U,\mu}(x,y)=\mu(x^{-1}y)\mathbf 1_U(x)\mathbf 1_U(y)$.
Let $h^{U,\mu}_t(x,y)$ be the kernel of this semigroup, that is,
$$h^{U,\mu}_t(x,y)=e^{-t}\sum_0^\infty \frac{t^n}{n!} K^n_{U,\mu}(x,y).$$
By elementary spectral theory,  
$$e^{-t\lambda_\phi(U)}=
\sup\{\|H^U_tf\|_2: \mbox{support}(f)\subset U,\|f\|_2=1\}.$$
Note also that $h^{U,\mu}_t(x,y)\le h^\mu_t(x,y)$ for all $x,y \in U$.
Now, we have 
$$\mu^{(2n)}(e)\gtrsim h^\mu_{n}(e) \mbox{ and }
h^\mu_t(e)=\sup\{\|H_{t/2}^\mu f\|_2^2: \|f\|_1=1\}.$$
It follows that (see \cite[Proposition 2.3]{CG}), 
for any finite set $U$ and $f$ supported in $U$,
$$h^\mu_t(e)\ge  
\frac{\|f\|_2^2}{\|f\|_1^2}\frac{\|H^{U,\mu}_{t/2}f\|_2^2}{\|f\|_2^2}
\ge \frac{1}{|U|}\frac{\|H^{U,\mu}_{t/2}f\|_2^2}{\|f\|_2^2}
 .$$ 
Taking the supremum of all $f\neq 0$ with support in $U$, we obtain
that there are constants $c,C\in (0,\infty)$ such that for any finite set 
$U\subset G$ and any $n$,
$\mu^{(2n)}(e)\ge  \frac{ce^{-Cn\lambda_\mu(U)}}{|U|}.$ \end{proof}

\begin{theo} \label{th-rho-low}
Let $G$ be a finitely generated group equipped with a 
finite symmetric generating set and the associated word-length.
Let $\rho: [0,\infty)\ra [1,\infty)$ be an non-decreasing continuous function
and set  $M(t)=t^2/\int_0^t\frac{sds}{\rho(s)}$.
Let $\mu$ be a symmetric probability measure on $G$ satisfying the weak 
moment condition
$$W(\rho,\mu)=\sup_{s>0}\{s\mu(\{x:\rho(|x|)>s\})\}\le K.$$

\begin{itemize}
\item Assume that $\Lambda_{G}(v)\simeq v^{-2/D}$  (equivalently, 
$\Phi_G(n)\simeq n^{-D/2}$). Then
$$\mu^{(2n)}(e)\gtrsim \frac{1}{(M^{-1}(n))^D}$$
where $M^{-1}$ is the inverse function of $M$.
\item Let $\pi:[0,\infty)\ra [1,\infty)$ be an non-decreasing function such that $\pi(t)\le ct$ for some $c$ and assume 
that
$$\Phi_G(n)\ge \exp(-n/\pi(n)).$$
Then, there exist $a,A\in(0,\infty)$ 
such that for any integers $k,n$  we have
$$\mu^{(2n)}(e) \ge  \exp
\left( - A\left(\frac{k}{\pi(k)} +
\frac{n}{M(a\pi(k)^{1/2})}\right)\right).
$$
\end{itemize}
\end{theo}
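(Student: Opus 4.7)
}
The strategy combines three ingredients already available: Lemma \ref{lem-phiLambda} to produce a finite set on which simple random walk has small spectral gap, the Dirichlet-form comparison underlying Theorem \ref{pro-compphiG} applied to that specific set, and Lemma \ref{lem-CG} to convert the resulting spectral bound into a return-probability lower bound.

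First, I would apply Lemma \ref{lem-phiLambda} to $\phi=\mathbf u$. Since $\mathbf u^{(n)}(e)\simeq\Phi_G(n)\gtrsim\exp(-n/\pi(n))$, there is a constant $A_0$ (depending only on $G$) such that for every integer $k$ one has $\Lambda_{\mathbf u}(\exp(A_0 k/\pi(k)))\le A_0/(2\pi(k))$. By the very definition of $\Lambda_{\mathbf u}$, one may then pick a finite set $U=U_k\subset G$ with
\[
|U_k|\le \exp\!\bigl(A_0 k/\pi(k)\bigr),\qquad
\lambda_{\mathbf u}(U_k)\le A_0/\pi(k).
\]

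Second, I would transplant the comparison argument in the proof of Theorem \ref{pro-compphiG} to the specific set $U_k$ rather than to the full profile. Choosing a test function $f$ supported in $U_k$ with $\|f\|_2=1$ and $\mathcal E_{\mathbf u}(f,f)$ within $\varepsilon$ of $\lambda_{\mathbf u}(U_k)$, and using the weak-moment hypothesis $W(\rho,\mu)\le K$, the same splitting by $|y|\le s$ versus $|y|>s$ yields, for every $s>0$,
\[
\lambda_\mu(U_k)\le \mathcal E_\mu(f,f)
\le CK\!\left(\frac{|S^*|\,s^{2}}{M(s)}\,\lambda_{\mathbf u}(U_k)+\frac{1}{\rho(s)}\right).
\]
Next I would choose $s=a\pi(k)^{1/2}$ with $a>0$ to be fixed. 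Since $\rho$ is non-decreasing we have $M(s)\le 2\rho(s)$, so $1/\rho(s)\le 2/M(s)$, and both terms on the right-hand side are of order $1/M(a\pi(k)^{1/2})$. This produces
\[
\lambda_\mu(U_k)\le \frac{A_1}{M(a\pi(k)^{1/2})}
\]
for a constant $A_1$ depending on $K$, $|S^*|$ and $a$.

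Finally, Lemma \ref{lem-CG} applied with this $U_k$ gives
\[
\mu^{(2n)}(e)\ge \frac{c\,\exp(-Cn\lambda_\mu(U_k))}{|U_k|}
\ge c\exp\!\left(-A_0\frac{k}{\pi(k)}-CA_1\frac{n}{M(a\pi(k)^{1/2})}\right),
\]
which, after absorbing constants into a single $A$, is exactly the stated bound. The main technical point I would watch is that Theorem \ref{pro-compphiG} is stated for the profile $\Lambda_{p,\phi}$, not for $\lambda_\mu$ on a given set; the adaptation I described is immediate from inspecting its proof, but it does require that the near-minimizer of $\lambda_{\mathbf u}(U_k)$ be supported exactly in $U_k$ so as to feed into Lemma \ref{lem-CG} with the size bound $|U_k|\le\exp(A_0 k/\pi(k))$. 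The other delicate point is the balancing choice $s\asymp\pi(k)^{1/2}$, which is dictated by equating the two terms in the comparison bound and matches the form $M(a\pi(k)^{1/2})$ appearing in the statement; any other scaling of $s$ would either wash out the contribution of $\lambda_{\mathbf u}(U_k)$ or fail to give a bound of the required shape.
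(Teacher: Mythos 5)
Your proof is correct and follows essentially the same route as the paper: Lemma \ref{lem-phiLambda} applied to $\mathbf u$, the Dirichlet-form comparison underlying Theorem \ref{pro-compphiG}, the balancing choice $s\asymp\pi(k)^{1/2}$ together with $M(s)\le 2\rho(s)$, and finally Lemma \ref{lem-CG}. The only organizational difference is that you fix a near-optimal set $U_k$ for $\Lambda_{\mathbf u}$ and transplant the comparison to that set, whereas the paper applies Theorem \ref{pro-compphiG} directly to obtain a bound on $\Lambda_\mu(\exp(Ak/\pi(k)))$ and then (implicitly) extracts a near-minimizing set for $\lambda_\mu$; the two routes are equivalent, and the "adaptation" you flag as the main technical point could in fact be sidestepped by applying the theorem as stated.
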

\begin{proof} The first case follows straightforwardly from Lemma \ref{lem-CG},
Theorem \ref{pro-compphiG} and elementary computations. 
It is useful to note here that the first stated estimate 
is not sharp when $\rho$ is a slowly varying function. 
In this particular context 
(polynomial volume growth and $\rho$ slowly varying), 
the second stated result provides a sharp estimate. See the Corollary 
\ref{cor-Pol-Phi}.  Many of the results provided by this first case are 
already covered in \cite{BSClmrw,SCZ0,SCZ-pollog} by different methods 
but the case when $\rho$ is regularly varying of index $2$ is new.

In the second case, referring to
Lemma \ref{lem-phiLambda} applied to the measure $\mathbf u$, i.e., the 
uniform measure of the generating set $S$, for any natural integer $k$,  
let $U$ be a set of volume $\simeq $ to $\exp(Ak\pi(k))$. 
By Lemma \ref{lem-phiLambda}, we then have
$$ \Lambda_{\mathbf u}(\exp(Ak/\pi(k))\le \frac{A}{2\pi(k)}.$$
By Theorem \ref{pro-compphiG}, this gives
$$\Lambda_\mu(\exp(Ak/\pi(k))\le 
\frac{C(\mu,\rho,|S^*|)}{M(a\pi(k)^{1/2})}$$
for some constant $a>0$.
Putting these estimates together yields
$$\mu^{(2n)}(e)\gtrsim \exp\left( - \left( A'\frac{k}{\pi(k)}+
\frac{n}{M(a\pi(k)^{1/2})}\right)\right)$$
for some $A'\in (0,\infty)$.
\end{proof}

The following corollaries of Theorem \ref{th-rho-low} illustrate the 
wide applicability and the sharpness of the results this Theorem provides.
To state these results,  let us consider  the set of all continuous increasing
functions $\rho:[0,\infty)\ra [1,\infty)$  satisfying (\ref{ellrho}), that is, 
such that
$\rho(t) \simeq \left(\int_{t}^\infty\frac{ds}{(1+s)\ell(s)}\right)^{-1}$
where $\ell$ is a regularly varying function $\ell:[0,\infty)\ra [1,\infty)$
of index $\alpha \ge 0$ with
$\int_0^\infty\frac{ds}{(1+s)\ell(s)}<\infty$.  
Under this hypothesis the function $\rho$ is regularly varying (at infinity) of 
index $\alpha\in [0,\infty)$ and the probability measure
$$\phi_\ell(g)= c_\ell \sum_1^\infty \frac{1}{\ell (4^k)} \mathbf u_{4^k}$$ 
is well defined because  
$\sum \frac{1}{\ell(4^k)}\simeq \int _0^\infty \frac{ds}{s\ell(s)}$ and 
satisfies
\begin{eqnarray*}
W(\rho,\phi_\ell)&=&\sup_{s>0}\{s\phi_\ell(\{g: \rho(|g|)>s\})\}\\
&\le &
C \sup_k\left\{\rho(4^k) \int_{4^{k}}^\infty \frac{ds}{s\ell(s)}\right\}<+\infty.
\end{eqnarray*} 
This makes $\phi_\ell$ a potential witness for the behavior of
$\widetilde{\Phi}_{G,\rho}$. 

\begin{cor}\label{cor-Pol-Phi}
Let $G$ be a finitely generated group with polynomial volume growth 
of degree $D$. Let $\rho$ be as in {\em \ref{ellrho})} and set
$M(t)=t^2/\int_0^t\frac{sds}{\rho(s)}$.
\begin{enumerate}
\item Assume that $\alpha>0$. In this case
$$\widetilde{\Phi}_{G,\rho}(n) \simeq  1/(M^{-1}(n))^D .$$ 
\item Assume that $\rho$ is slowly varying and 
satisfies $$\rho\circ\exp (u)\simeq u^{1/\gamma} \kappa (u)$$ 
with $\gamma\in (0,\infty)$, $\kappa$ slowly varying at infinity and 
$\kappa(t^a)\simeq \kappa(t)$ for any $a>0$.
Then 
$$\widetilde{\Phi}_{G,\rho}(n)\simeq \exp\left(-  [n/\kappa(n)]^{\gamma/(1+\gamma)}\right).$$

\item Assume that the function  $\kappa = \rho \circ \exp$ is slowly varying 
and  satisfies  $s\kappa^{-1}(s)\simeq \kappa^{-1}(s)$ at infinity. 
Then
$$\widetilde{\Phi}_{G,\rho}(n)\simeq \exp\left(- n/
\kappa(n)\right).$$
\end{enumerate}
\end{cor}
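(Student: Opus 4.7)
The proof separates into lower and upper bounds.

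The lower bounds come directly from Theorem \ref{th-rho-low}. In case (1), polynomial growth of degree $D$ gives $\Lambda_G(v) \simeq v^{-2/D}$, so the first bullet applies to any $\mu \in \widetilde{\mathcal S}_{G,\rho}$ and yields $\mu^{(2n)}(e) \gtrsim 1/(M^{-1}(n))^D$ directly. In cases (2) and (3) I would apply the second bullet with $\pi(n) \simeq n/\log n$ — legitimate since $\Phi_G(n) \simeq n^{-D/2} = \exp(-(D/2)\log n)$ — to obtain
$$\mu^{(2n)}(e) \ge \exp\!\bigl(-A'\log k - An/M(a(k/\log k)^{1/2})\bigr) \quad \text{for every integer } k,$$
and then optimize in $k$ by balancing the two terms inside the exponential.

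For the upper bound I would use the explicit witness $\phi_\ell$ defined immediately before the corollary statement, which satisfies $W(\rho,\phi_\ell)<\infty$. Theorem \ref{pro-compphiG} applied with $p=2$ and $\Lambda_{\mathbf u}(v) \simeq v^{-2/D}$ gives
$$\Lambda_{\phi_\ell}(v) \lesssim 1/M(v^{1/D}),$$
and feeding this isoperimetric upper bound into Theorem \ref{th-Coulhon1} produces $\phi_\ell^{(2n)}(e) \le 2\psi(2n)$, where $\psi$ is defined implicitly by $t \simeq \int_1^{1/\psi(t)} M(s^{1/D})\,ds/s$. Solving this implicit equation in each of the three regimes gives the matching upper bound.

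The routine part is a Karamata-style asymptotic analysis of $M$: in case (1) the trichotomy $\alpha<2$, $\alpha=2$, $\alpha>2$ yields $M^{-1}(n) \simeq n^{1/\alpha}$, $(n\log n)^{1/2}$, or $n^{1/2}$ respectively, each consistent with the stated bound. The substantive technical point — and the main obstacle — is handling the slowly varying cases. In case (2), the change of variable $u = \log s$ turns the $\psi$-equation into $t \simeq L^{(1+\gamma)/\gamma}\kappa(L)$ with $L = \log(1/\psi(t))$, and the hypothesis $\kappa(t^a) \simeq \kappa(t)$ is exactly what lets one invert this as $L \simeq (t/\kappa(t))^{\gamma/(1+\gamma)}$. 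In case (3) the same substitution gives $t \simeq L\kappa(L)$, and the auxiliary hypothesis on $\kappa$ is precisely what is needed to verify $\kappa(t/\kappa(t)) \simeq \kappa(t)$, so that $L \simeq t/\kappa(t)$ closes the loop. Checking that the lower-bound optimization in $k$ returns the same balance — and hence that the upper and lower bounds coincide — is the final step of the argument.
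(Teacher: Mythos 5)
Your lower-bound argument matches the paper's exactly: the first bullet of Theorem \ref{th-rho-low} for case (1), and the second bullet with $\pi(n)\simeq n/\log n$ (using $M\simeq\rho$ for slowly varying $\rho$ and $\rho(k^{1/2}/(\log k)^{1/2})\simeq\rho(k)$ from the assumed invariance $\rho(t^a)\simeq\rho(t)$) followed by optimizing in $k$ for cases (2) and (3). That part is fine.

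Your upper-bound argument, however, has a directional error that makes it break down. Theorem \ref{pro-compphiG} produces an \emph{upper} bound $\Lambda_{\phi_\ell}(v)\lesssim 1/M(v^{1/D})$. But Theorem \ref{th-Coulhon1} defines $\psi$ implicitly from $\Lambda_\phi$ via $t=\int_1^{1/\psi(t)}\frac{ds}{2s\Lambda_\phi(4s)}$ and concludes $\phi^{(2n+2)}(e)\le 2\psi(2n)$: to deduce an upper bound on $\phi_\ell^{(2n)}(e)$ from this you must plug in the true $\Lambda_{\phi_\ell}$ or a \emph{lower} bound for it. Substituting an upper bound for $\Lambda_{\phi_\ell}$ makes the integrand smaller and $\psi$ smaller, which is not a legitimate inference. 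What is actually needed for the upper bound on $\widetilde{\Phi}_{G,\rho}$ is a lower bound $\Lambda_{\phi_\ell}(v)\gtrsim 1/M(v^{1/D})$, and that comes from a pseudo-Poincar\'e inequality adapted to the spread-out measure $\phi_\ell$ (the kind of estimate in Proposition \ref{pro-JLalpha} and Proposition \ref{pro-relppi}/Corollary \ref{cor-ppi}), not from Theorem \ref{pro-compphiG}. The paper sidesteps this by citing the external results of \cite{SCZ0} for case (1) and \cite{SCZ-pollog} or \cite{Bendikov2012} for cases (2)--(3), and notes explicitly (remark after Corollary \ref{cor-ppi}) that in the slowly varying regime the pseudo-Poincar\'e estimate available in this paper is not sharp and a sharp version is taken from \cite{SCZ-pollog}. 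So your scheme cannot be completed with the tools you invoke; you would need either the appendix's pseudo-Poincar\'e machinery (for case (1)) or the external sharp estimates (for the slowly varying cases).
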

\begin{proof}
For statement 1, the lower bound follows obviously 
from the first statement in Theorem \ref{th-rho-low}. The upper bound is 
provided by \cite[theorem 1.5]{SCZ0}.

The proofs of the last two statements are similar
and we give the details only  for statement (2). By the second statement in 
Theorem \ref{th-rho-low}, we have
$$\phi_{G,\rho}(n)\ge \exp\left(- A \left(\log k +n/\rho(k)\right)\right)$$
because the hypotheses on $\rho$ implies in particular that 
$\rho(k/\log k)\simeq \rho(k)$. Pick $k$ as a function of $n$ so that
$\log k \rho(k)\simeq n$. We then have
$\phi_{G,\rho}(n)\ge \exp(- C t)$ with   $t=\log k$
with $  t \rho\circ \exp(t) \simeq n$, that is, 
$t^{(1+\gamma)/\gamma}\kappa(t) \simeq n$. Because of the assumed property 
of $\kappa$, this yields 
$$t\simeq (n/\kappa(t))^{\gamma/(1+\gamma)} 
\simeq (n/\kappa(n))^{\gamma/(1+\gamma)}.$$
The matching upper bound can be derived from \cite[Theorem 2.4]{SCZ-pollog}
of by using the subordination results of \cite{Bendikov2012}.

\end{proof} 
\begin{exa} To illustrate case 1, consider the case when $\rho(s)=(1+s)^2$. 
Corollary \ref{cor-Pol-Phi} states implies that on a group with polynomial 
volume growth of degree $D$, any symmetric measure $\mu$ with 
finite second weak-moment  satisfies $\mu^{(2n)}(e)\gtrsim [n\log n]^{-D/2}.$
This was not known and could not be proved by the techniques 
of (\cite{BSClmrw}). In \cite{SCZ0}, the authors prove that the measure 
$\phi_2(x) =\frac{c}{(1+|x|)^{2+D}}$ 
(which has finite second weak-moment) satisfies
 $\phi_2^{(n)}(e)\simeq [n\log n]^{-D/2}$. Hence, $\phi_2$ provides a 
witness to the behavior of $\widetilde{\Phi}_{G,2}$.

The simplest illustration of case 2 is when $\rho(s)=(1+\log (1+s))^\alpha$.
In this case, the result reads
$$\widetilde{\Phi}_{G,\log_{[1]}^\alpha}(n)\simeq \exp(- n^{1/(1+\alpha}).$$
This was derived by a different method in \cite{SCZ-pollog}.

The last case, case 3, is illustrated by taking $\rho$ to be the power of an 
iterated logarithm, $\rho(s)=[\log_{[k]}(s)]^\alpha$, $k\ge 2$, $\alpha>0$, 
in which case we obtain
$$
\widetilde{\Phi}_{G,\log_{[k]}^\alpha}(n)\simeq \exp
\left(- n/[\log_{[k-1]}n]^{\alpha}\right).$$
This result was also derived by a different method in \cite{SCZ-pollog}.
\end{exa}

\begin{cor}\label{cor-Good-Phi} Assume that $G$ is a finitely generated 
group with exponential volume growth and such that 
$\Phi_{G}(n)\simeq \exp(-n^{1/3})$. Let $\rho$ be as in {\em (\ref{ellrho})}. 
\begin{enumerate}
\item Assume that $\rho$ is regularly varying of index $2$. In this case
$$\widetilde{\Phi}_{G,\rho}(n) \gtrsim 
\exp\left(- n^{1/3}\int_0^{n^{1/3}}\frac{sds}{\rho(s)}\right) .$$ 
\item Assume that $\rho(s)=(1+s)^\alpha$, $\alpha\in (0,2)$,
Then 
$$\widetilde{\Phi}_{G,\rho}(n))\simeq \exp\left( -n ^{1/(1+\alpha)}\right).$$
\item Assume that $\rho$  is slowly varying 
and  satisfies  $\rho(s^a)\simeq \rho(s)$ for any $a>0$. 
Then
$$\widetilde{\Phi}_{G,\rho}(n)\simeq \exp(- n/
\rho(n)).$$
\end{enumerate}
\end{cor}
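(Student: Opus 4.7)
The plan is to derive the lower bounds by applying Theorem \ref{th-rho-low}(2) with $\pi(n)=n^{2/3}$ (which matches $\Phi_G(n)\simeq \exp(-n^{1/3})$), and to obtain the matching upper bounds in (2) and (3) via witness measures produced by subordination or by radial stable-like constructions, transferred to return probabilities through the Coulhon correspondence (Theorem \ref{th-Coulhon1}).

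First I would execute the lower bound computation. With $\pi(k)=k^{2/3}$, the second part of Theorem \ref{th-rho-low} gives, for every symmetric $\mu$ with $W(\rho,\mu)\le K$ and every $k\ge 1$,
\[
\mu^{(2n)}(e)\ge \exp\!\left(-A\bigl(k^{1/3}+n/M(ak^{1/3})\bigr)\right),\qquad M(t)=t^2\left(\int_0^t \frac{s\,ds}{\rho(s)}\right)^{-1}.
\]
For case (1), $\rho$ has regular-variation index $2$ so $\int_0^t s/\rho(s)\,ds$ is slowly varying; choosing $k=n$ makes the second term dominate and gives the stated bound directly. For case (2) one has $M(t)\simeq t^\alpha$, and balancing $k^{1/3}\simeq n/k^{\alpha/3}$ yields $k\simeq n^{3/(1+\alpha)}$ and an exponent $\simeq n^{1/(1+\alpha)}$. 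For case (3), the self-similarity $\rho(t^a)\simeq \rho(t)$ gives $M(t)\simeq \rho(t)$ and $\rho(k^{1/3})\simeq \rho(k)$; balancing $k^{1/3}\rho(k)\simeq n$ with $k\simeq(n/\rho(n))^3$ produces an exponent $\simeq n/\rho(n)$.

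Next I would produce the matching upper bounds in (2) and (3). The hypothesis $\Phi_G(n)\simeq\exp(-n^{1/3})$ translates, via Lemma \ref{lem-phiLambda} and the converse Theorem \ref{th-Coulhon2}, into $\Lambda_G(v)\simeq 1/\log^2 v$. For case (3), Theorem \ref{th-Lpsi} produces a witness $\mathbf u_\psi$ with finite $W(\rho,\mathbf u_\psi)$ and $\Lambda_{\mathbf u_\psi}(v)\simeq 1/\rho(1/\Lambda_G(v))\simeq 1/\rho(\log v)$; feeding this into Theorem \ref{th-Coulhon1} and integrating $\rho(\log s)\,ds/s$ (using the self-similarity of $\rho$) gives $\mathbf u_\psi^{(2n)}(e)\lesssim \exp(-cn/\rho(n))$. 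For case (2), the corresponding witness is an $\alpha$-stable-like radial measure of the type analyzed in the appendix (or an $\alpha/2$-subordinate of $\mathbf u$ when the moment condition permits): the subordination inequality (\ref{Lambda-f}) applied to $\psi(s)=s^{\alpha/2}$ and $\mathbf u$ shows $\Lambda_\phi(v)\gtrsim \Lambda_G(v)^{\alpha/2}\simeq 1/\log^\alpha v$, and Coulhon's formula then yields $\phi^{(2n)}(e)\lesssim \exp(-cn^{1/(1+\alpha)})$.

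The main obstacle lies in case (2): one must verify that the chosen witness simultaneously satisfies the weak $\rho_\alpha$-moment condition \emph{and} achieves the isoperimetric lower bound $\Lambda_\phi\gtrsim 1/\log^\alpha v$. The remark following Theorem \ref{th-Lpsi} warns that $\mathbf u_{\psi_{\alpha/2}}$ need not belong to $\widetilde{\mathcal S}_{G,\rho_\alpha}$ on a general amenable group, because the displacement exponent $\gamma$ of simple random walk is at most $1$, so on this point the moment requirement and the subordination lower bound on $\Lambda$ pull in opposite directions. A resolution is to abandon the subordinate of $\mathbf u$ and work instead with a long-range radial measure of $\phi_\ell$-type (for $\ell(s)=s^\alpha$), for which the weak $\rho_\alpha$-moment is automatic by the calculation preceding Theorem \ref{th-Lpsi}; the matching lower bound on its spectral profile must then be obtained by a comparison argument relying on the $L^p$-isoperimetric computations of the appendix.
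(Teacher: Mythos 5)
Your proposal is correct and follows the same overall strategy as the paper: lower bounds from Theorem~\ref{th-rho-low}(2) with $\pi(k)=k^{2/3}$, and upper bounds via explicit witness measures combined with the Coulhon correspondence. Your balancing computations in all three cases are right, and your choice $k=n$ in case (1) correctly makes the stated slowly-varying factor appear.

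Where you add value is in case (2). The paper's proof only cites ``the simple method of [BSClmrw, Section~4.2]'' and does not spell out the construction; you instead work it out, and in doing so you correctly flag the pitfall that the naive $\alpha/2$-subordinate $\mathbf u_{\psi_{\alpha/2}}$ of the simple-random-walk measure need not lie in $\widetilde{\mathcal S}_{G,\rho_\alpha}$ on a general amenable group — exactly the subtlety raised in the remark after Theorem~\ref{th-Lpsi}. Your repair, replacing the subordinate by the radial stable-like measure $\phi_\alpha=c_\alpha\sum_k 4^{-\alpha k}\mathbf u_{4^k}$, is sound: its weak $\rho_\alpha$-moment is finite by the tail computation $\phi_\alpha(\{|g|>r\})\simeq r^{-\alpha}$ (the calculation relevant here is the one displayed just before Corollary~\ref{cor-Pol-Phi}, rather than the one before Theorem~\ref{th-Lpsi}, but they are essentially identical), and the lower bound $\Lambda_{2,\phi_\alpha}(v)\gtrsim \mathfrak W(v)^{-\alpha}\simeq (\log v)^{-\alpha}$ follows from Proposition~\ref{pro-JLalpha} since the volume growth is exponential. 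Feeding this into Theorem~\ref{th-Coulhon1} yields the matching upper bound $\exp(-cn^{1/(1+\alpha)})$. This is in all likelihood exactly what the ``simple method'' of the cited reference does, so the two proofs coincide once the paper's citation is unpacked.
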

\begin{proof} Each lower bound follows easily from Theorem \ref{th-rho-low}.
 In cases 2 and 3, the upper bound can be obtained by the simple method of 
\cite[Section 4.2]{BSClmrw}. In case 3, the upper bound can also be obtain 
by the subordination technique of \cite{Bendikov2012}.
\end{proof}
\begin{rem}We note that \cite{BSClmrw} contains a complete proof of 
both the upper and lower bound for case 2 but that it completely fails 
to cover the lower bound in cases 1 and 3. 
These lower bounds (cases 1 and 3) are 
new. Proving a matching upper bound in case 1 under the same hypotheses
is an interesting open question.  It is proved below that
the lower bound in case 1 is sharp in the case of the lamplighter group 
$\mathbb Z_2\wr \mathbb Z$. A matching upper bound for 
polycyclic groups of exponential growth will be given elsewhere.     
\end{rem}

\begin{cor}\label{cor-low-Phi1} Assume that $G$ is a finitely generated group
and that there exist $0<\gamma_1\le \gamma_2<1$ such that
$$\exp(-n^{\gamma_2})\lesssim \Phi_{G}(n)\lesssim \exp(-n ^{\gamma_1}).$$
Let $\rho$ be as in {\em \ref{ellrho})} and assume that $\rho$ is 
slowly varying function satisfying $\rho(t^a)\simeq \rho(t)$ for any $a>0$. 
Then
$$\widetilde{\Phi}_{G,\rho}(n) \simeq \exp(- n/\rho(n)).$$
\end{cor}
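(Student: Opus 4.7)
The plan is to establish matching upper and lower bounds, both equivalent to $\exp(-n/\rho(n))$, by separating the two inequalities in the hypothesis.

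For the lower bound $\widetilde{\Phi}_{G,\rho}(n)\gtrsim \exp(-n/\rho(n))$, I would apply the second statement of Theorem \ref{th-rho-low} with $\pi(n)=n^{1-\gamma_2}$, which encodes $\Phi_G(n)\gtrsim\exp(-n^{\gamma_2})$. Since $\rho$ is slowly varying, Karamata's theorem gives $\int_0^t s/\rho(s)\,ds\simeq t^2/(2\rho(t))$, so $M(t)\simeq\rho(t)$. Choosing $k=n$, the resulting exponent becomes $k^{\gamma_2}+n/\rho(a\pi(k)^{1/2})$. The self-similarity $\rho(t^a)\simeq\rho(t)$ forces $\rho(a n^{(1-\gamma_2)/2})\simeq\rho(n)$, and slow variation ensures that $n^{\gamma_2}=o(n/\rho(n))$ as $n\to\infty$. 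Both terms are therefore $\lesssim n/\rho(n)$, yielding the desired lower bound for every $\mu\in\widetilde{\mathcal S}_{G,\rho}$.

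For the upper bound $\widetilde{\Phi}_{G,\rho}(n)\lesssim \exp(-n/\rho(n))$, the strategy is to exhibit a single witness in $\widetilde{\mathcal S}_{G,\rho}$ whose return probability decays at this rate. The natural candidate is the subordinated measure $\mathbf u_\psi$ of Theorem \ref{th-Lpsi}, where $\psi$ is the complete Bernstein function associated with $\rho$; that theorem guarantees $W(\rho,\mathbf u_\psi)<\infty$ and the spectral profile bound $\Lambda_{\mathbf u_\psi}(v)\gtrsim 1/\rho(1/\Lambda_{\mathbf u}(v))$. The hypothesis $\Phi_G(n)\lesssim\exp(-n^{\gamma_1})$, combined with Theorem \ref{th-Coulhon2} (or equivalently the corollary right after Lemma \ref{lem-phiLambda}), yields $\Lambda_G(v)\gtrsim[\log(e+v)]^{-(1-\gamma_1)/\gamma_1}$. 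The self-similarity of $\rho$ then gives $\rho(1/\Lambda_{\mathbf u}(v))\lesssim\rho([\log v]^{(1-\gamma_1)/\gamma_1})\simeq\rho(\log v)$, and hence $\Lambda_{\mathbf u_\psi}(v)\gtrsim 1/\rho(\log v)$.

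To convert this back into a return-probability bound, I would feed this lower bound on $\Lambda_{\mathbf u_\psi}$ into Theorem \ref{th-Coulhon1}: the defining integral satisfies
\[
t \;=\; \int_1^{1/\psi(t)}\frac{ds}{2s\,\Lambda_{\mathbf u_\psi}(4s)} \;\lesssim\; \int_1^{1/\psi(t)}\frac{\rho(\log s)}{s}\,ds,
\]
and the substitution $u=\log s$ together with Karamata's integration formula (for slowly varying $\rho$) gives $t\lesssim \log(1/\psi(t))\,\rho(\log(1/\psi(t)))$. Since $s\mapsto s\rho(s)$ is regularly varying of index $1$, its asymptotic inverse is $x\mapsto x/\rho(x)$, so this inverts to $\log(1/\psi(t))\gtrsim t/\rho(t)$, whence $\mathbf u_\psi^{(2n)}(e)\le 2\psi(2n-2)\lesssim\exp(-cn/\rho(n))$. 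Since $\mathbf u_\psi\in\widetilde{\mathcal S}_{G,\rho}$ (up to normalization), the upper bound follows.

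The main obstacle is justifying the Karamata-type inversion $T\rho(T)\gtrsim t\Rightarrow T\gtrsim t/\rho(t)$ using only slow variation and the hypothesis $\rho(t^a)\simeq\rho(t)$, rather than full regular variation. In fact the latter hypothesis is exactly what is needed: because $t/\rho(t)$ lies in the polynomial range $[t^{1-\epsilon},t]$ for any $\epsilon>0$, the self-similarity $\rho(t^a)\simeq\rho(t)$ yields $\rho(t/\rho(t))\simeq\rho(t)$, which closes the inversion. This same mechanism is what makes the slow-variation case qualitatively different from the regularly-varying-of-positive-index case and accounts for the uniform formula $\exp(-n/\rho(n))$ across the entire interval of allowed $\gamma_i$.
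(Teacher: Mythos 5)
Your proof is correct, and the lower-bound half is literally the paper's: Theorem \ref{th-rho-low} with $\pi(n)=n^{1-\gamma_2}$, $k\simeq n$, using $M\simeq\rho$ (Karamata) and $\rho(t^a)\simeq\rho(t)$ to collapse the two terms to $n/\rho(n)$.

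For the upper bound, you take a genuinely different route from the paper. The paper simply cites the subordination technique of \cite{Bendikov2012}, which estimates $\mathbf u_\psi^{(n)}(e)$ directly from the subordination integral $h^{\mathbf u_\psi}_t=\int_0^\infty h^{\mathbf u}_s\,\sigma_t(ds)$ and tail bounds on the subordinator. You instead stay inside the present paper: use Theorem \ref{th-Lpsi} (itself built on Schilling's Nash-profile inequality, Theorem \ref{th-Sch}) to get $\Lambda_{\mathbf u_\psi}(v)\gtrsim 1/\rho(1/\Lambda_{\mathbf u}(v))$, combine with Theorem \ref{th-Coulhon2} to extract $\Lambda_{\mathbf u}(v)\gtrsim[\log v]^{-(1-\gamma_1)/\gamma_1}$ from $\Phi_G(n)\lesssim\exp(-n^{\gamma_1})$, use $\rho(t^a)\simeq\rho(t)$ to reduce this to $\Lambda_{\mathbf u_\psi}(v)\gtrsim 1/\rho(\log v)$, and then invert via Coulhon's Theorem \ref{th-Coulhon1}. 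Both are subordination arguments in spirit, but yours is more self-contained (no external estimate from \cite{Bendikov2012} is needed) at the cost of an extra detour through the Nash/spectral profile and the Karamata inversion of $s\mapsto s\rho(s)$, which you correctly identify as the point where $\rho(t^a)\simeq\rho(t)$ is essential. Two small remarks: the parenthetical ``or equivalently the corollary right after Lemma \ref{lem-phiLambda}'' is incorrect --- that corollary is the converse implication (upper bound on $\Lambda_\phi$ from a lower bound on $\phi^{(n)}(e)$); Theorem \ref{th-Coulhon2} is the one you actually need and use. Also, the symbol $\psi$ in your Coulhon-integral display clashes with the Bernstein function $\psi$ of $\mathbf u_\psi$; a different letter would be clearer, though the mathematics is unambiguous from context.
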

\begin{exa}For any  group in the large class described in Corollary
\ref{cor-low-Phi1}, we have
$$\widetilde{\Phi}_{G,\log_{[k]}^\alpha}(n)\simeq \exp(-n/[\log_{[k]} n]^\alpha) $$
for each $k=1,2,\dots$ and $\alpha>0$.
\end{exa}
\begin{proof}The lower bounds follows from Theorem \ref{th-rho-low} by inspection. The upper bound follows from the subordination technique in 
\cite{Bendikov2012}.
\end{proof}
The same proof gives the following complementary result.
\begin{cor}\label{cor-low-Phi2} Assume that $G$ is a finitely generated group
and that there exist continuous positive increasing functions of slow variation 
$\pi_1\le \pi_2$ such that $\pi_i(t^a)\simeq \pi_i(t)$ for all $a>0$ and 
$$\exp(-n/\pi_1(n))\lesssim \Phi_{G}(n)\lesssim \exp(-n/\pi_2(n) ).$$
Let $\rho$ be as in {\em \ref{ellrho})} and assume that $\rho$ is 
slowly varying function satisfying $\rho(t^a)\simeq \rho(t)$ for any $a>0$. 
Then
$$\exp(-n/\rho(\pi_1(n)))\lesssim
\widetilde{\Phi}_{G,\rho}(n) \lesssim \exp(- n/\rho(\pi_2(n))).$$
\end{cor}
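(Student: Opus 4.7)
The plan is to follow the same two-pronged strategy the authors use for Corollary \ref{cor-low-Phi1}, adapting it to the two different auxiliary functions $\pi_1$ and $\pi_2$ appearing in the sandwich hypothesis on $\Phi_G$. The lower bound comes from the second case of Theorem \ref{th-rho-low} applied with $\pi=\pi_1$; the upper bound is produced by displaying a witness measure built by subordination of $\mathbf u$ by the complete Bernstein function $\psi$ associated with $\rho$ via (\ref{ellrho}).

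For the lower bound, first I observe that since $\rho$ is slowly varying we have
$$\int_0^t\frac{s\,ds}{\rho(s)}\simeq \frac{t^2}{2\rho(t)}, \qquad \mbox{so}\qquad M(t)\simeq \rho(t).$$
Combined with $\rho(t^a)\simeq\rho(t)$, this gives $M(a\pi_1(k)^{1/2})\simeq\rho(\pi_1(k))$. Plugging into the second statement of Theorem \ref{th-rho-low} (applied with the hypothesis $\Phi_G(n)\gtrsim\exp(-n/\pi_1(n))$) yields, for any symmetric $\mu$ with $W(\rho,\mu)\le K$,
$$\mu^{(2n)}(e)\ge \exp\Bigl(-A\bigl(k/\pi_1(k)+n/\rho(\pi_1(k))\bigr)\Bigr).$$
I then take $k=n$; because $\rho$ is slowly varying we have $\rho(\pi_1(n))\lesssim\pi_1(n)$ so the second term dominates the first and the bound becomes $\exp(-Cn/\rho(\pi_1(n)))$. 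The slow variation properties $\rho(t^a)\simeq\rho(t)$ and $\pi_1(t^a)\simeq\pi_1(t)$ let me absorb the constant $C$ into the argument, yielding the desired $\widetilde{\Phi}_{G,\rho}(n)\gtrsim\exp(-n/\rho(\pi_1(n)))$.

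For the upper bound, I take the witness measure $\mathbf u_\psi=\sum_{n\ge 1}c(\psi,n)\mathbf u^{(n)}$ obtained by subordinating $\mathbf u$ by the complete Bernstein function $\psi\simeq 1/\rho(1/\cdot)$ attached to $\rho$ as in the discussion preceding Theorem \ref{th-Lpsi}. The results cited from \cite{Bendikov2012} guarantee $W(\rho,\mathbf u_\psi)<\infty$, so (after a harmless normalization) $\mathbf u_\psi\in\widetilde{\mathcal S}_{G,\rho}$. The upper bound $\Phi_G(n)\lesssim\exp(-n/\pi_2(n))$, together with $\phi^{(2n)}(e)\asymp h^\phi_n(e)$ and Theorem \ref{th-Coulhon2}, gives a lower bound
$$\Lambda_{\mathbf u}(v)\gtrsim 1/\pi_2(\log v)$$
(optimising the sup in $t\sim\log v\cdot \pi_2(\log v)$ and using $\pi_2(t^a)\simeq\pi_2(t)$). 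Applying the key subordination inequality (\ref{Lambda-f}) and using $\psi(s)\asymp 1/\rho(1/s)$ together with $\rho(t^a)\simeq\rho(t)$ then yields
$$\Lambda_{\mathbf u_\psi}(v)\gtrsim \tfrac{1}{2}\psi\bigl(\Lambda_{\mathbf u}(8v)/2\bigr)\gtrsim 1/\rho(\pi_2(\log v)).$$

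Finally I feed this spectral lower bound into Theorem \ref{th-Coulhon1}: the change of variables $u=\log s$ in the defining integral for $\psi_N$ converts
$$t=\int_1^{1/\psi_N(t)}\frac{ds}{2s\,\Lambda_{\mathbf u_\psi}(4s)}\ \mbox{into}\ t\lesssim\int_0^{\log(1/\psi_N(t))}\rho(\pi_2(u))\,du\simeq U\rho(\pi_2(U)),$$
where $U=\log(1/\psi_N(t))$ and where the last $\simeq$ uses that $\rho\circ\pi_2$ is slowly varying. Solving $U\rho(\pi_2(U))\simeq n$ and using once more the slow variation to replace $\pi_2(U)$ by $\pi_2(n)$ produces $U\gtrsim n/\rho(\pi_2(n))$, hence $\mathbf u_\psi^{(2n)}(e)\lesssim\exp(-cn/\rho(\pi_2(n)))$. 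Since $\mathbf u_\psi\in\widetilde{\mathcal S}_{G,\rho}$, the upper bound on $\widetilde{\Phi}_{G,\rho}$ follows, with the constant $c$ absorbed into the argument via slow variation.

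The whole argument is essentially a transcription of the proof of Corollary \ref{cor-low-Phi1}, the only genuinely delicate point being the repeated replacement of $\pi_i$ and $\rho$ at arguments like $\log v$, $t(v)$, or $U$ by their values at $n$, which is where the hypotheses $\pi_i(t^a)\simeq\pi_i(t)$ and $\rho(t^a)\simeq\rho(t)$ are used crucially. There is no substantial new obstacle: the two-sided nature of the conclusion simply reflects that the lower bound uses $\pi_1$ through Theorem \ref{th-rho-low} while the upper bound uses $\pi_2$ through the subordination inequality (\ref{Lambda-f}).
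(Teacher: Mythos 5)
Your proposal is correct and follows essentially the same route as the paper: the lower bound is obtained from the second part of Theorem~\ref{th-rho-low} applied with $\pi=\pi_1$ (noting $M\simeq\rho$ for slowly varying $\rho$ and taking $k=n$), and the upper bound is produced by the subordination witness $\mathbf u_\psi$ together with Theorems~\ref{th-Coulhon1}--\ref{th-Coulhon2} and (\ref{Lambda-f}), which is exactly the ``subordination technique in \cite{Bendikov2012}'' the paper invokes. You have simply filled in the details that the paper leaves to the reader via the phrase ``the same proof gives the following complementary result.''
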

\begin{exa} Let $\mathbf S_{d,r}$ be the free solvable group 
of solvable length $d$ on $r$ generators.  The behavior of 
$\Phi_{\mathbf S_{d,r}}$ is described in \cite{SCZ-fsol}. In particular, for $d>2$,
$$\Phi_{\mathbf S_{d,r}}(n)\simeq \exp\left(-n \left(\frac{\log_{[d-1]}n}{\log_{[d-2]}n}\right)^{2/r}\right).$$
For  $\rho$ as in (\ref{ellrho}), slowly varying and satisfying 
$\rho(t^a)\simeq \rho(t)$ for all $a>0$, we have 
$$\widetilde{\Phi}_{\mathbf S_{d,r},\rho}(n)\simeq \exp\left(-n/\rho(\log_{[d-2]}n)\right),\;\;d>2, r\ge 1.$$
\end{exa}
\begin{exa} Consider the iterated wreath products 
(the factor $\mathbb Z$ is repeated $k$ times)
$$W_k(\mathbb Z, \mathbb Z^d)=\mathbb Z\wr (\mathbb Z\wr (\dots    \mathbb Z\wr (\mathbb Z\wr \mathbb Z^d)\dots ))$$   
and
$$W^k(\mathbb Z,\mathbb Z^d)= 
(\dots ((\mathbb Z\wr \mathbb Z)\wr \dots)\wr \mathbb Z)\wr \mathbb Z^d.$$
From \cite{Erschler2006}, we know that
$$\Phi_{W_k(\mathbb Z,\mathbb Z^d)}(n)\simeq \exp\left(-n 
\left(\frac{\log_{[k]}n}{\log_{[k-1]}n}\right)^{2/d}\right) $$ 
and
$$\Phi_{W^k(\mathbb Z,\mathbb Z^d)}(n)\simeq \exp\left(-n^{(k+d)/(2+k+d)} 
\left(\log n\right)^{2/(2+k+d)}\right).$$  
If $\rho$ at (\ref{ellrho}) is slowly varying and satisfies 
$\rho(t^a)\simeq \rho(t)$ for all $a>0$, Corollaries 
\ref{cor-low-Phi1}--\ref{cor-low-Phi2} give
$$\Phi_{W_k(\mathbb Z,\mathbb Z^d),\rho}(n)\simeq 
\exp\left(-n /\rho(\log_{[k-1]}n)\right) $$ 
and
$$\Phi_{W^k(\mathbb Z,\mathbb Z^d),\rho}(n)\simeq \exp\left(-n/\rho(n)\right).$$  
\end{exa}

\section{Random walks on wreath products}\label{sec-wreath}
\setcounter{equation}{0}
This section is devoted to the computations of the behavior of a variety 
random walks on wreath products.

First we briefly review the definition of wreath products. 
Our notation follows \cite{Pittet2002} and \cite{SCZ-dv1}.
Let $H$, $K$ be two finitely generated groups. 
Denote the identity element of \ 
$K$ by $e_K$ and identity element of $H$ by $e_H$. Let $K_{H}$ denote the direct
sum:
\begin{equation*}
K_{H}=\sum_{h\in H}K_{h}.
\end{equation*}%
The elements of $K_{H}$ are functions $f:H\rightarrow K$, $h\mapsto f(h)=k_h$, 
which have finite support in the sense that  
$\{h\in H: f(h)=k_h\neq e_K\}$ is finite. Multiplication on $K_H$ is 
simply coordinate-wise multiplication. The identity element of $K_H$ is the
constant function $\boldsymbol e_K:h\mapsto e_K$ which, abusing notation, 
we denote by $e_K$.
The group $H$ acts on $K_{H}$ by left translation:%
\begin{equation*}
\tau _l(h)f(h')=f(h^{-1}h'),\;\;h,h'\in H.
\end{equation*}%
The wreath product $K\wr H$ is defined to be semidirect product%
\begin{equation*}
K\wr H=K_{H}\rtimes _{\tau }H,
\end{equation*}%
\begin{equation*}
(f,h)(f^{\prime },h^{\prime })=(f\cdot \tau _{l}(h)f^{\prime },hh^{\prime }).
\end{equation*}%
In the lamplighter interpretation of wreath products, $H$ corresponds
to the base on which the lamplighter lives and $K$ corresponds to the lamp. 
We embed $K$ and $H$
naturally in $K\wr H$ via the injective homomorphisms%
\begin{eqnarray*}
k &\longmapsto &\underline{k}=(\boldsymbol{k}_{e_H},e_H),
\;\;\boldsymbol k_{e_H}(e_H)= k,\; \boldsymbol k_{e_H}(h)=e_K \mbox{ if } h\neq e_H \\
h &\longmapsto &\underline{h}=(\boldsymbol  e_K,h).
\end{eqnarray*}%
Let $\mu_H $ and $\mu_K$ be probability measures on $H$ and $K$ respectively.
Through the embedding, $\mu_H $ and $\mu_K $ can be viewed as probability
measures on $K\wr H.$ Consider the measure 
\begin{equation*}
\nu=\mu_K \ast \mu_H \ast \mu_K 
\end{equation*}%
on $K\wr H$. This is called the switch-walk-switch measure on $K\wr H$ 
with switch-measure $\mu_K$ and walk-measure $\mu_H$.

We can also consider the measure (again, on $K\wr H$)
\begin{equation*}
\mu= \frac{1}{2}(\mu_K +\mu_H).
\end{equation*}%
We will mostly work with this type of measure which is better adapted to 
the techniques developed below.  We note that it is obvious that
$$\mathcal E_{p,\nu}\le C(\mu_H,\mu_K) \mathcal E_{p,\mu}.$$
Conversely, if $\mu_K,\mu_H$ are symmetric and $\mu_K(e_K)>0$, we also have
$$\mathcal E_{p,\nu}\le C'(\mu_H,\mu_K)\mathcal E_{p,\mu}.$$ 
So, for symmetric measures $\mu_H,\mu_K$ with $\mu_K(e)>0$, we have
$\Lambda_{p,\mu}\simeq \Lambda_{p.\nu}$
on $K\wr H$.

\subsection{Upper bounds for $\Lambda$ on wreath products}\label{sec-wreath1}
We describe a general upper bound on 
$\Lambda_{p,H_2\wr H_1,\mu}$ in terms of $\Lambda_{p,H_i,\mu_i}$, $i=1,2$ when 
$\mu=\frac{1}{2}(\mu_1+\mu_2)$.  Throughout this work,
$\Lambda^{-1}_{p,H,\mu}$ denotes the right-continuous  inverse of the 
non-decreasing function
$v\mapsto \Lambda_{p,H,\mu}(v)$.  

\begin{theo} \label{theo-wreathlow}
Let $\mu_i$ be a symmetric probability measures on $H_i$, $i=1,2$.
The measure $\mu=\frac{1}{2}(\mu_1+\mu_2)$ defined on $H_2\wr H_1$ satisfies
$$\Lambda_{p,H_2\wr H_1,  \mu}(v)\le s$$
for all $s,v>0$ such that 
$$v\ge \left(\Lambda^{-1}_{p,H_2,\mu_2}(s)\right)^{\Lambda^{-1}_{p,H_1,\mu_1}(s)}
\Lambda^{-1}_{p,H_1,\mu_1}(s)
$$
where 
$$\Lambda^{-1}_{p,H_i,\mu_i}(s)=\inf\{ v: \Lambda_{p,H_i,\mu_i}(v)\le s\}.$$
\end{theo}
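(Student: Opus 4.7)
The plan is to exhibit an explicit admissible test function $F$ on $H_2\wr H_1$ whose support has cardinality at most $v_1 v_2^{v_1}$, where $v_i=\Lambda^{-1}_{p,H_i,\mu_i}(s)$, and whose Rayleigh quotient is bounded by $s$. By definition of the right-continuous inverse (with an $\epsilon$-approximation if needed), pick $f_1$ on $H_1$ and $\phi$ on $H_2$ supported in sets $\Omega_1,\Omega_2$ of sizes at most $v_1,v_2$ and realizing $\mathcal E_{p,\mu_1}(f_1)\le s\,\|f_1\|_p^p$ and $\mathcal E_{p,\mu_2}(\phi)\le s\,\|\phi\|_p^p$. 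Then define on $H_2\wr H_1$ the tensor-like function
\[
F(\eta,h)=f_1(h)\prod_{x\in\Omega_1}\phi(\eta(x)),
\]
with the convention that $F(\eta,h)=0$ whenever $\eta(x)\ne e_{H_2}$ for some $x\notin\Omega_1$. Then $|\mathrm{supp}(F)|\le|\Omega_1|\cdot|\Omega_2|^{|\Omega_1|}\le v_1 v_2^{v_1}$, which is the required support bound.

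The key step is the Dirichlet form computation. Since $\mu=\tfrac12(\mu_1+\mu_2)$, we have $\mathcal E_{p,\mu}(F)=\tfrac12\mathcal E_{p,\mu_1}(F)+\tfrac12\mathcal E_{p,\mu_2}(F)$, where each $\mu_i$ is viewed on $H_2\wr H_1$ via its natural embedding. A base step $(\eta,h)\mapsto(\eta,hh_1)$ leaves the lamp configuration fixed, so the finite difference factors, and summing over all admissible $\eta$ gives $\mathcal E_{p,\mu_1}(F)=\mathcal E_{p,\mu_1}(f_1)\,(\|\phi\|_p^p)^{|\Omega_1|}$. A lamp step at the current base position $h$ only modifies $\eta(h)$; the remaining lamp coordinates contribute $(\|\phi\|_p^p)^{|\Omega_1|-1}$, and the residual sums produce $\mathcal E_{p,\mu_2}(F)=\|f_1\|_p^p\,(\|\phi\|_p^p)^{|\Omega_1|-1}\mathcal E_{p,\mu_2}(\phi)$. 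Since $\|F\|_p^p=\|f_1\|_p^p\,(\|\phi\|_p^p)^{|\Omega_1|}$, dividing collapses the Rayleigh quotient to
\[
\frac{\mathcal E_{p,\mu}(F)}{\|F\|_p^p}=\tfrac12\frac{\mathcal E_{p,\mu_1}(f_1)}{\|f_1\|_p^p}+\tfrac12\frac{\mathcal E_{p,\mu_2}(\phi)}{\|\phi\|_p^p}\le s.
\]
Combined with the support bound and the monotonicity of $v\mapsto\Lambda_{p,H_2\wr H_1,\mu}(v)$, this yields the claim.

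The main obstacle is the bookkeeping in the second Dirichlet form: one must verify that a spurious factor $|\Omega_1|$ does \emph{not} appear in $\mathcal E_{p,\mu_2}(F)$. It does not, because a single lamp step changes only the lamp at the current base position $h$, so that summing $|f_1(h)|^p$ over $h\in\Omega_1$ produces $\|f_1\|_p^p$ rather than $|\Omega_1|\,\|f_1\|_p^p$. It is this cancellation that makes the mixture measure $\mu=\tfrac12(\mu_1+\mu_2)$ easier to analyze than a switch-walk-switch measure; the Dirichlet form comparison noted earlier in the section then transfers the conclusion to the switch-walk-switch setting whenever needed.
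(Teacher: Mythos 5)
Your proof is correct and follows essentially the same route as the paper: the same product test function on $H_2\wr H_1$ built from near-optimal test functions on $H_1$ and $H_2$, the same split of $\mathcal E_{p,\mu}$ into the $\mu_1$ (base) and $\mu_2$ (lamp) contributions, and the same key observation that the lamp-step computation produces $\|f_1\|_p^p$ rather than $|\Omega_1|\,\|f_1\|_p^p$, so the Rayleigh quotient collapses to $\tfrac12\bigl(\mathcal E_{p,\mu_1}(f_1)/\|f_1\|_p^p+\mathcal E_{p,\mu_2}(\phi)/\|\phi\|_p^p\bigr)\le s$.
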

\begin{proof} For each $s$ and $i=1,2$, let $v_i$ be the smallest $v$ such that
$\Lambda_{p,H_i,\mu_i} (v)\le s$. Let $\phi_i$ be a test function on $H_i$ 
such that  $|\mbox{support}(\phi_i)|\le v_i$ and
$$\frac{\mathcal E_{p,\mu_i}(\phi_i)}{\|\phi_i\|_p^p}=\Lambda_{p,H_i,\mu_i}(v_i).$$
Let $U_1$ be the support of $\phi_1$. Let $W$ be the set of functions 
$\eta:H_1\ra H_2$
whose support is contained in $U_1$ (i.e., $\eta(y)$ 
is equal to the identity element in $H_2$ when $y\not\in U_1$).
On $H_2\wr H_1$, consider the function 
$$ H_2\wr H_1\ni (\eta,x)\mapsto \phi(\eta,x)= \left(\prod_{y\in U_1} \phi_2(\eta(y))\right) \phi_1(x) \mathbf 1_{W}(\eta).$$
This function is supported on a set of size
$$|W||U_1|\le v_2^{v_1} v_1$$
and its $\ell^p$-norm on $H_2\wr H_1$
is given by
$$\|\phi\|_{\ell^p(H_2\wr H_1)}=\|\phi_1\|_{\ell^p(H_1)}^p\|\phi_2\|_{\ell^p(H_2)}^{p|U_1|}.$$
Next we have
\begin{eqnarray*}
\phi((\eta,x)(\mathbf e, z))-\phi((\eta,x)) &=&
\phi((\eta,xz))-\phi((\eta,x))\\
&=& \left(\prod_{y\in U_1} \phi_2(\eta(y))\right) 
(\phi_1(xz)-\phi_1(x)) \mathbf 1_{W}(\eta)
\end{eqnarray*}
and
\begin{eqnarray*}
\lefteqn{\phi((\eta,x)(\mathbf 1^{e_1}_t, e_1))-\phi((\eta,x))
=\phi((\eta \mathbf 1^x_t,x))-\phi((\eta,x))} \hspace{.5in}&&\\
&=& \left(\prod_{y\in U_1\setminus \{x\}} \phi_2(\eta(y))\right)
(\phi_2(\eta(x)t)-\phi_2(\eta(x)) 
\phi_1(x) \mathbf 1_{W}(\eta).
\end{eqnarray*}
This gives
\begin{eqnarray*}
\mathcal E_{p,\mu}(\phi)
&= &\frac{1}{2}\left(\frac{\mathcal E_{p,\mu_1}(\phi_1)}{\|\phi_1\|_{\ell^p(H_1)}^p}+
\frac{\mathcal E_{p,\mu_2}(\phi_2)}{\|\phi_2\|_{\ell^p(H_2)}^p}\right)
\|\phi_2\|_{\ell^p(H_2)}^{p|U_1|}\|\phi_1\|_{\ell^p(H_1)}^p\\
&\le &  s \|\phi\|_{\ell^p(H_1\wr H_2)}^p.
\end{eqnarray*}
This is the desired result.
\end{proof}

\subsection{Lower bounds on $\Lambda$ on wreath products}\label{sec-wreath2}

In \cite{Erschleriso,Erschler2006}, Erschler developed 
a method to bound the F\o lner functions of the wreath product $H_2\wr H_1$
from below in terms of the F\o lner functions of $H_1$ and $H_2$. 
This can be expressed as lower bounds on $\Lambda_{1,H_2\wr H_1}$ 
and yields good lower bounds on $\Lambda_{2,H_2\wr H_1}$ 
via the Cheeger inequality (\ref{Cheeger}). We generalize this in order 
to study spread-out measures on wreath product. 
Erschler \cite[Theorem 1]{Erschler2006}, which we recall below 
in a less general form, provides good lower bound for 
$\Lambda_{1,H_2\wr H_1,\mu}$ but, for spread-out measures, the 
Cheeger inequality might fail to provide good lower bound on 
$\Lambda_{2,H_2\wr H_1,\mu}$.  We combine comparison arguments  
with the results of Erschler to provide a widely applicable method 
to obtain satisfactory lower bounds on $\Lambda_{2,H_2\wr H_1,\mu}$.

Define the F\o lner function $\mbox{F\o l}_{G,\mu}$ by
$$\mbox{F\o l}(t)=\inf\{ v: \Lambda_{1,G,\mu}(v)\le  1/t\}.$$
Note that $\mbox{F\o l}_{G,\mu}=\Lambda_{1,G,\mu}^{-1}$ in the notation 
of Theorem \ref{theo-wreathlow}.
In the context of random walks on groups, 
\cite[Theorem 1]{Erschler2006} can be stated as follows.

\begin{theo}[{\cite[Theorem 1]{Erschler2006}}] \label{theo-Erschler}
There exists a constant $K\ge 1$ such that for any 
countable groups $H_i$ and symmetric probability measures $\mu_i$, $i=1,2$,
the measure $\mu=\frac{1}{2}(\mu_1+\mu_2)$ defined on $H_2\wr H_1$ satisfies
$$\Lambda_{1,H_2\wr H_1,  \mu}(v)\ge s/K$$
for all $s,v>0$ such that 
$$v\le \left(\Lambda^{-1}_{1,H_2,\mu_2}(s)\right)^{\Lambda^{-1}_{1,H_1,\mu_1}(s)/K}.
$$
\end{theo}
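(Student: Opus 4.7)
The plan is to follow Erschler's original approach \cite{Erschleriso,Erschler2006}, since this statement is essentially a repackaging of her isoperimetric inequality for wreath products in the language of $\Lambda_{1}$. By the discrete co-area formula it suffices to prove the edge-isoperimetric version: any finite $\Omega\subset H_2\wr H_1$ with $|\Omega|\le v_2^{v_1/K}$ satisfies $\mu(\partial\Omega)\ge (s/K)|\Omega|$, where $v_i:=\Lambda^{-1}_{1,H_i,\mu_i}(s)$. Since $\mu=\tfrac12(\mu_1+\mu_2)$, the edge boundary of $\Omega$ decomposes naturally into a \emph{walk} contribution coming from moves $(\eta,x)\mapsto(\eta,xh)$ with $h$ drawn from $\mu_1$, and a \emph{switch} contribution coming from moves $(\eta,x)\mapsto(\eta\mathbf 1^x_t,x)$ with $t$ drawn from $\mu_2$.

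For the bookkeeping I would associate to $\Omega$, for each finitely supported $\eta:H_1\to H_2$, the slice $A(\eta):=\{x\in H_1:(\eta,x)\in\Omega\}\subset H_1$; and for each $x\in H_1$ and each finitely supported $\eta':H_1\setminus\{x\}\to H_2$, the fiber $B(x,\eta'):=\{k\in H_2:(\eta'\sqcup(x\mapsto k),x)\in\Omega\}\subset H_2$. With this notation the walk boundary equals $\tfrac12\sum_\eta\mu_1(\partial A(\eta))$ and the switch boundary equals $\tfrac12\sum_{x,\eta'}\mu_2(\partial B(x,\eta'))$. The definition of $\Lambda^{-1}_{1,H_1,\mu_1}$ forces $\mu_1(\partial A(\eta))\ge s|A(\eta)|$ whenever $|A(\eta)|\le v_1$, and symmetrically $\mu_2(\partial B(x,\eta'))\ge s|B(x,\eta')|$ whenever $|B(x,\eta')|\le v_2$.

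The key step is then a trichotomy for each $(\eta,x)\in\Omega$: either (i) the slice is small, $|A(\eta)|\le v_1$; or (ii) $|A(\eta)|>v_1$ but the fiber is small, $|B(x,\eta|_{H_1\setminus\{x\}})|\le v_2$; or (iii) both the slice and the fiber are large. Contributions of type (i) and type (ii) deliver the desired lower bound on $\mu(\partial\Omega)/|\Omega|$ through the walk and switch terms respectively, so everything reduces to ruling out type (iii) under the volume hypothesis $|\Omega|\le v_2^{v_1/K}$.

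The main obstacle is exactly this last point, and it is the combinatorial content of Erschler's argument. The strategy is to start from a putative $(\eta_0,x_0)$ of type (iii) and grow an embedded subset of $\Omega$ by alternating two moves: use the fiber-largeness to vary the lamp at the current cursor position over a set of size $\asymp v_2$, and use the slice-largeness to relocate the cursor to a fresh position in $A(\eta)$ while remaining inside $\Omega$. A careful tree-search over $\asymp v_1/K$ essentially independent cursor positions then yields at least $v_2^{v_1/K}$ distinct elements of $\Omega$, contradicting the assumed bound. The delicate point is preserving the type-(iii) property at each branching of the tree, which forces a pigeonhole descent into progressively nested subsets of $\Omega$; this descent is exactly what consumes the constant $K$ in the exponent.
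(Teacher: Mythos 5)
This statement is quoted from Erschler \cite{Erschler2006} (Theorem~1 there), and the paper supplies no proof of its own, so your attempt can only be weighed against Erschler's argument, not against anything in this paper. Your setup is a reasonable rendering of the scaffolding of that argument: the co-area reduction to an edge-isoperimetric statement, the walk/switch splitting of the boundary of $\Omega\subset H_2\wr H_1$ into the sums $\tfrac12\sum_\eta\mu_1(\partial A(\eta))$ and $\tfrac12\sum_{x,\eta'}\mu_2(\partial B(x,\eta'))$, and the trichotomy of points $(\eta,x)\in\Omega$ according to whether the slice $A(\eta)$ or the fiber $B(x,\eta')$ is small. The estimates for types (i) and (ii) via the definition of $\Lambda_1^{-1}$ are essentially correct (up to minor edge-cases at the jump points of $\Lambda_1^{-1}$, and the observation that you need the fractions of types (i), (ii) to be bounded below to conclude, rather than type (iii) to be literally empty).

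The genuine gap is the final reduction, and it is not cosmetic. You reduce to ``ruling out type (iii) under the volume hypothesis'' and then propose a tree-search seeded at a \emph{single} type-(iii) element $(\eta_0,x_0)$, alternately varying the lamp at the cursor and moving the cursor inside the current slice. As stated this cannot work: one type-(iii) element gives no lower bound on $|\Omega|$ of the form $v_2^{v_1/K}$. A concrete obstruction: let $\Omega$ be the union of a single slice $\{(\eta',y):y\in A\}$ with $|A|>v_1$ and a single fiber $\{(\eta'\sqcup(x_0\mapsto k),x_0):k\in B\}$ with $|B|>v_2$, for a fixed finitely supported $\eta'$. Then $(\eta',x_0)$ is type (iii), yet $|\Omega|\approx|A|+|B|$, and every lamp-change $k\neq e_K$ lands on a configuration whose slice is the singleton $\{x_0\}$, so the tree dies at depth one. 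What saves Erschler's argument is that this $\Omega$ has enormous boundary; the descent must invoke the \emph{global} small-boundary hypothesis repeatedly, at every branching, to ensure that \emph{typical} lamp-changes still leave a large slice and \emph{typical} cursor moves still leave large fibers. That recursive bookkeeping is precisely the combinatorial heart of the theorem. You correctly flag it as ``the delicate point'' and you even sketch the right shape (``pigeonhole descent into progressively nested subsets of $\Omega$'') but you do not carry it out, so the proposal as written does not prove the statement.
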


Consider the following problem. On a finitely generated group $G$, 
given a volume $v$, find a symmetric probability measure $\zeta_{G,v}$
such that $\Lambda_{1,G,\zeta_{G,v}}(v)\simeq 1$.  For instance, on any group $G$,
if we let $r(v)$ be the smallest radius of a ball of volume greater than $v$,
the uniform probability measure $\mathbf u_{r(2v)}$ on the ball of radius $r(2v)$ 
satisfies 
$$\Lambda_{1,G,\mathbf u_{r(2v)}}(v)\ge 1/2.$$

For our purpose we will need to consider the following question. Fix a 
symmetric probability measure $\mu$ and fix $t>0$. Given a solution 
$\zeta_{G,v}$ 
to the previous problem, what is the largest volume 
$v(t)$ such that $$ t \mathcal E_{G,\mu}\ge \mathcal E_{G,\zeta_{G,v(t)}}?$$
Solution to this problem can be obtained by using pseudo-Poincar\'e 
inequalities involving $\mathcal E_{G,\mu}$. For example, if $\mu=\mathbf u$ 
is the uniform measure on our generating set $S^*$, we have the 
pseudo-Poincar\'e inequality
$$\sum_{x}|f(xy)-f(x)|^2\le |S^*| |y|^2 \mathcal E_{G,\mathbf u}(f,f).$$
It follows that for a given $t$ we can choose  
$v(t)\simeq  V_G(\sqrt{t})$ to achieve
$$ t \mathcal E_{G,\mathbf u} \ge \mathcal E_{G,\mathbf u_{r(2v(t))}}.$$
The following proposition is based on this circle of ideas and 
is stated in a form that is suitable to treat iterated wreath products.
See Theorems \ref{th-longwreath} and  \ref{th-iterated} below.

\begin{pro}\label{pro-wreathup}
Let $\mu_i$ be a symmetric probability measures on $H_i$, $i=1,2$.
Fix $\delta>0$.
Assume that for each $t>0$ we can find $v^\delta_i(t)>0$  and symmetric 
probability measures $\zeta_{i,v_i}$ on $H_i$, $i=1,2$, such that
\begin{equation}
t\mathcal E_{H_i,\mu_i}\ge \mathcal E_{H_i,\zeta_{i,v^\delta_i(t)}} \mbox{ and }
\Lambda_{1,H_i,\zeta_{i,v^\delta_i(t)}}(v^\delta_i(t))\ge \delta.
\end{equation}
Then, for the measure $\mu=\frac{1}{2}(\mu_1+\mu_2)$ on $G=H_2\wr H_1$ 
and any $t>0$, we have
$$t\mathcal E_{G,\mu}\ge \mathcal E_{G,\zeta_{v(t)}} \mbox{ and }
\Lambda_{1,G,\zeta_{v(t)}}(v(t))\ge \delta/K$$ 
where $t\mapsto v(t)$ and the probability measure 
$\zeta_{v(t)}$ on $G$ are given by
$$v(t)= [v_2^\delta(t)]^{v_1^\delta(t)/K} \mbox{ and } \zeta_{v(t)}=\frac{1}{2}(\zeta_{1,v_1^\delta(t)}+\zeta_{2,v_2^\delta(t)}).$$
In particular,
$$\Lambda_{G,\mu}(v(t))\ge \frac{c}{t}\left(\frac{\delta}{K}\right)^2.$$
\end{pro}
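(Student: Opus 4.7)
The plan is to establish the two displayed conclusions in turn and then derive the ``in particular'' bound from them via the Cheeger-type inequality (\ref{Cheeger}).

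First, I would prove the Dirichlet form comparison $t\mathcal E_{G,\mu}\ge \mathcal E_{G,\zeta_{v(t)}}$. Because $\mu=\frac{1}{2}(\mu_1+\mu_2)$ and $\zeta_{v(t)}=\frac{1}{2}(\zeta_{1,v_1^\delta(t)}+\zeta_{2,v_2^\delta(t)})$, both Dirichlet forms split as the arithmetic mean of their two summand forms on $G$, so it suffices to lift each scalar hypothesis $t\mathcal E_{H_i,\mu_i}\ge \mathcal E_{H_i,\zeta_{i,v_i^\delta(t)}}$ from $H_i$ to $G=H_2\wr H_1$. Via the natural embeddings described in the preamble, right-multiplication by the image of $h\in H_1$ shifts a state $(\eta,x)\in G$ only in its $H_1$-coordinate, while right-multiplication by the image of $k\in H_2$ modifies only the value $\eta(x)\in H_2$. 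Consequently the form $\mathcal E_{G,\mu_1}$ (resp.\ $\mathcal E_{G,\mu_2}$) applied to a function $f$ on $G$ decomposes as a sum, over the frozen coordinates, of the scalar form $\mathcal E_{H_1,\mu_1}$ (resp.\ $\mathcal E_{H_2,\mu_2}$) applied to the appropriate one-dimensional section of $f$. Summing the scalar inequalities sectionwise yields $t\mathcal E_{G,\mu_i}\ge \mathcal E_{G,\zeta_{i,v_i^\delta(t)}}$, and taking the arithmetic mean produces the desired comparison.

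Second, I would obtain $\Lambda_{1,G,\zeta_{v(t)}}(v(t))\ge \delta/K$ directly from Erschler's $L^1$-isoperimetric inequality for wreath products (Theorem \ref{theo-Erschler}). The hypothesis $\Lambda_{1,H_i,\zeta_{i,v_i^\delta(t)}}(v_i^\delta(t))\ge \delta$ is equivalent to $\Lambda^{-1}_{1,H_i,\zeta_{i,v_i^\delta(t)}}(\delta)\ge v_i^\delta(t)$, so Theorem \ref{theo-Erschler} applied with $s=\delta$ and the measures $\zeta_{1,v_1^\delta(t)}, \zeta_{2,v_2^\delta(t)}$ gives $\Lambda_{1,G,\zeta_{v(t)}}(v)\ge \delta/K$ for every $v\le [v_2^\delta(t)]^{v_1^\delta(t)/K}=v(t)$, and in particular at $v=v(t)$.

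Finally, combining the two parts yields the ``in particular'' bound with $c=1/2$. The Dirichlet form comparison translates into $\Lambda_{2,G,\mu}(v)\ge t^{-1}\Lambda_{2,G,\zeta_{v(t)}}(v)$ for every $v$; evaluating at $v(t)$ and applying the Cheeger-type inequality (\ref{Cheeger}), namely $\Lambda_{2,\phi}\ge \tfrac{1}{2}\Lambda_{1,\phi}^2$, to $\phi=\zeta_{v(t)}$ gives $\Lambda_{G,\mu}(v(t))\ge \tfrac{1}{2t}\bigl(\Lambda_{1,G,\zeta_{v(t)}}(v(t))\bigr)^2\ge \tfrac{1}{2t}(\delta/K)^2$. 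The only real work is the sectionwise bookkeeping in the first step, where one has to keep careful track of how scalar Dirichlet-form comparisons on the factors lift to the full wreath product; the remaining arguments are direct invocations of Theorem \ref{theo-Erschler} and of the Cheeger-type bound (\ref{Cheeger}).
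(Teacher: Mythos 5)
Your proof is correct and follows essentially the same route as the paper's: the Dirichlet form comparison lifts sectionwise from the factors to the wreath product, the $\Lambda_1$ bound is a direct application of Erschler's theorem (Theorem \ref{theo-Erschler}), and the ``in particular'' bound follows by combining the form comparison with the Cheeger-type inequality (\ref{Cheeger}). The paper's own proof states these three steps tersely; you have supplied the bookkeeping the paper leaves implicit.

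One minor caveat worth flagging: the assertion that $\Lambda_{1,H_i,\zeta_i}(v_i^\delta(t))\ge\delta$ is \emph{equivalent} to $\Lambda^{-1}_{1,H_i,\zeta_i}(\delta)\ge v_i^\delta(t)$ is not literally correct at boundary cases where the profile is flat and equals $\delta$ on an interval to the left of $v_i^\delta(t)$; what one truly needs is that $v(t)$ sits inside the admissible range in Erschler's statement, which does follow (either by the implication in the relevant direction or by passing to $s<\delta$ and letting $s\uparrow\delta$). The paper glosses over the same point, so this does not affect the verdict.
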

\begin{proof} The hypotheses on $\mathcal E_{H_i,\mu_i}$ 
immediately imply that $t \mathcal E_{G,\mu}\ge \mathcal E_{G,\zeta_{v(t)}}$.
The lower bound on $\Lambda_{1,G,\zeta_{v(t)}}$ for the given volume $v(t)$ follows
from Erschler 's result stated in Theorem \ref{theo-Erschler}.
\end{proof}

This proposition will allow us to treat a great variety of examples.
To illustrate how this proposition work we treat two simple examples.

\begin{exa} On $\mathbb Z$, consider the measure $\mu_\alpha$ given by
$$\mu_\alpha(z)=c_\alpha(1+|z|)^{-\alpha-1}.$$
What is the behavior of $\mu^{(n)}(e)$ if 
$\mu=\frac{1}{2}(\mu_{\alpha_1} +\mu_{\alpha_2})$ on $\mathbb Z\wr \mathbb Z$
where $\mu_{\alpha_1}$ is supported on the base and $\mu_{\alpha_2}$ 
on the lamp above the identity of the base?

As noted in the Appendix section, on $\mathbb Z$ and for any $r>0$ we have
$$\mathcal E_{\mathbf u_r} \le C_\alpha r^\alpha \mathcal E_{\mu_\alpha}
\mbox{ and }   \Lambda_{1,\mathbb Z, \mathbf u_{r}}(r) \ge 1/2.$$ 
In other words, for any $t>0$ and $v_i(t)\simeq t^{1/\alpha_i}$, we have
$$\mathcal E_{\mathbf u_ {v_i(t)}} \le t \mathcal E_{\mu_{\alpha_i}}
\mbox{ and }   \Lambda_{1,\mathbb Z, \mathbf u_{v_i(t)}}(v_i(t)) \ge 1/2.$$ 
By Proposition \ref{pro-wreathup}, on $G=\mathbb Z\wr \mathbb Z$,
$$t \mathcal E_{G,\mu}\ge \mathcal E_{G, \zeta_{v(t)}} \mbox{ and }
\Lambda_{1,G,\zeta_{v(t)}}(v(t))\ge \frac{1}{2K}$$
where
$$v(t)\simeq \exp( t^{1/\alpha}\log t) \mbox{ and } \zeta_{G,v(t)}=
\frac{1}{2}(\mathbf u_{v_1(t)}+\mathbf u_{v_2(t)}).$$

It follows that
$$\Lambda_{2,\mathbb Z\wr \mathbb Z,\mu}(\exp(a t^{1/\alpha_1} \log t))\ge 1/t,$$
equivalently
$$\Lambda_{2,\mathbb Z\wr \mathbb Z,\mu}(v)\gtrsim \left(\frac{\log\log t}{\log t}
\right)^{\alpha_1}.$$
By Theorem \ref{th-Coulhon1}, this gives
$$\mu^{(n)}(e)\le \exp( - n^{\frac{1}{1+\alpha_1}}(\log n)^{\frac{\alpha_1}{1+\alpha_1}}).$$
Theorem \ref{theo-wreathlow}  
provides a matching lower bound (see also \cite{SCZ-dv1}). 

It is instructive to see what happens in this example if one applies 
directly Erschler F\o lner function results and Cheeger's inequality to obtain 
lower bound on $\Lambda_2$ and an upper bound on $\mu^{(n)}(e)$.
By Theorem \ref{th-A-pol-Lambda}, we know that at least for $\alpha \neq 1$
$$\Lambda_{1,\mathbb Z,\mu_\alpha}(v)\simeq \left\{
\begin{array}{cc} v^{-1}& \mbox{ if } 
\alpha\in (1,2)\\v^{-\alpha} & \mbox{ if } \alpha\in (0,1).\end{array}\right. $$
By \ref{theo-Erschler}, if $0<\alpha_1\neq 1$,
this implies (the value of $\alpha_2\in (0,2)$ 
does not matter)
$$\Lambda_{1,\mathbb Z\wr \mathbb Z,\mu}(v)\gtrsim \left\{
\begin{array}{cc} \frac{\log\log v}{\log v}& \mbox{ if } \alpha_1\in (1,2)\\
\left(\frac{\log\log v}{\log v}\right)^{\alpha_1} & \mbox{ if } 
\alpha_1\in (0,1).\end{array}\right. $$
In fact, these lower bounds admit  matching upper bounds.
Now, a lower bound on $\Lambda_{2,\mathbb Z\wr \mathbb Z,\mu}$ 
can be derived since
$\Lambda_{2,\mathbb Z\wr \mathbb Z,\mu} \gtrsim 
\Lambda_{1,\mathbb Z\wr \mathbb Z,\mu}^2.$ However this produces 
a lower bound that is significantly weaker than the one obtained 
above using Proposition \ref{pro-wreathup}.
\end{exa}

\begin{exa} Consider the wreath product 
$G=H\wr \mathbb Z$ where $H$ is a polycyclic group of 
exponential volume growth. On this group, we consider the measure
$\mu=\frac{1}{2}(\phi+\mathbf u)$ where $\phi$ is the measure on $\mathbb Z$ given
by $\phi(z)= c(1+|z|)^{-3}$ and $\mathbf u$ 
is the uniform measure on a finite symmetric generating set in $H$ 
containing the identity.  Recall that $\Phi_H(n)\simeq \exp(-n^{1/3})$,
$\Lambda_{2,H}(v)\simeq (\log(e+ v))^{-2}$ and 
$\Lambda_{1,H}(v)\simeq (\log (e+v))^{-1}$. Further,
$\Lambda_{1,H, \mathbf u_{r(2v)}}(v)\ge 1/2$ and $r(v)\simeq \log v $ 
since the volume function on $H$ has exponential growth. Also,
by the universal pseudo-Poincar\'e inequality for finitely supported symmetric
measure and associated word-length, we have
$$ Cr(2v)^2 \mathcal E_{H,\mathbf u_H}\ge \mathcal E_{H,\mathbf u_{r(2v)}}.$$

Next, note that the measure $\phi$ on $\mathbb Z$ sits in between the 
domains of attraction of symmetric stable law with parameter 
$\alpha\in (0,2)$ and the classical Gaussian domain of attraction. 
It is well-known (and it follows from 
Theorems \ref{th-Coulhon1}-\ref{th-A-pol-Lambda}) that 
 $\phi^{(n)}(0)\simeq (n\log n)^{-1/2}$,
$\Lambda_{2,\mathbb Z,\phi}(v)\simeq (1+v)^{-2}\log (e+v)$ and 
$\Lambda_{1,\mathbb Z,\phi}(v)\simeq (1+v)^{-1}$. We also have
$\Lambda_{1,\mathbb Z,\mathbf u_{r(2v)}}(v)\ge 1/2.$
Further, we have the 
pseudo-Poincar\'e inequality
$$\sum_{x\in \mathbb Z}|f(xy)-f(x)|^2\phi(y)\le 
C |y|^2(\log(e+|y|))^{-1}\mathcal E_{\mathbb Z,\phi}(f,f).$$

Applying Proposition \ref{pro-wreathup} with $H_1=\mathbb Z$, $\mu_1=\phi$, 
$H_2=H$, $\mu_2=\mathbf u$, the above data leads to 
$$v_1(t)\simeq (t\log t)^{-1/2},\;\;v_2(t)\simeq \exp( t^{1/2})$$
and
$$v(t)\simeq \exp(t(\log t)^{1/2}).$$
This gives
$$\Lambda_{2,G,\mu}(v)\gtrsim  \frac{(\log \log v)^{1/2}}{\log v}.$$
For comparison, we note that Theorem \ref{theo-Erschler}, gives
$$\Lambda_{1,G,\mu}(v)\gtrsim  \frac{1}{(\log v)^{1/2}}.$$
These two lower bounds can be complemented by matching upper bounds
using Theorem \ref{theo-wreathlow}.
\end{exa}

It is worth noting that Proposition \ref{pro-wreathup}  admits a 
version that leads to 
good lower bound for the $p$-isoperimetric profile $\Lambda_{p}$ 
on wreath products.  The proof is the same.

\begin{pro}\label{pro-pwreathup}
Let $\mu_i$ be a symmetric probability measures on $H_i$, $i=1,2$.
Fix $\delta>0$ and $p\in [1,\infty)$.
Assume that for each $t>0$ we can find $v^\delta_i(t)>0$  and symmetric 
probability measures $\zeta_{i,v_i}$ on $H_i$, $i=1,2$, such that
\begin{equation}
t\mathcal E_{p,H_i,\mu_i}\ge \mathcal E_{p,H_i,\zeta_{i,v^\delta_i(t)}} \mbox{ and }
\Lambda_{1,H_i,\zeta_{i,v^\delta_i(t)}}(v^\delta_i(t))\ge \delta.
\end{equation}
Then, for the measure $\mu=\frac{1}{2}(\mu_1+\mu_2)$ on $G=H_2\wr H_1$ 
and any $t>0$, we have
$$t\mathcal E_{p,G,\mu}\ge \mathcal E_{p,G,\zeta_{v(t)}} \mbox{ and }
\Lambda_{1,G,\zeta_{v(t)}}(v(t))\ge \delta/K$$ 
where $t\mapsto v(t)$ and the probability measure 
$\zeta_{v(t)}$ on $G$ are given by
$$v(t)= [v_2^\delta(t)]^{v_1^\delta(t)/K} \mbox{ and } \zeta_{v(t)}=\frac{1}{2}(\zeta_{1,v_1^\delta(t)}+\zeta_{2,v_2^\delta(t)}).$$
\end{pro}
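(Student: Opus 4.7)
The plan is to adapt, mutatis mutandis, the proof of Proposition \ref{pro-wreathup}. Both assertions factor into the same two ingredients: a fiberwise Dirichlet-form comparison and an appeal to Erschler's $L^1$-isoperimetric inequality (Theorem \ref{theo-Erschler}). The only difference is that the role of the quadratic Dirichlet form $\mathcal E_2$ is now played by its $p$-analogue $\mathcal E_p$, and this substitution turns out to be harmless because the comparison argument works pointwise on the increments $f(gy)-f(g)$ and is insensitive to the exponent under the absolute value.

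For the Dirichlet-form inequality $t\mathcal E_{p,G,\mu}\ge \mathcal E_{p,G,\zeta_{v(t)}}$, I would use linearity of $\mathcal E_p$ in the driving measure to write $2\mathcal E_{p,G,\mu}=\mathcal E_{p,G,\mu_1}+\mathcal E_{p,G,\mu_2}$, and likewise for $\zeta_{v(t)}$. It then suffices to prove $t\mathcal E_{p,G,\mu_i}\ge \mathcal E_{p,G,\zeta_{i,v_i^\delta(t)}}$ for $i=1,2$. Because each $\mu_i$, embedded into $G=H_2\wr H_1$ via the canonical homomorphism, is supported on group elements which alter only the $H_i$-coordinate, the form $\mathcal E_{p,G,\mu_i}(f)$ decomposes as a sum over fibers of the one-factor form $\mathcal E_{p,H_i,\mu_i}$ applied to the restriction of $f$ to those fibers: for $i=1$ the fiber is indexed by the full lamp configuration $\eta\in (H_2)_{H_1}$ and one integrates $\mathcal E_{p,H_1,\mu_1}(f(\eta,\cdot))$ over $\eta$; for $i=2$ the fiber is indexed by the base point $x\in H_1$ together with the lamp configuration off $x$, and the varying coordinate is the lamp at $x$. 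Applying the hypothesis $t\mathcal E_{p,H_i,\mu_i}\ge \mathcal E_{p,H_i,\zeta_{i,v_i^\delta(t)}}$ fiberwise and summing gives the desired inequality on $G$.

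For the second assertion, I would feed the measure $\zeta_{v(t)}=\frac{1}{2}(\zeta_{1,v_1^\delta(t)}+\zeta_{2,v_2^\delta(t)})$ directly into Theorem \ref{theo-Erschler} with $s=\delta$. The hypothesis $\Lambda_{1,H_i,\zeta_{i,v_i^\delta(t)}}(v_i^\delta(t))\ge \delta$, combined with the monotonicity of $\Lambda_{1,H_i}$ and the definition of the right-continuous inverse, yields $\Lambda^{-1}_{1,H_i,\zeta_{i,v_i^\delta(t)}}(\delta)\ge v_i^\delta(t)$. Erschler's inequality then gives $\Lambda_{1,G,\zeta_{v(t)}}(v)\ge \delta/K$ for every $v\le (v_2^\delta(t))^{v_1^\delta(t)/K}=v(t)$, which is exactly the claim. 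Note that only the $L^1$-profile of $\zeta_{v(t)}$ enters here, so the parameter $p$ plays no role in this half of the argument.

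The only step requiring any care is the fiberwise decomposition of $\mathcal E_{p,G,\mu_i}$, which is the sole place where the wreath-product structure enters directly. I expect this to be genuinely routine, exactly parallel to the $p=2$ case treated in Proposition \ref{pro-wreathup}; no new difficulty appears because the decomposition is an identity term-by-term before any convexity or integration. Consequently the main obstacle is essentially notational rather than mathematical, which is presumably why the author indicates that ``the proof is the same.''
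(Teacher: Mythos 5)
Your proof is correct and follows exactly the approach the paper indicates (the paper's proof of Proposition \ref{pro-pwreathup} is literally ``the proof is the same'' as that of Proposition \ref{pro-wreathup}, whose proof consists of the two one-sentence observations you spell out). You correctly identify the two ingredients — linearity of $\mathcal E_p$ in the driving measure together with the fiberwise decomposition to reduce to the one-factor hypotheses, and the direct application of Theorem \ref{theo-Erschler} with $s=\delta$ via the inequality $\Lambda^{-1}_{1,H_i,\zeta_{i,v_i^\delta(t)}}(\delta)\ge v_i^\delta(t)$ — and both steps are carried out accurately.
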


We now state a theorem that uses the iterative nature of Proposition 
\ref{pro-wreathup}. Consider a  sequence $(H_i)_1^m$ of finitely 
generated groups. Since taking wreath product is neither commutative nor 
associative, this sequence gives rise to many different iterated wreath 
product including $H_m\wr (H_{m-1}\wr (\cdots (H_2\wr H_1)\cdots))$
and $(\cdots (H_m\wr H_{m-1})\wr \cdots H_2)\wr H_1$. Let $\mathfrak B$
be a symbol of length $m$ describing a possible bracketing and
$W_\mathfrak B (H_m,\dots,H_1)$ be the corresponding wreath product. 
This can be define inductively
with $(\cdot)$ representing the bracketing of one single group, 
$( \wr )$, representing the bracketing of groups 
(i.e.   gives $H_2\wr H_1$). Inductively, if $\mathfrak B_1, \mathfrak B_2$ 
are such symbols, then $\mathfrak B=(\mathfrak B_2\wr \mathfrak B_1)$ is
also such a symbol and 
$$W_\mathfrak B(H_m,\dots H_1)=W_{\mathfrak B_2}(H_m,\dots, H_{m_1+1})\wr 
W_{\mathfrak B_1}(H_{m_1},\dots, H_1).$$
Note that the length of $\mathfrak B$ is defined inductively as the sum of 
the length of $\mathfrak B_1,\mathfrak B_2$ and length of $(\cdot)$ equal $1$.
We can now introduce a similar operation on sequences of numbers 
$(v_1,\dots, v_m)$ by setting 
$$W_{(\cdot)}(v)=v,\;W_{(\cdot\wr \cdot)}(v_2,v_1)= v_2^{v_1/K} $$ and, 
if $\mathfrak B= (\mathfrak B_2\wr \mathfrak B_1)$ as above,
$$W_{\mathfrak B}(v_m,\dots, v_1)=
W_{\mathfrak B_2}(v_m,\dots, v_{m_1+1})^{
W_{\mathfrak B_1}(v_{m_1},\dots, v_1)/K}.
$$
Here $K$ is the constant provided by Erschler's theorem, i.e., Theorem 
\ref{theo-Erschler}.

Similarly, given probability measures $\mu_i$ on $H_i$, $1\le i\le m$,
and $\mathfrak B=(\mathfrak B_2\wr \mathfrak B_1)$
define $\mu_\mathfrak B$ to be the probability measure on 
$W_{\mathfrak B}(H_m,\dots,H_1)$ define inductively by
$$\mu_{\mathfrak B}=\frac{1}{2}\left(\mu_{\mathfrak B_2}+\mu_{\mathfrak B_1}\right)
$$ 
where $\mu_{\mathfrak B_2}$ is understood as a probability measure
on $W_{\mathfrak B}(H_m,\dots,H_1)$ supported on the copy of 
$W_{\mathfrak B_2}(H_m,\dots,H_{m_1+1})$ above the identity element of 
$W_{\mathfrak B_1}(H_{m_1},\dots,H_1)$ and $\mu_{\mathfrak B_2}$ is
a probability measure on $W_{\mathfrak B}(H_m,\dots,H_1)$ supported on  
$W_{\mathfrak B_1}(H_{m_1},\dots,H_1)$.  For instance, given $\mu_i, H_i$, 
$1\le i\le 3$ and $\mathfrak B=((\cdot\wr\cdot)\wr (\cdot))$,
$\mu_{\mathfrak B}$ is a measure on $(H_3\wr H_2)\wr H_1$ and is equal to
$$\mu_{\mathfrak B}=\frac{1}{2}\left( \frac{1}{2}(\mu_3+\mu_2) +\mu_1\right)$$ 
where $\frac{1}{2}(\mu_3+\mu_2)$ is the measure on $H_3\wr H_2$ 
considered in Theorem \ref{th-wreathlow} and Proposition \ref{pro-wreathup}.
In some instance, it is useful to write
\begin{equation}\label{muB}
\mu_{\mathfrak B}=\nu_{\mathfrak B,\mu_m,\dots,\mu_1}
\end{equation}
to specify the measures used in the construction.

\begin{theo}\label{th-longwreath}
Let $\mu_i$ be a symmetric probability measures on $H_i$, $1\le i\le m$.
Fix $\delta>0$ and $p\ge 1$. 
assume that for each $t>0$ we can find $v^\delta_i(t)>0$  and symmetric 
probability measures $\zeta_{i,v_i}$ on $H_i$, $1\le i \le m$, such that
\begin{equation}
t\mathcal E_{p,H_i,\mu_i}\ge \mathcal E_{p,H_i,\zeta_{i,v^\delta_i(t)}} \mbox{ and }
\Lambda_{1,H_i,\zeta_{i,v^\delta_i(t)}}(v^\delta_i(t))\ge \delta.
\end{equation}
Fix a symbol $\mathfrak B$ of length $m$ as above. Then, for any $t>0$,  
the measure $\mu=\mu_\mathfrak B$ on $G=W_{\mathfrak B}(H_m,\dots,H_1)$ 
satisfies
$$t\mathcal E_{p,G,\mu}\ge \mathcal E_{p,G,\zeta_{v(t)}} \mbox{ and }
\Lambda_{1,G,\zeta_{v(t)}}(v(t))\ge \delta/K^m$$ 
where $t\mapsto v(t)$ and the probability measure 
$\zeta_{v(t)}$ on $G$ are given by
$$v(t)= W_{\mathfrak B}(v^\delta_m(t),\dots,v^\delta_1(t)) \mbox{ and } 
\zeta_{v(t)}=\nu_{\mathfrak B,\zeta_{m,v_m^\delta(t)},\dots,
\zeta_{1,v_1^\delta(t)}}.$$
In particular,
$$\Lambda_{p,G,\mu}(v(t))\ge \frac{c(1,p)}{t}\left(\frac{\delta}{K^m}\right)^p.$$
\end{theo}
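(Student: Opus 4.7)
The plan is to induct on the length $m$ of the bracketing symbol $\mathfrak{B}$, with Proposition \ref{pro-pwreathup} serving as the inductive engine and the compatibility of the recursive definitions of $W_{\mathfrak{B}}(\cdot)$, $\nu_{\mathfrak{B},\cdot}$ and $\mu_{\mathfrak{B}}$ (see (\ref{muB})) ensuring that the output at each step matches the claimed formulas. The base case $m=1$ is essentially trivial: here $\mathfrak{B}=(\cdot)$, $W_{\mathfrak{B}}(H_1)=H_1$, $\mu_{\mathfrak{B}}=\mu_1$, $v(t)=v_1^\delta(t)$, $\zeta_{v(t)}=\zeta_{1,v_1^\delta(t)}$, and the bound $\Lambda_{1,H_1,\zeta_{1,v_1^\delta(t)}}(v_1^\delta(t))\ge \delta/K^1$ is weaker than the $\delta$-bound in the hypothesis since $K\ge 1$.

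For the inductive step, write $\mathfrak{B}=(\mathfrak{B}_2\wr\mathfrak{B}_1)$ with $m_j:=\mathrm{length}(\mathfrak{B}_j)\ge 1$ and $m_1+m_2=m$. Put $G_j=W_{\mathfrak{B}_j}(H_\bullet,\dots,H_\bullet)$ and $G=G_2\wr G_1$, so that $\mu_{\mathfrak{B}}=\frac{1}{2}(\mu_{\mathfrak{B}_2}+\mu_{\mathfrak{B}_1})$ on $G$ by (\ref{muB}). Applying the inductive hypothesis separately to $\mathfrak{B}_1$ and $\mathfrak{B}_2$ (with the original data $(H_i,\mu_i,\zeta_{i,v_i^\delta(t)})$ restricted to the respective index ranges) supplies symmetric probability measures $\zeta^{(j)}_{v_j(t)}$ on $G_j$ with $v_j(t)=W_{\mathfrak{B}_j}(v_\bullet^\delta(t))$ satisfying
$$t\mathcal{E}_{p,G_j,\mu_{\mathfrak{B}_j}}\ge \mathcal{E}_{p,G_j,\zeta^{(j)}_{v_j(t)}},\qquad \Lambda_{1,G_j,\zeta^{(j)}_{v_j(t)}}(v_j(t))\ge \delta/K^{m_j}.$$
Set $\delta':=\delta/K^{\max(m_1,m_2)}$; both $L^1$-profile lower bounds are then at least $\delta'$ (since $K\ge 1$). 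Proposition \ref{pro-pwreathup}, applied to the pair $(G_2,G_1)$ with base measures $(\mu_{\mathfrak{B}_2},\mu_{\mathfrak{B}_1})$, test measures $\zeta^{(j)}_{v_j(t)}$, volumes $v_j(t)$ and common constant $\delta'$, produces
$$v(t):=v_2(t)^{v_1(t)/K},\qquad \zeta_{v(t)}:=\tfrac{1}{2}\bigl(\zeta^{(2)}_{v_2(t)}+\zeta^{(1)}_{v_1(t)}\bigr),$$
together with $t\mathcal{E}_{p,G,\mu_{\mathfrak{B}}}\ge \mathcal{E}_{p,G,\zeta_{v(t)}}$ and $\Lambda_{1,G,\zeta_{v(t)}}(v(t))\ge \delta'/K=\delta/K^{\max(m_1,m_2)+1}\ge \delta/K^m$, the last inequality being the arithmetic fact $\max(m_1,m_2)+1\le m_1+m_2=m$ valid whenever $m_1,m_2\ge 1$. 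Unrolling the recursions confirms that $v(t)=W_{\mathfrak{B}}(v^\delta_m(t),\dots,v^\delta_1(t))$ and $\zeta_{v(t)}=\nu_{\mathfrak{B},\zeta_{m,v_m^\delta(t)},\dots,\zeta_{1,v_1^\delta(t)}}$, closing the induction.

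The final $\Lambda_{p,G,\mu}(v(t))$ estimate is then immediate by combining the two conclusions just established: for any $f$ supported in a set of cardinality at most $v(t)$ with $\|f\|_p=1$,
$$t\mathcal{E}_{p,G,\mu}(f,f)\ge \mathcal{E}_{p,G,\zeta_{v(t)}}(f,f)\ge \Lambda_{p,G,\zeta_{v(t)}}(v(t))\ge c(1,p)(\delta/K^m)^p,$$
where the last inequality is the Cheeger-type inequality (\ref{Cheegerpq}) with $q=p$. Taking the infimum over such $f$ yields the claimed bound $\Lambda_{p,G,\mu}(v(t))\ge c(1,p)t^{-1}(\delta/K^m)^p$. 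The main obstacle is not conceptual but purely notational: at each induction step one must verify that the recursive formulas defining $W_{\mathfrak{B}}$, $\mu_{\mathfrak{B}}$ and the auxiliary $\zeta_{v(t)}$ dovetail correctly with the two-factor output of Proposition \ref{pro-pwreathup}, and crucially that the constant $K$ appearing in the combinatorial definition of $W_{\mathfrak{B}}$ is genuinely the same Erschler constant from Theorem \ref{theo-Erschler} inherited through Proposition \ref{pro-pwreathup}; once this is pinned down, the only arithmetic fact required is $\max(m_1,m_2)+1\le m$ for $m_1,m_2\ge 1$, which makes the exponent $K^m$ sharp for the worst (very unbalanced) bracketings.
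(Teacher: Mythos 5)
Your proof is correct and follows exactly the intended route: induct on the length of $\mathfrak B$, feed the inductive hypothesis for each of the two sub-symbols into Proposition \ref{pro-pwreathup}, and use $\max(m_1,m_2)+1\le m_1+m_2$ together with the Cheeger inequality (\ref{Cheegerpq}) to close the step and derive the final $\Lambda_{p,G,\mu}$ bound. The paper states the theorem without a written proof, pointing only to the ``iterative nature'' of Proposition \ref{pro-wreathup}/\ref{pro-pwreathup}, and your argument is precisely that iteration carried out carefully.
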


\begin{exa} Let $p=2$. Assume that $H_i$ 
is a group of polynomial volume growth of degree $d_i$, $1\le i\le 4$.
On $H_i$, consider the measure 
$\mu_i(h)\asymp (1+|h|)^{-\alpha_i-d_i}$, $\alpha_i\in (0,2)$, $1\le i\le 4$.
The symbols $\mathfrak B$ of length four are $\mathfrak B_1=(((\cdot\wr \cdot)\wr \cdot)\wr\cdot)$, $\mathfrak B_2=((\cdot\wr (\cdot\wr \cdot))\wr\cdot)$, 
$\mathfrak B_3=((\cdot\wr \cdot)\wr (\cdot\wr\cdot))$,
$\mathfrak B_4=(\cdot\wr ((\cdot\wr \cdot)\wr \cdot))$ 
and  $\mathfrak B_5=(\cdot\wr ( \cdot\wr (\cdot\wr\cdot)))$.
Set $v_{\mathfrak B}=W_\mathfrak B(v_4,\dots,v_1)$. By inspection, we have
$$v_{\mathfrak B_1}(t)\simeq \exp\left(t^{\frac{d_1}{\alpha_1}+\frac{d_2}{\alpha_2}+\frac{d_3}{\alpha_3}}
\log t\right),\;\;
v_{\mathfrak B_2}(t)\simeq \exp_{[2]}\left(t^\frac{d_2}{\alpha_2}\log t\right),$$
$$v_{\mathfrak B_3}(t)\simeq \exp_{[2]}\left(t^{\frac{d_1}{\alpha_1}}\log t\right),\;\;
v_{\mathfrak B_4}(t)\simeq \exp_{[2]}\left(t^{\frac{d_1}{\alpha_1}+\frac{d_2}{\alpha_2}}
\log t\right),$$
and 
$$
v_{\mathfrak B_5}(t)\simeq \exp_{[3]}\left(t^{\frac{d_1}{\alpha_1}}\log t\right).$$
This gives
$$\Lambda_{W_{\mathfrak B_1},\mu_{\mathfrak B_1}}(v) \simeq 
\left(\frac{ \log \log v}{\log v}\right)^{1/(\frac{d_2}{\alpha_2}+\frac{d_3}{\alpha_3})},$$
$$\Lambda_{W_{\mathfrak B_2},\mu_{\mathfrak B_2}}(v) \simeq 
\left(\frac{ \log \log\log  v}{\log\log v}\right)^{\alpha_2/d_2},\;\;
\Lambda_{W_{\mathfrak B_3},\mu_{\mathfrak B_3}}(v) \simeq 
\left(\frac{ \log \log\log  v}{\log\log v}\right)^{\alpha_1/d_1},$$
$$\Lambda_{W_{\mathfrak B_4},\mu_{\mathfrak B_4}}(v) \simeq 
\left(\frac{ \log \log\log  v}{\log\log v}\right)^{1/(\frac{d_1}{\alpha_1}+\frac{d_2}{\alpha_2})},$$
and

$$\Lambda_{W_{\mathfrak B_5},\mu_{\mathfrak B_5}}(v) \simeq 
\left(\frac{ \log\log\log\log  v}{\log\log\log v}\right)^{\alpha_1/d_1}.$$

\end{exa}

\subsection{Comparison measures and applications} 
The main theorems stated in the previous sections require that, 
for any symmetric probability measure $\mu$, we exhibit a collection 
$\zeta_v$,  $v>0$, 
of spread-out symmetric probability measures with the property that
\begin{equation}\label{delta}
\Lambda_{1,\zeta_v}(v)\ge \delta \end{equation}
for some fixed $\delta\in (0,1)$ and such that we can control
$\mathcal E_{\zeta_v}$ in terms of $v$ and  $\mathcal E_\phi$. 
The following two theorems show that we can always produce 
such a collection of measures.  

The first of these two theorems apply to subordinated measures $\phi_f$. 
Namely, given a Bernstein function with L\'evy 
measure $\nu$ and $t>0$, set 
$$\nu_t(ds)= [\nu((t,\infty))]^{-1}
\mathbf 1_{(t,\infty)}\nu(ds).$$ That is, the measures $\nu_t$, $t>0$,
are the normalized tail measures of $\nu$. Let 
\begin{equation} \label{cfnt}
c(t,f,n)=c(f_t,n)
\end{equation}
be the coefficients associated by (\ref{sub-cfn}) with the Bernstein function 
$$f_t(s)= \left(\int_{(0,\infty)}e^{-\tau}\nu_t(d\tau)\right)s+ 
\int_{(0,\infty)}(1-e^{-s\tau}) \nu_t(d\tau).$$ 
Note that $f_t(0)=0$, $f_t(1)=1$.

\begin{theo}[spread-out measures for subordinated measures]
Let $\phi$ be a symmetric probability measure on a countable 
group $G$ with Nash profile ${\mathcal N}_\phi$. 
Let $f$ be a Bernstein function with L\'evy measure $\nu$ and 
such that $f(0)=0$, $f(1)=1$. Then
the measures $\zeta^f_v =\phi_{f_t}$, $t={\mathcal N}_\phi(2v)$, $v>0$,  
satisfy 
\begin{equation}\label{f-large}
\Lambda_{1,\zeta^f_v}(v)\ge \Lambda_{2,\zeta^f_v}(v)\ge \frac{1}{2}\;\; \mbox{ and } 
\;\;\mathcal E_{\phi_f}\ge \nu\left(({\mathcal N}_\phi(2v),\infty)\right) 
\mathcal E_{\zeta^f_v}.
\end{equation}
\end{theo}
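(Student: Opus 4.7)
The plan is to derive both assertions by applying Schilling's inequality (Theorem~\ref{th-Sch}, specifically the estimate (\ref{key-Sch})) to the auxiliary subordinated measure $\phi_{f_t}$, together with the additivity of the Bernstein integral representation
\[
\mathcal{E}_{\phi_f}(u,u)= b\,\mathcal{E}_\phi(u,u)+\int_{(0,\infty)}\mathcal{E}_{h^\phi_s}(u,u)\,\nu(ds),
\]
where $\mathcal{E}_{h^\phi_s}(u,u)=\langle u,(I-H^\phi_s)u\rangle\ge 0$ is the Dirichlet form of the heat kernel at time $s$. Both estimates exploit the fact that $t=\mathcal{N}_\phi(2v)$ is precisely the cut-off that makes Schilling's integral collapse to a clean constant.

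First I would establish $\Lambda_{2,\zeta^f_v}(v)\ge 1/2$. Apply (\ref{key-Sch}) with $f$ replaced by $f_t$, whose Lévy measure is $\nu_t$. For any $u$ whose support has size at most $v$, Cauchy--Schwarz gives $\|u\|_1^2/\|u\|_2^2\le v$, so
\[
\frac{\mathcal{E}_{\phi_{f_t}}(u,u)}{\|u\|_2^2}\ge \frac{1}{2\mathcal{N}_\phi(2v)}\int_0^{\mathcal{N}_\phi(2v)}\nu_t((s,\infty))\,ds.
\]
The key observation is that $\nu_t$ is a probability measure concentrated in $(t,\infty)$, with $t=\mathcal{N}_\phi(2v)$; consequently $\nu_t((s,\infty))\equiv 1$ for all $s\le t$, the integral evaluates to $\mathcal{N}_\phi(2v)$, and the right-hand side collapses to $1/2$. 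Taking the infimum over admissible test functions yields $\Lambda_{2,\zeta^f_v}(v)\ge 1/2$, and the elementary Cheeger-type estimate (\ref{Cheeger}) then upgrades this to $\Lambda_{1,\zeta^f_v}(v)\ge \Lambda_{2,\zeta^f_v}(v)$.

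For the Dirichlet form comparison $\mathcal{E}_{\phi_f}\ge \nu((t,\infty))\mathcal{E}_{\zeta^f_v}$, I would discard the nonnegative drift contribution $b\,\mathcal{E}_\phi(u,u)$ and the nonnegative part of the Lévy integral over $(0,t]$, obtaining
\[
\mathcal{E}_{\phi_f}(u,u)\ge \int_{(t,\infty)}\mathcal{E}_{h^\phi_s}(u,u)\,\nu(ds) = \nu((t,\infty))\int_{(0,\infty)}\mathcal{E}_{h^\phi_s}(u,u)\,\nu_t(ds),
\]
where the last equality uses that $\nu_t$ is the renormalization of $\nu\mathbf{1}_{(t,\infty)}$. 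The integral on the right is precisely the Lévy part of the Bernstein representation of $\mathcal{E}_{\phi_{f_t}}=\mathcal{E}_{\zeta^f_v}$, which yields the claim.

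The main technical nuisance is the linear term $b_t\,s$ that $f_t$ carries in order to ensure $f_t(1)=1$ (so that $\phi_{f_t}$ is a bona fide probability measure); this term contributes an additional $b_t\mathcal{E}_\phi(u,u)$ to $\mathcal{E}_{\phi_{f_t}}$ that is not visible in the display above. Since $\nu((t,\infty))b_t=\int_{(t,\infty)}e^{-\tau}\,\nu(d\tau)\le e^{-t}\nu((t,\infty))$ is exponentially small in $t$, this contribution is dominated by what is retained in $\mathcal{E}_{\phi_f}$ coming from the $(0,t]$-portion of the Bernstein integral (equivalently, it can be absorbed into an absolute constant by a mild numerical adjustment). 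Verifying this absorption cleanly is the only step that is not essentially automatic and is therefore the main, if modest, obstacle; in the applications only the qualitative statement matters.
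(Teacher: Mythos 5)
Your argument follows the same two steps as the paper's own proof: the $\ge 1/2$ bound comes from applying Schilling's inequality (\ref{key-Sch}) to $f_t$ and observing that $\nu_t((s,\infty))\equiv 1$ for $s<t=\mathcal{N}_\phi(2v)$, so the integral collapses; and the Dirichlet-form comparison comes from $\nu_t\le \nu/\nu((t,\infty))$ applied term by term to the Bernstein representation. The drift-term issue you flag is genuine, and in fact the paper's own proof glosses over it (the displayed expression for $\mathcal{E}_{\zeta^f_v}$ there writes the linear coefficient incorrectly, and the implicit comparison of linear terms fails when $b=0$). However, your suggested absorption of $b_{f_t}\mathcal{E}_\phi$ into the $(0,t]$-portion of the L\'evy integral of $\mathcal{E}_{\phi_f}$ does \emph{not} work in general: if $\nu$ is concentrated on $(t,\infty)$ --- say a single Dirac mass at $\tau_0>t$ with $b$ small and $\nu(\{\tau_0\})>1$, which is compatible with $f(1)=1$ --- then that portion vanishes while $b_{f_t}\mathcal{E}_\phi$ does not, and indeed the stated inequality with constant exactly $1$ fails in that example. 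The absorption that works targets the $(t,\infty)$-portion itself: the operator $A=\cdot\ast(\delta_e-\phi)$ has spectrum in $[0,2]$, and a direct check of $e^{-\tau}\lambda\le 1-e^{-\tau\lambda}$ for $\lambda\in[0,2]$, $\tau\ge 1$ gives
\[
b_{f_t}\,\mathcal{E}_\phi(u,u)=\Bigl(\int e^{-\tau}\nu_t(d\tau)\Bigr)\mathcal{E}_\phi(u,u)\le \int\langle (I-e^{-\tau A})u,u\rangle\,\nu_t(d\tau)
\]
whenever $t=\mathcal{N}_\phi(2v)\ge 1$. Combined with the $(t,\infty)$-portion already retained, this yields $\mathcal{E}_{\phi_f}\ge \tfrac12\,\nu((t,\infty))\,\mathcal{E}_{\zeta^f_v}$. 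The extra factor $1/2$ (and the harmless restriction $t\ge 1$) are immaterial in the $\simeq$-calculus used everywhere downstream, so your qualitative conclusion is sound, but the route to it should be the spectral bound above rather than absorption into the small-$\tau$ tail.
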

\begin{exa} Assume that $f(s)=bs+\int_{(),\infty)}(1-e^{-ts})\nu(ds)$ 
is a complete Bernstein  with $\nu(ds)=g(s)ds$, $f(1)=1$ and
$f'(s)-b \sim \frac{s^\alpha}{s\ell(1/s)}$ at $0^+$ where $\ell $ 
is slowly varying at infinity and $\alpha\in [0,1]$.  
By \cite[(2.13)]{Bendikov2012}, we have
$g(s)\sim 1/\left[\Gamma(1-\alpha) s^{1+\alpha}\ell(s)\right]$ at infinity which 
implies $\nu((s,\infty))\sim c_\alpha f(1/s)$. This and Lemma \ref{lem-IsoNash}
means that (\ref{f-large})
is equivalent to the more explicit statement
$$ \Lambda_{1,\zeta^f_v}(v)\ge \Lambda_{2,\zeta^f_v}(v)\ge \frac{1}{2}\;\; \mbox{ and } 
\;\;\mathcal E_{\phi_f}\ge c_\alpha f(\Lambda_\phi(8v)/2)) 
\mathcal E_{\zeta^f_v}.$$
\end{exa}

\begin{proof} Write $f(s)= s +\int_{(0,\infty)}(1-e^{-st})\nu(dt)$.
First applies (\ref{key-Sch}) to $f_t$. Since, by definition,  
$\zeta^f_v=\phi_{f_t}$  and
$\nu_t(I)=0$ if the interval $I$ is contained in $(0,\mathcal N_\phi(2v))$, 
for any function $u$ with finite support, we have
$$\mathcal E_{\zeta^f_v}(u,)\ge \frac{1}{2} \|u\|_2^2.$$
That is $\Lambda_{\zeta^f_v}(v) \ge \frac{1}{2}$.

Next, write $A=\cdot*(\delta_e-\phi)$ (right-convolution  by $\delta_e-\phi$)
and
$$\mathcal E_{\zeta^f_v}(u,u) = 
\int_0^\infty \tau e^{-\tau}\nu_t(d\tau)
\mathcal E_\phi(u,u)
+\int_{(0,\infty)}\langle (I-e^{-\tau A})u,u\rangle \nu_t(d\tau)$$
Since 
$$\int_0^\infty \tau e^{-\tau}\nu_t(d\tau)\le 
\frac{1}{\nu((\mathcal N_\phi(2v),\infty))}
\int_0^\infty \tau e^{-\tau}\nu(d\tau), $$
and 
$$\int_{(0,\infty)}\langle (I-e^{-\tau A})u,u\rangle \nu_t(d\tau)
\le  \frac{1}{\nu((\mathcal N_\phi(2v),\infty))}
\int_{(0,\infty)}\langle (I-e^{-\tau A})u,u\rangle \nu(d\tau)$$
it follows that
$$\mathcal E_{\zeta^f_v}(u,u) \le 
\frac{1}{\nu((\mathcal N_\phi(2v),\infty))}\mathcal E_{\phi_f}(u,u).$$
\end{proof}

\begin{exa} Consider the Bernstein function $f_\alpha(s)=s^{\alpha}$,
$\alpha\in(0,1]$. In this case $\nu_\alpha(dt)=
\frac{\alpha}{\Gamma(1-\alpha)}t^{-\alpha-1}$. The construction above yields spread-out measures $\{\zeta^{f_\alpha}_{v}\}$ such that 
$$\Lambda_{2,\zeta^{f_\alpha}_v}(v)\ge \frac{1}{2}$$
and (using Lemma \ref{lem-IsoNash} for the last inequality)
$$
\mathcal{E}_{\phi_{f_\alpha}}\ge\frac{\mathcal N_\phi(2v)^{-\alpha}}{\Gamma(1-\alpha)}\mathcal{E}_{\zeta_{v}}\geq c_\alpha 
\Lambda_{2,\phi}(8v)^{\alpha}\mathcal{E}_{\zeta_{v}}.$$
\end{exa}

The next result apply to any symmetric probability measure $\phi$. 
For any fixed $\alpha\in (0,1)$ and $t>0$, consider the L\'evy measure
$$\nu^\alpha_t(ds)= \kappa(\alpha,t) \mathbf 1_{(t,2t)}(s) s^{-\alpha-1}ds, 
\;\;\kappa(\alpha,t)
= \alpha (1-2^{-\alpha})^{-1} t^\alpha$$
and
$b^\alpha_t= \kappa (\alpha,t) \int_t^{2t}e^{-u} u^{-\alpha-1}du.$  
Denote by $f^\alpha_t$ the Bernstein function $$f^\alpha_t(s)
=b^\alpha_t s +\int_{(0,\infty)}(1-e^{-su})\nu^\alpha_t(du)$$
and note that, by construction, $f^\alpha_t(0)=0$, $f^\alpha_t(1)=1$.
The Bernstein function $f^\alpha_t$ is a localized version of the classical 
Bernstein function $s\mapsto s^\alpha, \alpha\in (0,1)$. For the applications we have in mind, using any arbitrary fixed value of 
$\alpha\in 0,1)$ in the following theorem will be adequate.  
\begin{theo}[spread-out measures for $\phi$] \label{th-subtrick}
Let $\phi$ be a symmetric probability measure on a countable 
group $G$ with $L^2$-isoperimetric profile $\Lambda_\phi$. 
Fix $\alpha\in (0,1)$. Then
the measures $\zeta^\alpha_v =\phi_{f^\alpha_t}$, $t={\mathcal N}_\phi(2v)$, $v>0$,  satisfy 
\begin{equation}\label{phi-large}
\Lambda_{1,\zeta^\alpha_v}(v)\ge \Lambda_{2,\zeta^\alpha_v}(v)\ge \frac{1}{2}\;\; 
\mbox{ and } 
\mathcal E_{\phi}\ge c_\alpha \Lambda_\phi(8v) 
\mathcal E_{\zeta^\alpha_v}.
\end{equation}
\end{theo}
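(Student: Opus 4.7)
The plan is to mirror the proof of the preceding subordination theorem, exploiting the fact that, by construction of $\kappa(\alpha,t)$, the L\'evy measure $\nu^\alpha_t$ of $f^\alpha_t$ is a probability measure supported on $(t, 2t)$. The two conclusions in (\ref{phi-large}) then decouple cleanly: the isoperimetric lower bound follows from Schilling's inequality (\ref{key-Sch}), while the Dirichlet form comparison reduces via spectral calculus to a pointwise bound that exploits the concavity of $f^\alpha_t$.

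For the isoperimetric bound, the key observation is that, because $\nu^\alpha_t$ is a probability measure on $(t, 2t)$, one has $\nu^\alpha_t((s,\infty)) = 1$ for every $s \in [0, t)$. Setting $t = \mathcal{N}_\phi(2v)$ and applying (\ref{key-Sch}) to any $u$ with support of size at most $v$ (so that $\|u\|_1^2/\|u\|_2^2 \leq v \leq 2v$), one obtains
$$\frac{\mathcal{E}_{\zeta^\alpha_v}(u,u)}{\|u\|_2^2} \;\geq\; \frac{1}{2\mathcal{N}_\phi(2v)}\int_0^{\mathcal{N}_\phi(2v)} \nu^\alpha_t((s,\infty))\, ds \;=\; \frac{1}{2},$$
yielding $\Lambda_{2,\zeta^\alpha_v}(v) \geq 1/2$; the matching bound on $\Lambda_1$ then follows from the Cheeger inequality (\ref{Cheeger}).

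For the form comparison, I would use that $f^\alpha_t$, being a Bernstein function with $f^\alpha_t(0) = 0$, is concave, so $f^\alpha_t(\sigma) \leq (f^\alpha_t)'(0^+)\,\sigma$ for all $\sigma \geq 0$. The L\'evy--Khintchine representation gives $(f^\alpha_t)'(0^+) = b^\alpha_t + \int \tau\, \nu^\alpha_t(d\tau) \leq 1 + 2t$, using the easy estimate $b^\alpha_t \leq e^{-t} \leq 1$ and the fact that $\nu^\alpha_t$ is a probability measure supported on $(t, 2t)$. Combined with the key inequality $t = \mathcal{N}_\phi(2v) \leq 2/\Lambda_\phi(8v)$ from Lemma \ref{lem-IsoNash}, this produces a pointwise bound $\Lambda_\phi(8v)\, f^\alpha_t(\sigma) \leq c_\alpha^{-1}\, \sigma$ valid for all $\sigma \in [0, 2]$. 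Applied via spectral calculus to the self-adjoint operator $A = \cdot*(\delta_e - \phi)$ (whose spectrum lies in $[0,2]$) together with the identity $\mathcal{E}_{\zeta^\alpha_v}(u,u) = \langle f^\alpha_t(A) u, u\rangle$, this yields the claimed Dirichlet form inequality. The main subtlety is bookkeeping: verifying that the chosen normalizations $\kappa(\alpha,t)$ and $b^\alpha_t$ make $\nu^\alpha_t$ a probability measure and ensure $f^\alpha_t(0) = 0$, $f^\alpha_t(1) = 1$, so that Schilling's inequality (\ref{key-Sch}) applies directly with these specific choices.
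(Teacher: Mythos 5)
Your proposal is correct and follows essentially the same route as the paper's proof. Both arguments establish the isoperimetric bound by feeding the observation that $\nu^\alpha_t$ is a probability measure supported in $(t,2t)$ (so $\nu^\alpha_t((s,\infty))=1$ for $s<t$) into Schilling's inequality (\ref{key-Sch}), and both establish the Dirichlet form comparison by bounding $(f^\alpha_t)'(0^+)=b^\alpha_t+\int\tau\,\nu^\alpha_t(d\tau)\lesssim t$ and invoking $t=\mathcal N_\phi(2v)\le 2/\Lambda_\phi(8v)$; your pointwise concavity estimate $f^\alpha_t(\sigma)\le (f^\alpha_t)'(0^+)\sigma$ is exactly the paper's termwise bound $1-e^{-\tau\sigma}\le\tau\sigma$ integrated against $\nu^\alpha_t$, so the two arguments differ only in packaging.
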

\begin{proof} 
The proof of the first inequality in (\ref{phi-large}) 
is the same as in the case of (\ref{f-large}). For the 
Dirichlet form comparison, write again $A=\cdot*(\delta_e-\phi)$ 
and recall that $t^{-1}\langle (I-e^{-tA})u,u\rangle$
is an increasing function of $t$ with limit $\mathcal E_\phi(u,u)$. 
It follows that
\begin{eqnarray*}\mathcal E_{\zeta^\alpha_v}(u,u) &=&
b^\alpha_t \mathcal E_\phi(u,u)
+\int_{(0,\infty)}\langle (I-e^{-\tau A})u,u\rangle \nu^\alpha_t(d\tau)\\
&\le& \left(
b^\alpha_t 
+\int_{(0,\infty)}\tau \nu^\alpha_t(d\tau)\right)\mathcal E_\phi(u,u). 
\end{eqnarray*}
Since $b^\alpha_t\le e^{-t}$ and $\int_0^\infty \tau \nu^\alpha_t(d\tau)
= \frac{\alpha (2^{1-\alpha})}{(1-\alpha)(1-2^{-\alpha})}t$ and $t\ge 1$, 
this gives 
$$\mathcal E_{\zeta^\alpha_v} \le c_\alpha t \mathcal E_\phi.$$
The desired result (with a different $c_\alpha$) 
follows since, by Lemma \ref{lem-IsoNash}, 
$$t=\mathcal N_\phi(2v)\le 2/\Lambda_\phi(8v).$$
\end{proof}

Theorem \ref{th-subtrick} turns the statements of Section \ref{sec-wreath2} 
into very effective results by providing the needed hypotheses. In particular, Theorem \ref{th-WR} stated in the introduction follows 
immediately from Proposition \ref{pro-wreathup} and 
Theorem \ref{th-subtrick}. 
Similarly, the case $p=2$ of Theorem \ref{th-longwreath} and Theorem
\ref{th-subtrick} yields 
the following statement.

\begin{theo}\label{th-longwreath2}
Let $\mu_i$ be a symmetric probability measures on $H_i$, $1\le i\le m$.
Fix a symbol $\mathfrak B$ of length $m$ as in {\em Theorem
\ref{th-longwreath}}.  Then, for any $v,s>0$,  
the measure $\mu=\mu_\mathfrak B$ on $G=W_{\mathfrak B}(H_m,\dots,H_1)$ 
satisfies
$$\Lambda_{2,G,\mu}(v)\ge s/K^m \mbox{ for any }
v \le W_{\mathfrak B}(\Lambda^{-1}_{2,H_1,\mu_1}(s),
\dots,\Lambda^{-1}_{2,H_m,\mu_m}(s)).$$
\end{theo}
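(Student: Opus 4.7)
The plan is to reduce Theorem \ref{th-longwreath2} to the case $p=2$ of Theorem \ref{th-longwreath} by feeding in, for each ground measure $\mu_i$, the family of spread-out comparison measures manufactured by Theorem \ref{th-subtrick}.

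Fix $s>0$ and, for each $i\in\{1,\dots,m\}$, choose $v_i$ to be a small absolute-constant multiple of $\Lambda_{2,H_i,\mu_i}^{-1}(s)$, chosen so that the factor-of-$8$ shift in Theorem \ref{th-subtrick} still gives $\Lambda_{2,H_i,\mu_i}(8v_i)\ge s$. (This is possible because $\Lambda_{2,H_i,\mu_i}$ is non-increasing.) With any fixed $\alpha\in(0,1)$, Theorem \ref{th-subtrick} then yields symmetric probability measures $\zeta_i:=\zeta^\alpha_{i,v_i}$ on $H_i$ satisfying
\[
\Lambda_{1,H_i,\zeta_i}(v_i)\ge 1/2
\quad\text{and}\quad
\mathcal E_{\mu_i}\ge c_\alpha\Lambda_{2,\mu_i}(8v_i)\,\mathcal E_{\zeta_i}\ge c_\alpha s\,\mathcal E_{\zeta_i}.
\]
Setting $t=1/(c_\alpha s)$, this is precisely $t\mathcal E_{2,H_i,\mu_i}\ge \mathcal E_{2,H_i,\zeta_i}$, verifying the hypothesis of Theorem \ref{th-longwreath} with $p=2$, $\delta=1/2$, and $v_i^\delta(t)=v_i$.

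Applying Theorem \ref{th-longwreath} then gives a measure $\zeta_{v(t)}$ on $G=W_{\mathfrak B}(H_m,\dots,H_1)$ such that
\[
t\mathcal E_{2,G,\mu}\ge \mathcal E_{2,G,\zeta_{v(t)}}
\quad\text{and}\quad
\Lambda_{1,G,\zeta_{v(t)}}(v(t))\ge \frac{1}{2K^m},
\]
where $v(t)=W_{\mathfrak B}(v_m,\dots,v_1)$. Cheeger's inequality (\ref{Cheeger}) applied to $\zeta_{v(t)}$ upgrades the $L^1$-profile bound to $\Lambda_{2,G,\zeta_{v(t)}}(v(t))\ge 1/(8K^{2m})$, and the Dirichlet form domination $t\mathcal E_{2,G,\mu}\ge \mathcal E_{2,G,\zeta_{v(t)}}$ transfers this back to $\mu$:
\[
\Lambda_{2,G,\mu}(v(t))\ge \frac{1}{t}\,\Lambda_{2,G,\zeta_{v(t)}}(v(t))\gtrsim \frac{s}{K^{2m}}.
\]
Since $v_i$ was chosen proportional to $\Lambda_{2,H_i,\mu_i}^{-1}(s)$ and the $W_{\mathfrak B}$ operation is monotone in each coordinate, $v(t)$ coincides, up to constants, with $W_{\mathfrak B}(\Lambda_{2,H_1,\mu_1}^{-1}(s),\dots,\Lambda_{2,H_m,\mu_m}^{-1}(s))$, and the claim follows upon interpreting the $K$ in the statement as an enlarged absolute constant absorbing the various multiplicative losses.

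The main obstacle is purely bookkeeping: Theorem \ref{th-subtrick} is phrased via the Nash profile and the shifted argument $\Lambda_{2,\mu_i}(8v_i)$, while Theorem \ref{th-longwreath2} is phrased via $\Lambda_{2,\mu_i}^{-1}(s)$, so one must convert between volume and spectral parametrizations carefully and absorb the factors $c_\alpha$, the $8$ coming from Lemma \ref{lem-IsoNash}, and the Cheeger constant $\tfrac12$ into the final constants. Somewhat more significantly, the raw bound produced by the argument naturally scales as $s/K^{2m}$ rather than $s/K^m$ because of the final invocation of Cheeger's inequality; the cleanest reading of the statement is that its $K$ absorbs this additional power, and no analytic input beyond what is already contained in Theorems \ref{th-subtrick} and \ref{th-longwreath} is required.
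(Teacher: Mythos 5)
Your proposal is correct and follows exactly the route the paper intends: the paper's entire ``proof'' is the one-line remark that Theorem \ref{th-longwreath2} follows by combining the case $p=2$ of Theorem \ref{th-longwreath} with the spread-out comparison measures supplied by Theorem \ref{th-subtrick}, and you have spelled that combination out correctly, including the verification that $t=1/(c_\alpha s)$, $\delta=1/2$, $v_i^\delta(t)\approx \Lambda_{2,H_i,\mu_i}^{-1}(s)$ satisfy the hypotheses. Your observation that the argument literally produces a lower bound of order $s/K^{2m}$ (because of the $L^1\to L^2$ Cheeger step, which is already built into the last displayed inequality of Theorem \ref{th-longwreath}) rather than $s/K^m$ is a genuine point about the constant in the paper's statement; the $K$ in Theorem \ref{th-longwreath2} must be read as a possibly enlarged absolute constant, and similarly the harmless loss of a fixed multiplicative factor in each $v_i$ (from the $8v$ shift in Theorem \ref{th-subtrick}) is absorbed by enlarging the $K$ appearing inside the iterated-exponential $W_{\mathfrak B}$, since multiplying the $v_i$ by a constant does \emph{not} multiply $W_{\mathfrak B}(v_m,\dots,v_1)$ by a constant.
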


\section{Spread-out random walks on wreath products} \label{sec-wreathexa}
\setcounter{equation}{0}

This section provides a host of explicit examples where the behavior
of random walks associated with spread-out measures on wreath products 
can be computed. In particular, we obtain a variety of 
sharp estimates for $\Phi_{G,\rho}$
when $G$ is a wreath product (or an iterated wreath product) and $\rho$ is 
a moment function. 

\subsection{Groups where $\Lambda_G$ is controlled by volume growth}

We say that $\Lambda_G$ is controlled by volume growth if 
$\Lambda_G\simeq \mathcal W_G^{-2}$ where $\mathcal W_G(v)=\inf\{r: V_G(r)>v\}$. 
It is always true that  $\Lambda_G(v)\gtrsim \mathcal  W_G^{-2}$ 
(this follows from the $L^2$-version of the argument in \cite{CSCiso}, 
see the appendix for variations). Groups quasi-isometric to
polycyclic groups satisfy $\Lambda_G\simeq \mathcal W_G^{-2}$ and  
Tessera \cite[Theorem 4]{Tessera}
describes a large class of groups of exponential volume growth 
(Geometrically Elementary Solvable or GES groups) which satisfy 
$\Lambda_G\simeq \mathcal  W_G^{-2}$. In all these cases the volume growth function 
is of type $V_G(r)\simeq r^d$ or $V_G(r)\simeq \exp(r)$ and
$\Lambda_G(r)\simeq v^{-2/d}$ (equivalently $\Phi_G(n)\simeq n^{-d/2}$) 
or $\Lambda_G(v) \simeq (\log(1+ v))^{-2}$ (equivalently 
$\Phi_G(n)\simeq \exp(-n^{1/3})$), respectively. 

\begin{theo} \label{th-iterated}
Let $(H_i)$, $1\le i\le m$ be groups for which $\Lambda_{H_i}
\simeq \mathcal W_{H_i}^{-2}$. For each $i$, $1\le i\le m$, let 
$\mu_i(h)=c_i\sum_1^\infty 4^{\alpha_i k}\mathbf u_{H_i}(4^k)$ 
with $\alpha_i\in (0,2)$.  Referring to the notation of {\em Theorem 
\ref{th-longwreath}}, fix a wreath product symbol $\mathfrak B$ of length $m$
and consider the measure $\mu_{\mathfrak B}=\nu_{\mathfrak B,\mu_m,\dots,\mu_1}$
defined at {\em (\ref{muB})} on the wreath product 
$W_{\mathfrak B}(H_m,\dots,H_1)$.  Then the $\simeq$-class of 
$\Lambda_{\mu_{\mathfrak B}}$ can be computed and is described by 
{\em Theorem \ref{th-longwreath2}}. In particular, when $m=2$ and 
$\mu=\mu_{(\cdot\wr\cdot)}$ on $H_2\wr H_1$,
\begin{itemize}
\item If $V_{H_1}$ is exponential and $H_2$ non-trivial, 
$\mu^{(2n)}(e)\simeq \exp(-n/[\log n]^{\alpha_1}) $
\item If $V_{H_1}(r)\simeq r^{d_1}$ and $V_{H_2}(r)\simeq r^{d_2}$,
$$\mu^{(2n)}(e)\simeq \exp\left( 
-n^{d_1/(\alpha_1+d_1)}[\log n]^{\alpha_1/(\alpha_1+d_1)}\right).$$
\item If $V_{H_1}(r)\simeq r^{d_1}$ and $V_{H_2}(r)\simeq \exp(r)$,
$$\mu^{(2n)}(e)\simeq \exp\left( 
-n^{(\alpha_1+\alpha_2d_1)/(\alpha_1+\alpha_2d_1+\alpha_1\alpha_2)}\right).$$
\end{itemize}
\end{theo}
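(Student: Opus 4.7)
The plan is to first determine the $L^2$-profile $\Lambda_{2,H_i,\mu_i}$ on each factor $H_i$, then propagate these through the iterated wreath product using Theorems \ref{th-longwreath2} and \ref{theo-wreathlow}, and finally translate the resulting bound on $\Lambda_{2,W_\mathfrak B,\mu_\mathfrak B}$ into estimates on $\mu_\mathfrak B^{(2n)}(e)$ via Theorem \ref{th-Coulhon1} (for the upper bound) and Lemma \ref{lem-CG} (for the matching lower bound).

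The crucial input is Step~1. Since each $\mu_i$ is a scale-weighted mixture of uniform ball measures of stable-like type with index $\alpha_i\in(0,2)$, it carries finite weak $\rho_{\alpha_i}$-moment. I would obtain the upper bound $\Lambda_{2,H_i,\mu_i}(v)\lesssim\Lambda_{H_i}(v)^{\alpha_i/2}$ from Theorem \ref{pro-compphiG} applied with $p=2$ and $\rho=\rho_{\alpha_i}$, using $M_{2,\rho_{\alpha_i}}(t)\simeq t^{\alpha_i}$ (valid since $\alpha_i<2$) together with the hypothesis $\Lambda_{H_i}(v)^{-1/2}\simeq \mathcal W_{H_i}(v)$. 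For the matching lower bound, I would compare $\mu_i$ with the subordinated measure $\mathbf u_{H_i,\psi_{\alpha_i/2}}$ associated to the Bernstein function $\psi_{\alpha_i/2}(s)=s^{\alpha_i/2}$ and apply the subordination inequality (\ref{Lambda-f}); the appendix computations for radial stable-like measures cover the cases where the comparison is delicate. This yields the explicit inverses $\Lambda_{2,H_i,\mu_i}^{-1}(s)\simeq s^{-d_i/\alpha_i}$ in the polynomial-growth case and $\Lambda_{2,H_i,\mu_i}^{-1}(s)\simeq \exp(s^{-1/\alpha_i})$ in the exponential-growth case.

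Substituting these inverses into Theorem \ref{th-longwreath2} yields $\Lambda_{2,W_\mathfrak B,\mu_\mathfrak B}(v)\gtrsim s/K^m$ whenever $v\le W_\mathfrak B(\Lambda_{\mu_1}^{-1}(s),\dots,\Lambda_{\mu_m}^{-1}(s))$, and Theorem \ref{theo-wreathlow} applied iteratively yields a matching upper bound of the same form. For $m=2$ with $\mathfrak B=(\cdot\wr\cdot)$, the relation $v\simeq \left(\Lambda_{\mu_2}^{-1}(s)\right)^{\Lambda_{\mu_1}^{-1}(s)/K}$ becomes, respectively, $\log\log v\simeq s^{-1/\alpha_1}$ in Case~1, $\log v\simeq s^{-d_1/\alpha_1}\log(1/s)$ in Case~2, and $\log v\simeq s^{-(\alpha_1+\alpha_2 d_1)/(\alpha_1\alpha_2)}$ in Case~3. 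Inverting gives $\Lambda(v)\simeq (\log\log v)^{-\alpha_1}$, $\simeq (\log\log v/\log v)^{\alpha_1/d_1}$, and $\simeq (\log v)^{-\alpha_1\alpha_2/(\alpha_1+\alpha_2 d_1)}$ respectively. Feeding these into the Coulhon integral $t=\int_1^{1/\psi(t)}\frac{ds}{2s\Lambda(4s)}$ produces the stated exponents; for example, in Case~2 with $\beta=\alpha_1/d_1$ the integral gives $\log(1/\psi(t))\simeq t^{1/(\beta+1)}(\log t)^{\beta/(\beta+1)}= t^{d_1/(\alpha_1+d_1)}(\log t)^{\alpha_1/(\alpha_1+d_1)}$, matching the claim. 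Lemma \ref{lem-CG} combined with the corresponding upper bound on $\Lambda$ supplies matching lower bounds on $\mu_\mathfrak B^{(2n)}(e)$.

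The main obstacle is Step~1, and specifically the lower bound $\Lambda_{2,H_i,\mu_i}\gtrsim \Lambda_{H_i}^{\alpha_i/2}$ when $H_i$ has exponential volume growth: in the polynomial case the subordination comparison $\mu_i\simeq \mathbf u_{H_i,\psi_{\alpha_i/2}}$ is transparent from the density $\mu_i(x)\simeq (1+|x|)^{-\alpha_i-d_i}$, but in the exponential case one must appeal to the GES structure underlying the hypothesis $\Lambda_{H_i}\simeq \mathcal W_{H_i}^{-2}$ (or to appendix-style radial computations) to make the identification work. Once this comparison is in place, the remainder is bookkeeping through the wreath-product and Coulhon machinery already developed.
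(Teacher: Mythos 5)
Your high-level plan is exactly the intended one: estimate $\Lambda_{2,H_i,\mu_i}$ on each factor, push the estimates through Theorems \ref{th-longwreath2} and \ref{theo-wreathlow} to control $\Lambda_{2,W_{\mathfrak B},\mu_{\mathfrak B}}$ from both sides, then translate to $\mu_{\mathfrak B}^{(2n)}(e)$ via Theorem \ref{th-Coulhon1} (upper bound on return probability) and Lemma \ref{lem-CG} (lower bound), and your explicit computations of the inverses and the Coulhon integrals in the three $m=2$ cases are all correct.

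The one place where your route deviates from the paper's, and where it would actually fail if pursued as stated, is the lower bound in Step 1. You propose to compare $\mu_i$ with the subordinated measure $\mathbf u_{H_i,\psi_{\alpha_i/2}}$ and invoke (\ref{Lambda-f}). As the paper remarks after Theorem \ref{th-Lpsi}, on a group with displacement exponent $1/2$ (e.g.\ exponential volume growth, GES, polycyclic) the measure $\mathbf u_{\psi_\beta}$ has only weak $\rho_{\beta/2}$-moment, not weak $\rho_{2\beta}$-moment, and its Dirichlet form is \emph{not} comparable to that of the radial measure $\phi_{\alpha}=c_\alpha\sum 4^{-\alpha k}\mathbf u_{4^k}$ with $\alpha=2\beta$: subordinating $\mathbf u$ at ``time'' $n$ spreads mass at distance $\sqrt n$, while $\mathbf u_{4^k}$ spreads it at distance $4^k$. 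So the ``transparent'' identification you cite for polynomial growth genuinely breaks down in the exponential case, and the ``GES structure'' is not what saves it. The obstacle you flag is therefore real for the route you chose, but it is a detour: the paper does not use subordination for this step at all. Proposition \ref{pro-JLalpha} gives $\Lambda_{p,\phi_\alpha}\gtrsim\mathcal W^{-\alpha}$ on any group directly from the pseudo-Poincar\'e inequality (\ref{pseudoPalpha}) plus the truncation argument of \cite{CSCiso}; combined with the hypothesis $\Lambda_{H_i}\simeq\mathcal W_{H_i}^{-2}$ and the upper bound from Theorem \ref{pro-compphiG} (which you correctly identify), Theorem \ref{th-LW} (third item) yields $\Lambda_{2,H_i,\mu_i}\simeq\mathcal W_{H_i}^{-\alpha_i}$ uniformly, with no case distinction. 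Replace the subordination comparison by a direct citation of Theorem \ref{th-LW} and the proof is complete and matches the paper's.
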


Let $K$ be a finitely generated group which will be either finite,
of polynomial volume growth or of exponential volume growth and such that 
$\Phi_{K}(n)\simeq \exp(-n^{1/3})$. For instance, $K$ could be any polycyclic 
group. Let $H$ be a group of polynomial volume growth. 
In any of these cases, $\Phi_{K\wr H}$ is known 
(thanks to the results of \cite{PSCwp,Erschler2006}). 
In the first case ($K$ finite) $\Phi_{K\wr H}(n)\simeq \exp(-n^{d/(2+d)})$.
In the second case, $\Phi_{K,\wr H}(n)\simeq \exp(-n^{d/(2+d)}(\log n)^{2/(2+d)})$
and in the third case, $\Phi_{K,\wr H}(n)\simeq \exp(-n^{(1+d)/(3+d)})$.
In particular, Corollary \ref{cor-low-Phi1} applies to 
these groups and gives that for any slowly varying function $\rho$ 
as in (\ref{ellrho}) such that $\rho(t^a)\simeq \rho(t)$ for each $a>0$, 
we have  
$$\Phi_{K\wr H,\rho}(n)\simeq \exp(-n/\rho (n)).$$

The following two theorems provide the behavior of 
$\widetilde{\Phi}_{K\wr H, \rho}$ for  $\rho(s)=\rho_\alpha(s)=(1+s)^\alpha$, 
$\alpha \in (0,2)$ and for $\rho(s)$ regularly varying of index $2$. 
\begin{theo} \label{th-wr-Pol-alpha}
Let $H$ be a group of polynomial volume growth of degree $d$. 
\begin{enumerate}
\item If $K\neq \{e_K\}$ is finite, we have
$$\widetilde{\Phi}_{K\wr H,\rho_\alpha}(n)\simeq 
\exp\left(-n^{d/(\alpha+d)}\right).$$
\item If $K$ is not finite and has polynomial volume growth, we have
$$\widetilde{\Phi}_{K\wr H,\rho_\alpha}(n)\simeq 
\exp\left(-n^{d/(\alpha+d)} (\log n)^{\alpha/(\alpha+d)}\right)  .$$
\item If $K$ has exponential growth and satisfies $\Phi_{K}(n)\simeq 
\exp(-n^{1/3})$, we have
$$\widetilde{\Phi}_{K\wr H,\rho_\alpha}(n)\simeq 
\exp\left(-n^{(d+1)/(\alpha+d+1)}\right).$$
\end{enumerate}
\end{theo}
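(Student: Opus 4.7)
Each of the three asymptotic equivalences is proved by matching a lower bound on $\widetilde{\Phi}_{K\wr H,\rho_\alpha}(n)$ against an upper bound realized by a specific $\mu\in\widetilde{\mathcal S}_{K\wr H,\rho_\alpha}$. The lower bound follows from the second case of Theorem \ref{th-rho-low} applied to $G=K\wr H$, since the behavior of $\Phi_G$ in all three cases is known from \cite{PSCwp,Erschler2006}: $\Phi_G(n)\simeq \exp(-n^{d/(d+2)})$ in case (1), $\Phi_G(n)\simeq \exp(-n^{d/(d+2)}(\log n)^{2/(d+2)})$ in case (2), and $\Phi_G(n)\simeq \exp(-n^{(d+1)/(d+3)})$ in case (3). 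The upper bound is obtained by applying the $L^2$-wreath product estimate of Theorem \ref{theo-wreathlow} to an explicit measure of the form $\mu=\tfrac{1}{2}(\mu_H+\mu_K)$, then converting the resulting spectral bound into a heat-kernel estimate via Theorem \ref{th-Coulhon1}.

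\textbf{Lower bounds.} For $\alpha\in(0,2)$, $M_{2,\rho_\alpha}(t)\simeq t^{\alpha}$. Plugging the known $\Phi_G$ into the second conclusion of Theorem \ref{th-rho-low} reduces the problem to minimizing an expression of the shape $k/\pi(k) + n/\pi(k)^{\alpha/2}$ over integer $k$, where $\pi$ is read off the corresponding $\Phi_G$. Balancing the two terms gives the optimum $k$ and, after a routine computation, the announced exponent in each case: $k\simeq n^{(d+2)/(d+\alpha)}$ yields $n^{d/(d+\alpha)}$ in case (1); the same balancing carrying the logarithmic factor $(\log k)^{2/(d+2)}$ through the arithmetic gives $n^{d/(d+\alpha)}(\log n)^{\alpha/(d+\alpha)}$ in case (2); and in case (3) one obtains $n^{(d+1)/(d+1+\alpha)}$.

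\textbf{Upper bounds.} Take $\mu_H$ to be a stable-like measure of index $\alpha$ on $H$, for instance $\mu_H(h)\asymp (1+|h|)^{-\alpha-d}$, so that $\mu_H$ has finite weak $\rho_\alpha$-moment and $\Lambda^{-1}_{2,H,\mu_H}(s)\simeq s^{-d/\alpha}$. The choice of $\mu_K$ determines $\Lambda^{-1}_{2,K,\mu_K}(s)$: in case (1), $\mu_K$ is arbitrary and $\Lambda^{-1}_{2,K,\mu_K}(s)\le |K|$ is bounded; in case (2), a second-moment $\mu_K$ gives $\Lambda^{-1}_{2,K,\mu_K}(s)\simeq s^{-D/2}$ where $D$ is the degree of $K$; in case (3), one chooses $\mu_K$ stable-like of the \emph{same} index $\alpha$ on $K$, and Corollary \ref{cor-Good-Phi}(2) then yields $\Lambda^{-1}_{2,K,\mu_K}(s)\simeq \exp(s^{-1/\alpha})$. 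In each case, $\mu=\tfrac12(\mu_H+\mu_K)$ lies in $\widetilde{\mathcal S}_{K\wr H,\rho_\alpha}$. Feeding the two inverse profiles into Theorem \ref{theo-wreathlow} and solving for $s$ in $v\simeq \bigl(\Lambda^{-1}_{2,K,\mu_K}(s)\bigr)^{\Lambda^{-1}_{2,H,\mu_H}(s)}\,\Lambda^{-1}_{2,H,\mu_H}(s)$ gives upper bounds on $\Lambda_{2,K\wr H,\mu}(v)$; the implicit equation defining $\psi$ in Theorem \ref{th-Coulhon1} then produces $\mu^{(2n)}(e)\lesssim \exp(-\Psi(n))$ with the required $\Psi$ in each case.

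\textbf{Main obstacle.} The one genuine delicacy is the selection of $\mu_K$ in case (3). If $\mu_K$ is only taken to have finite second moment, then $\Lambda^{-1}_{2,K,\mu_K}(s)\simeq \exp(s^{-1/2})$ and the same machinery produces the weaker bound $\exp(-n^{(\alpha+2d)/(3\alpha+2d)})$, which happens to agree with $\exp(-n^{(d+1)/(d+1+\alpha)})$ at $\alpha=2$ but is strictly worse for $\alpha<2$. To recover the correct exponent one must exploit that a stable-like measure of index $\alpha$ on the lamp group $K$ has $\Lambda_{2,K,\mu_K}(v)\simeq (\log v)^{-\alpha}$ rather than the $(\log v)^{-2}$ coming from Gaussian moments; matching the index $\alpha$ on both factors is what produces the clean final exponent $(d+1)/(d+1+\alpha)$ after iterating the $(\cdot)^{(\cdot)}$ arithmetic in Theorem \ref{theo-wreathlow}. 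Apart from this matching requirement, the remaining steps are direct computations.
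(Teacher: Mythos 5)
Your lower-bound argument (balancing the two terms from Theorem \ref{th-rho-low}) and your choice of witness measures $\mu_H,\mu_K$ match the paper, and the arithmetic in the ``Main obstacle'' paragraph about needing a stable-like $\mu_K$ of index $\alpha$ on the lamp group is correct. The gap is in the logical direction of the upper-bound step. You cite Theorem \ref{theo-wreathlow}, which gives $\Lambda_{2,K\wr H,\mu}(v)\le s$, and then feed this ``upper bound on $\Lambda$'' into the implicit definition of $\psi$ in Theorem \ref{th-Coulhon1} to claim $\mu^{(2n)}(e)\lesssim \exp(-\Psi(n))$. That cannot work: in Theorem \ref{th-Coulhon1} one has $\phi^{(2n+2)}(e)\le 2\psi(2n)$ with $t=\int_1^{1/\psi(t)}\frac{ds}{2s\Lambda_\phi(4s)}$, so forcing $\psi$ small means forcing $1/\psi$ large, which requires $\Lambda_\phi$ to be bounded \emph{below}, not above. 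An upper bound on $\Lambda$ tells you nothing in this direction.

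What is actually needed is the lower bound on $\Lambda_{2,K\wr H,\mu}$, namely the first inequality of Theorem \ref{th-WR}, and that is the technically substantive input. Erschler's inequality (Theorem \ref{theo-Erschler}) only controls $\Lambda_1$, and the crude Cheeger comparison $\Lambda_2\gtrsim\Lambda_1^2$ loses a power and produces a strictly worse exponent for spread-out measures, as the paper notes explicitly in the first example following Proposition \ref{pro-wreathup}. The paper sidesteps that loss via Proposition \ref{pro-wreathup} combined with the subordination comparison of Theorem \ref{th-subtrick}; Theorem \ref{th-WR} packages exactly that. Your write-up never engages this step; it reaches the correct exponents only because the volume thresholds in Theorem \ref{theo-wreathlow} and in the first inequality of Theorem \ref{th-WR} differ by asymptotically negligible factors. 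Replace the appeal to Theorem \ref{theo-wreathlow} by the first inequality of Theorem \ref{th-WR}, together with Lemma \ref{lem-IsoNash}, and the rest of your computation closes.
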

\begin{proof}
The lower bounds are already derived in \cite{BSClmrw}. They also follow from 
Theorem \ref{th-rho-low}. The upper bounds follow from 
Theorem \ref{th-WR} and known results on $K,H$. Consider 
for instance 
the case when $K$ has exponential volume growth. To obtain an upper bound 
on $\Phi_{K\wr H,\rho_\alpha}$, consider the measures 
$$\mu_{H,\alpha}(h)\asymp (1+|h|)^{-\alpha-d} \mbox{ on } H \mbox{ and } 
\mu_{K,\alpha}(k)\asymp \sum_1^\infty 4^{-\alpha k}\mathbf u_{4^k} \mbox{ on }K.
$$
They satisfy $W(\rho_\alpha,\mu_{H,\alpha})<\infty $ 
and $W(\rho_\alpha,\mu_{K,\alpha})<\infty$ and this immediately implies 
$W(\rho_\alpha,\mu)<\infty$ 
where $\mu=\frac{1}{2}(\mu_{H,\alpha}+\mu_{K,\alpha})$ is understood as 
a probability measure on $K\wr H$. 
By  Theorems \ref{th-LW}-\ref{th-A-pol-Lambda}, 
$\Lambda_{2,K,\mu_{K,\alpha}}(v)\simeq (\log (e+ v))^{-\alpha}$ and 
$\Lambda_{2,H,\mu_{H,\alpha}}(v)\simeq v^{-\alpha/d}$. Theorem \ref{th-WR}
Lemma \ref{lem-IsoNash} and Theorem \ref{th-Coulhon1} give the desired result.
\end{proof}

\begin{theo} \label{th-wr-Pol-rho}
Let $H$ be a group of polynomial volume growth of degree $d$. 
Let $\rho$ be a regularly varying function of index $2$ and set
$M(t)= s^2/\int_0^t\frac{sds}{\rho(s)}$.  Assume that 
$\theta(t)=\int_0^t\frac{sds}{\rho(s)}$ satisfies 
$\theta(t^a)\simeq \theta(t)$ for each $a>0$.
\begin{enumerate}
\item If $K\neq \{e_K\}$ is finite, we have
$$\widetilde{\Phi}_{K\wr H,\rho}(n)\simeq 
\exp\left(-\left(n \int_0^{n} \frac{sds}{\rho(s)}\right)^{d/(2+d)}\right).$$
\item If $K$ is not finite and has polynomial volume growth, we have
$$\widetilde{\Phi}_{K\wr H,\rho}(n)\simeq 
\exp\left(-\left(n (\log n)^{2/d} \int_0^{n} \frac{sds}{\rho(s)}\right)^{d/(2+d)}\right) .$$
\item If $K$ satisfies $\Phi_{K}(n)\gtrsim 
\exp(-n^{1/3})$, we have
$$\widetilde{\Phi}_{K\wr H,\rho}(n)\gtrsim
\exp\left(-\left(n (\log n)^{2/(d+1)} \int_0^{n} \frac{sds}{\rho(s)}
\right)^{(d+1)/(d+3)}\right).$$
\end{enumerate}
\end{theo}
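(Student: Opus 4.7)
The plan is to follow the blueprint of the proof of Theorem \ref{th-wr-Pol-alpha}. I would obtain each of the three lower bounds by applying the second statement of Theorem \ref{th-rho-low} to $G=K\wr H$, using as input the known lower bound on $\Phi_{K\wr H}(n)$ in each case, and obtain the matching upper bounds in cases 1 and 2 by exhibiting an explicit witness measure $\mu=\frac{1}{2}(\mu_H+\mu_K)$ on $K\wr H$ satisfying $W(\rho,\mu)<\infty$ and appealing to Theorem \ref{th-WR} and Theorem \ref{th-Coulhon1}.

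For the lower bounds, I would start from the known estimates $\Phi_{K\wr H}(n)\simeq\exp(-n^{d/(d+2)})$ in case 1 and $\Phi_{K\wr H}(n)\simeq\exp(-n^{d/(d+2)}(\log n)^{2/(d+2)})$ in case 2 (see \cite{PSCwp,Erschler2006}); in case 3 I would derive an analogous lower bound for $\Phi_{K\wr H}$ by combining the hypothesis $\Phi_K(n)\gtrsim\exp(-n^{1/3})$ with the upper-bound part of Theorem \ref{th-WR} applied to $\mathbf u$ on $K\wr H$. Writing each such lower bound in the form $\exp(-n/\pi(n))$ and inserting $\pi$ into the second statement of Theorem \ref{th-rho-low}, I would use $M(t)\simeq t^2/\theta(t)$ together with the slow-variation hypothesis $\theta(t^a)\simeq\theta(t)$ to absorb the inner logarithms arising from $\pi(k)^{1/2}$. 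The two competing terms $k/\pi(k)$ and $n/M(a\pi(k)^{1/2})$ balance at $k\simeq n\theta(n)$, and substituting this value back reproduces the three stated expressions; the surviving $(\log n)$-factors in cases 2 and 3 are exactly those already present in $\pi$.

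For the upper bound in cases 1 and 2, I would take $\mu_H$ (and in case 2, $\mu_K$) to be subordinated measures of the $\phi_\ell$ type introduced after (\ref{ellrho}), tuned so that $W(\rho,\mu_H),W(\rho,\mu_K)\le K'$. Using Theorem \ref{pro-compphiG} for the upper bound on the spectral profile and a matching lower bound computation (in the spirit of the appendix, since the subordination result Theorem \ref{th-Lpsi} does not directly apply to the regularly varying index-$2$ case), one obtains $\Lambda_{2,H,\mu_H}(v)\simeq\theta(v)/v^{2/d}$, and similarly $\Lambda_{2,K,\mu_K}(v)\simeq\theta(v)/v^{2/d_K}$ for $\mu_K$ in case 2. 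Inverting these estimates and plugging them into the lower-bound direction of Theorem \ref{th-WR} yields $\Lambda_{2,K\wr H,\mu}(v)\gtrsim\theta(\log v)/(\log v)^{2/d}$ in case 1, with an additional $(\log\log v)^{2/d}$ factor in case 2; Theorem \ref{th-Coulhon1} combined with Karamata's theorem applied to the integral defining $\psi$ then completes the proof. The main obstacle will be producing the required matching lower bound on $\Lambda_{2,H,\mu_H}$ when $\rho$ is regularly varying of index $2$, a regime the elegant machinery of Theorem \ref{th-Lpsi} does not cover; no upper bound is claimed in case 3 because the one-sided hypothesis on $\Phi_K$ yields only an upper bound on $\Lambda_{2,K}$ (hence an upper bound on $\Lambda_{2,K}^{-1}$), which is insufficient to activate the lower-bound direction of Theorem \ref{th-WR}.
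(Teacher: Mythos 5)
Your proposal is correct and follows essentially the same route as the paper: lower bounds via the second statement of Theorem \ref{th-rho-low} with the balance point $k\simeq n\theta(n)$, and upper bounds via Erschler-type isoperimetry for the measure $\mu=\frac{1}{2}(\mu_H+\mu_K)$ followed by Theorem \ref{th-Coulhon1}. The only variant is at the wreath-product step: you invoke the packaged $L^2$-Erschler result, Theorem \ref{th-WR} (which itself is Proposition \ref{pro-wreathup} $+$ Theorem \ref{th-subtrick}), so you must first pin down $\Lambda_{2,H,\mu_H}$ and $\Lambda_{2,K,\mu_K}$; the paper instead applies Proposition \ref{pro-wreathup} directly, taking the spreading measures to be uniform measures on balls and getting the needed Dirichlet comparison $\mathcal E_{\mathbf u_r}\le C (r^2/\theta(r))\mathcal E_{\mu_{G,\rho}}$ from the sharpened pseudo-Poincar\'e inequality of Proposition \ref{pro-relppi}, thereby sidestepping any computation of the factor profiles. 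Your ``main obstacle'' remark is slightly overcautious: the lower-bound direction of Theorem \ref{th-WR} only needs \emph{lower} bounds on $\Lambda_{2,H,\mu_H}$ and $\Lambda_{2,K,\mu_K}$ (they enter through $\Lambda^{-1}$), and those follow from exactly the same Proposition \ref{pro-relppi} argument; Theorem \ref{th-Lpsi} plays no role here, so its inapplicability in the index-$2$ regime is not a real issue. Your observation that case 3 admits no matching upper bound, and your explanation for it, agree with the paper.
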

\begin{proof}The lower bounds follow from Theorem \ref{th-rho-low}. 
The upper bounds follow from Proposition \ref{pro-wreathup}. 
Note that the upper bound is missing in the last case.  
We outline the upper-bound argument in case 2. Consider the measures 
$$\mu_{G,\rho}(g)\asymp \frac{1}{(1+|g|)^{2+d}\ell(1+|g|)}$$
for $G=H$ and $G=K$. By Proposition \ref{pro-relppi}, we have
$$\mathcal E_{G,\mathbf u_r} \le C \frac{r^2}{\theta (r)} \mathcal E_{\mu_{G,\rho}}$$
for $G=H,K$.  This allows us to verify the hypotheses of Proposition 
\ref{pro-wreathup} with  $H_1=H$, $H_2=K$,
$\zeta_{i,v_i(t)}=\mathbf u_{r(2v_i(t))}$, $r(v_i)\simeq v^{1/d_i}$ and 
$v_i(t)\simeq ( t \theta(t))^{d_i/2})$, where $d_1=d$ and $d_2$ is the 
degree of polynomial volume growth of $K$.  In the notation of Proposition 
\ref{pro-wreathup}, this gives $v(t)\simeq \exp( [t \theta(t)]^{d/2}\log t)$    
which translates into
$$\Lambda_{K\wr H,\mu}(v) \gtrsim 
\frac{(\log\log v)^{2/d}\theta(\log v)}{(\log v)^{2/d}}$$
where the measure $\mu$ on $K\wr H$ is given by 
$\mu=\frac{1}{2}(\mu_{H,\rho}+\mu_{K,\rho})$.
With this estimate in hand, Theorem \ref{th-Coulhon1} gives
$\mu^{(2n)}(e)\lesssim  \psi(n)$ where $\psi$ is given 
implicitly as a function of $t$ by
$$t= \int_1^{1/\psi} 
\frac{(\log v)^{2/d}}{[\log(e+\log v)]^{2/d}\theta(\log v)}.$$ 
A somewhat tedious computation shows that this equality gives 
$$t\simeq  \frac{[\log (1/\psi)]^{(2+d)/d}}{[\log \log (1/\psi)]^{2/d} 
\theta (\log (1/\psi))}$$  
or, equivalently,
$$\log (1/\psi) \simeq  \left( t (\log t)^{2/d} \theta (t)\right)^{d/(2+d)}.$$
Note that the assumed property that $\theta(t^a)\simeq \theta(t)$ for $a>0$
has been used repeatedly in these computations.
This gives the desired upper bound on $\mu^{(2n)}(e)$ and thus on 
$\Phi_{K\wr H,\rho}$ as well.  
\end{proof}

\begin{rem} In the third statement of Theorem \ref{th-wr-Pol-rho}, 
even if we assume in addition that $K$ has exponential volume growth
(in which case $\Phi_{K}(n)\simeq \exp(-n^{1/3})$), we would still 
not be able to state a matching upper bound.
The reason  is that we do not 
have at our disposal the appropriate pseudo-Poincar\'e inequality on $K$ 
(in the case when $K$ has polynomial volume growth, we used Proposition 
\ref{pro-relppi}).  However, consider the special case when 
$K=F\wr \mathbb Z$ with $F\neq \{e\}$ finite. This group has 
exponential volume growth and satisfies $\Phi_{K}(n)\simeq \exp(-n^{1/3})$.
Further, Proposition \ref{pro-wreathup} applied with $H_1=K$, $H_2=F$ provides 
us with a measure $\mu_{K,\rho}$ on $K$ (and accompanying measures $\zeta_{v}$) 
which is a good witness for $\widetilde{\Phi}_{K,\rho}$ and can be used 
to apply Proposition \ref{pro-wreathup} with $H_1=H$, $H_2=K=F\wr \mathbb Z$
$\mu_1=\mu_{H,\rho}$ as above and $\mu_2=\mu_{K,\rho}$ (the measure just 
obtained on $K=F\wr \mathbb Z$). After elementary but tedious computations,
Proposition \ref{pro-wreathup} implies that
the measure 
$\mu=\frac{1}{2}(\mu_1+\mu_2)$ 
on $K\wr H =(F\wr \mathbb  Z)\wr H$ satisfies
$$\mu^{(2n)}(e)\le 
\exp\left(-\left(n (\log n)^{2/(d+1)} \int_0^{n} \frac{sds}{\rho(s)}
\right)^{(d+1)/(d+3)}\right).$$
This shows that
$$\Phi_{(F\wr \mathbb Z)\wr H,\rho}(n) \simeq 
\exp\left(-\left(n (\log n)^{2/(d+1)} \int_0^{n} \frac{sds}{\rho(s)}
\right)^{(d+1)/(d+3)}\right).$$
In particular, the lower bound stated in Theorem \ref{th-wr-Pol-rho}
is sharp in this case. We conjecture that it is also sharp when 
$K$ is polycyclic of exponential volume growth.
\end{rem}

\subsection{Anisotropic measures on nilpotent groups} 

This section is concerned with special cases of the following problem 
raised and studied in \cite{SCZ-nil}.  Given a group $G$ generated
by a $k$-tuple $S=(s_1,\dots,s_k)$, study the behavior 
of the random walks driven by the measures
$$\mu_{S,a}(g)=\frac{1}{k}\sum_1^k \sum_{m\in \mathbb Z}
\frac{c_i}{(1+|m|)^{1+\alpha_i}}
\mathbf 1_{s_i^m}(g)$$ 
where $a=(\alpha_1,\dots,\alpha_k)\in (0,\infty)^k$ and 
$c_i=(\sum_\mathbb Z (1+|m|)^{-1-\alpha_i})^{-1}$ 
In words, to take a step according to $\mu_{S,a}$, 
pick one of the $k$ generators, say $s_i$, uniformly at random. Independently, pick an integer $m\in \mathbb Z$ according to the power law 
giving probability $c_i(1+|m|)^{-1-\alpha_i}$ to $m$. Then
multiply the present position (on the right) by $s_i^m$.  

It is of interest to investigate the behavior of $\mu_{S,a}^{(n)}(e)$
and to understand how, given the generating $k$-tuple $S$, 
this behavior depends on the choice of $a=(\alpha_1,\dots,\alpha_k)$.  
Here we use the results of \cite{SCZ-nil} and the 
technique of the present paper to prove the following result.

We consider two groups $H=H_1$ and $K=H_2$. The group $H$ is assumed to 
be nilpotent generated by the $S=(s_1,\dots ,s_{p})$. 
On this nilpotent group, we considered the measures $\mu_{H,S,a}$ 
with $a=(\alpha_1,\dots,\alpha_p)\in (0,2)^p$. The group $K$ will be either 
finite or nilpotent. If it is finite, we let $\mu_{K}$ be the uniform 
measure on $K$. If $K$ is nilpotent, generated by a given tuple 
$T=(t_1,\dots,t_q)$, we consider the measures $\mu_{K,T,b}$ with
$b=(\beta_1,\dots,\beta_q)\in (0,2)^q$. 

Next we consider the wreath product $G=K\wr H$. When $K$ is nilpotent,
the generating sets $S$ and $T$ (for $H$ and $K$, respectively) together 
produce a generating set 
$\Sigma=(\sigma_1,\dots, \sigma_k)$, $k=p+q$, of $K\wr H$ 
where $\sigma_1,\dots, \sigma_p$ corresponds to $s_1\dots, s_p$ and 
generates $H$ inside $K\wr H$ and the generators 
$\sigma_{p+1},\dots \sigma_{p+q}$ correspond to $t_1\dots, t_q$ and generate
the copy of $K$ in $K\wr H$ which seats at $e_H$. Similarly, set $c=(\gamma_1,\dots,\gamma_k)$ with $\gamma_i=\alpha_i, 1\le i\le p$ and $\gamma_{i}=
\beta_{i-p}$, $i=p+1,\dots, p+q=k$.
By elementary Dirichlet form comparison arguments, we know that the measures 
$$\mu=\frac{1}{2}(\mu_{H,S,a}+\mu_{K,T,b}) \mbox{ and }
\mu_{K\wr H, \Sigma, c} \mbox{ on } G=K\wr H$$
satisfy $\mu^{(2n)}(e)\simeq \mu_{\Sigma, c}^{(2n)}(e)$.
\begin{theo} Let $H,S,p$ and  $a\in (0,2)^p$  
be as above. Let $d(a)$ be the real given 
by {\em \cite[Theorem 1.8]{SCZ-nil}} and such that 
$\mu_{S,a}^{(n)}(e)\simeq n^{-d(a)}$.
\begin{enumerate}
\item  Assume that $K\neq \{e\}$ is finite. Then 
the measure $\mu=\frac{1}{2}(\mu_{S,a}+\mu_K)$ on $K\wr H$ satisfies
$$\mu^{(n)}(e)\simeq \exp\left(-n^{d(a)/(1+d(a))}\right).$$
\item Assume that $K$ is nilpotent (infinite)  
and $T$, $q$ and $b\in (0,2)^q$ are as 
described above. Then, on $G=K\wr H$ equipped with the generating set 
$\Sigma$ define  above, the measure $\mu_{\Sigma,c}$ satisfies
$$\mu_{\Sigma,c}^{(n)}(e)\simeq \exp\left(-n^{d(a)/(1+d(a))}(\log n)^{1/(1+d(a))}\right).$$
\end{enumerate}
\end{theo}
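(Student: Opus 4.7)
The plan is to reduce both statements to an $L^2$-isoperimetric estimate on $K\wr H$ provided by Theorem~\ref{th-WR}, and then to translate this back to the return probability via the Coulhon-type relations of Theorems~\ref{th-Coulhon1}--\ref{th-Coulhon2} together with Lemma~\ref{lem-CG}. The Dirichlet-form comparison mentioned just above the statement lets us replace $\mu_{\Sigma,c}$ by the switch-or-walk measure $\mu=\frac{1}{2}(\mu_{H,S,a}+\mu_K)$ in case~(1), resp.\ $\mu=\frac{1}{2}(\mu_{H,S,a}+\mu_{K,T,b})$ in case~(2), which is exactly the form to which Theorem~\ref{th-WR} applies, with $K$ playing the role of the lamp group $H_1$ and $H$ that of the base $H_2$.

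The first step is to convert the input $\mu_{H,S,a}^{(n)}(e)\simeq n^{-d(a)}$ from \cite[Theorem~1.8]{SCZ-nil} (and, in case~(2), $\mu_{K,T,b}^{(n)}(e)\simeq n^{-d(b)}$) into sharp spectral-profile estimates. Since $\psi(t)=t^{-d(a)}$ trivially satisfies the regularity hypothesis of the remark following Theorem~\ref{th-Coulhon2}, this yields $\Lambda_{H,\mu_{H,S,a}}^{-1}(s)\simeq s^{-d(a)}$, and analogously $\Lambda_{K,\mu_{K,T,b}}^{-1}(s)\simeq s^{-d(b)}$ in case~(2). In case~(1), since $K$ is finite and non-trivial, $\Lambda_{K,\mu_K}^{-1}(s)=|K|$ for every $s$ below a positive threshold depending only on $(K,\mu_K)$. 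Feeding these data into the two inequalities of Theorem~\ref{th-WR}, the small-$s$ equations for the boundary volumes reduce to $\log v\simeq s^{-d(a)}\log|K|$ in case~(1) and $\log v\simeq d(b)\,s^{-d(a)}\log(1/s)$ in case~(2), the extra $\Lambda_H^{-1}(s)$ factor on the upper-bound side of Theorem~\ref{th-WR} contributing only a negligible additive $d(a)\log(1/s)$ correction. Inverting,
\[
\Lambda_\mu(v)\simeq (\log v)^{-1/d(a)}\ \text{in case (1)},\qquad \Lambda_\mu(v)\simeq \Bigl(\frac{\log\log v}{\log v}\Bigr)^{1/d(a)}\ \text{in case (2)}.
\]

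The last step is to convert these profiles into two-sided estimates for $\mu^{(2n)}(e)$. For the upper bound I would plug the lower bound on $\Lambda_\mu$ into Theorem~\ref{th-Coulhon1}: the integrals $\int_1^V(\log s)^{1/d(a)}\,ds/s\simeq(\log V)^{(d(a)+1)/d(a)}$ and $\int_1^V(\log s/\log\log s)^{1/d(a)}\,ds/s\simeq(\log V)^{(d(a)+1)/d(a)}(\log\log V)^{-1/d(a)}$ invert to $\psi(t)\simeq\exp(-t^{d(a)/(1+d(a))})$ and $\psi(t)\simeq\exp(-t^{d(a)/(1+d(a))}(\log t)^{1/(1+d(a))})$, respectively. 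For the matching lower bound, the upper bound on $\Lambda_\mu$ produces a finite set $U_v$ with $|U_v|\le v$ and $\lambda_\mu(U_v)\lesssim\Lambda_\mu(v)$, which I would feed into Lemma~\ref{lem-CG}; optimizing $e^{-Cn\Lambda_\mu(v)}/|U_v|$ at the critical scale $\log v\simeq n^{d(a)/(1+d(a))}$ in case~(1), respectively $\log v\simeq n^{d(a)/(1+d(a))}(\log n)^{1/(1+d(a))}$ in case~(2), produces the matching lower bound.

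The main obstacle is the logarithmic bookkeeping in case~(2): the inversion producing $\Lambda_\mu(v)\simeq(\log\log v/\log v)^{1/d(a)}$ and the subsequent integration and optimization all involve transcendental equations of the form $L^{1+1/d(a)}\simeq n(\log L)^{1/d(a)}$, and one must verify carefully that the iterated-log corrections that arise are absorbed by the $\simeq$ equivalence. Apart from that, the argument is a direct application of the machinery developed in Sections~\ref{sec-lowPhi}--\ref{sec-wreath} and closely parallels the proof of Theorem~\ref{th-wr-Pol-alpha}.
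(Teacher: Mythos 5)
Your proposal is correct and follows the same route the paper takes: the paper's entire proof is ``This follows from Theorem~\ref{th-WR}'' plus the input $\mu_i^{(n)}(e)\simeq n^{-d_i}$ from \cite{SCZ-nil}, and the explicit spectral-profile conversions and Coulhon/Lemma~\ref{lem-CG} computations you spell out are exactly the bookkeeping the paper leaves implicit. Your normalization $\Lambda_{H,\mu_{S,a}}^{-1}(s)\simeq s^{-d(a)}$ (i.e.\ $\Lambda\simeq v^{-1/d(a)}$) is the one that yields the stated exponents; the paper's parenthetical $\Lambda_{2,\mu_i}(v)\simeq v^{-2/d_i}$ is evidently a typo for $v^{-1/d_i}$.
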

\begin{proof}This follows from Theorem \ref{th-WR} because 
\cite[Theorem 1.8]{SCZ-nil}
shows that the measures $\mu_1=\mu_{H,S,a}$ and $\mu_2=\mu_{K,T,b}$
satisfy  $\mu_{i}^{(n)}(e)\simeq n^{-d_i}$ where $d_1=d(a)$, $d_2=d_(b)$ 
are the real described in  \cite[Theorem 1.8]{SCZ-nil} (recall that these 
estimates are equivalent to $\Lambda_{2,\mu_i}(v) \simeq v^{-2/d_i}$).
\end{proof}

\subsection{Local time functionals}\label{sec-localtime}

Let $H$ be a group equipped with a symmetric  measure $\mu$.  Let $\ell(x,n)$
be the number of visits to $x$ up to time $n$. More precisely,
let $(X_n)$  denotes the trajectory of a random walk 
driven by $\mu$ on $H$ and set 
$$l(n,x)=\#\{0< k\leq n: X_k=x\}.$$ 
It is well-known that the behavior of the probability 
of return of the switch-walk-switch random walk on the lamplighter group
$(\mathbb Z/2\mathbb Z)\wr H$ is related to certain functionals of the local 
times $(\ell(x,n))_{x\in H}$.
More precisely and more generally, let $K$ be a finitely generated group 
(possibly finite). Let $\mu_K$ be a symmetric measure on $K$
satisfying $\mu_K(e_K)>0$. Let $q=\mu_K*\mu*\mu_K$ be the switch-walk-switch 
measure on $K\wr H$ (see, e.g., \cite{SCZ-dv1} for details. With this notation,
we have
\begin{equation*}
q^{(n)}((\boldsymbol{e}_K,h)) \asymp \mathbf{E}_\mu
\left( \prod\limits_{x\in H}\nu ^{(2l(n,x))}(e_K)\boldsymbol{1}_{\{X_{n}=h\}
}\right)
\end{equation*}
where $\mathbf E_\mu$ and $(X_n)$ refers to the random walk on $H$ 
driven by $\mu$.

Set%
\begin{equation*}
F_{K}(n):=-\log \nu ^{(2n)}(e_K)
\end{equation*}%
so that, for any $h\in H$, 
\begin{equation}  \label{trick}
q^{(n)}((\boldsymbol{e}_K,h))
\simeq \mathbf{E}\left( e^{
-\sum_{x\in H}F_{K}(l(n,x))}\boldsymbol{1}_{\{X_{n}=h\}}\right).
\end{equation}

Assume next that, for each $R>0$ there is a set $U_R\subset of H$ and 
$\kappa\ge 1$ such that
\begin{equation}\label{trick-U}
|U_R|\le R^\kappa  \mbox{ and }   
\mu^{(n)}( H\setminus U_R)\le Cn^\kappa(1+ R/n)^{-1/\kappa}.
\end{equation}
We note that the second condition follows easily from the tail 
condition
\begin{equation}\label{trick-U1}
\mu( H\setminus U_R)\le C(1+ R)^{-1/\kappa}
\end{equation}
when $U_R=\{h: N(h)>R\}$ where $N: H\ra [0,\infty)$ satisfies 
$N(h_1h_2)\le N(h_1)+N(h_2)$. Indeed, under such circumstances, we have
$$\mu^{(n)}( H\setminus U_R)\le n\mu(H\setminus U_{R/n})\le 
Cn(1+R/n)^{-1/\kappa}.$$
Writing
\begin{eqnarray*}
\lefteqn{\mathbf{E}_\mu\left( e^{
-\sum_{x\in H}F_{K}(l(n,x))}\right)
=}&&\\ &&
\mathbf{E}_\mu\left( \sum_{h\in U_R}e^{
-\sum_{x\in H}F_{K}(l(n,x))}\boldsymbol{1}_{\{X_{n}=h\}}\right)\\
&&+\mathbf{E}_\mu\left(\sum_{h\in H\setminus U_R} e^{
-\sum_{x\in H}F_{K}(l(n,x))}\boldsymbol{1}_{\{X_{n}=h\}}\right)\\
&\le& |U_R| q^{(2[n/2])}(e_{K\wr H}) + \mu^{(n)}(H\setminus U_R)
\end{eqnarray*}
shows that, under assumption (\ref{trick-U}) and assuming that
$q^{(n)}(e_{K\wr H})\simeq \exp(- \omega(n))$ with 
$\omega(n)$  regularly varying of index in $(0,1]$,  we can conclude that
\begin{equation}
\mathbf{E}_\mu\left( e^{
-\sum_{x\in H}F_{K}(l(n,x))}\right)\simeq \exp(-\omega(n))
\end{equation}
as well.

This technique and remarks, together with Theorems 
\ref{th-wr-Pol-alpha}-\ref{th-wr-Pol-rho}, suffice to prove 
the following results.

\begin{cor} Let $H$ be a group of polynomial volume growth of degree $d$. Let 
$\mu_\alpha(h)\asymp (1+|h|)^{-\alpha-d}$, $\alpha>0$. Let $R_n$ be the 
number of visited point up to time $n$. For any fixed $\kappa>0$,
\begin{itemize}
\item If $\alpha>2$ then $\mathbf{E}_{\mu_\alpha}\left( e^{
-\kappa R_n}\right)\simeq \exp(-n^{d/(2+d)})$.
\item If $\alpha=2$ then $\mathbf{E}_{\mu_2}\left( e^{
-\kappa R_n}\right)\simeq \exp(-(n \log n)^{d/(2+d)})$.
\item If $\alpha\in (0,2)$ then $\mathbf{E}_{\mu_\alpha}\left( e^{
-\kappa R_n}\right)\simeq \exp(-n^{d/(\alpha+d)})$. 
\end{itemize}
\end{cor}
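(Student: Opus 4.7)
The plan is to apply the lamplighter reduction of Section \ref{sec-localtime} with $K=\mathbb{Z}/2\mathbb{Z}$ carrying its uniform symmetric measure $\mu_K$. For this choice $\mu_K^{(2n)}(e_K)=1/2$ for every $n\ge 1$, so that $F_K(n)=(\log 2)\mathbf 1_{\{n\ge 1\}}$ and $\sum_{x\in H}F_K(\ell(n,x))=(\log 2)\,R_n$. Writing $q=\mu_K*\mu_\alpha*\mu_K$ on $\mathbb{Z}_2\wr H$, identity (\ref{trick}) becomes
\[
q^{(n)}\bigl((\boldsymbol e_K,h)\bigr)\simeq \mathbf E_{\mu_\alpha}\bigl(e^{-(\log 2)\,R_n}\,\mathbf 1_{\{X_n=h\}}\bigr).
\]
Taking $h=e_H$ yields the cheap lower bound $\mathbf E_{\mu_\alpha}(e^{-(\log 2)R_n})\gtrsim q^{(n)}(e_{K\wr H})$, while summing over $h$ and invoking the $U_R$-splitting displayed just before (\ref{trick-U}) supplies the matching upper bound.

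To verify (\ref{trick-U}), I would take $U_R=\{h\in H:|h|\le R\}$. Polynomial volume growth of degree $d$ gives $|U_R|\lesssim R^d$. The tail estimate $\mu_\alpha(\{|h|>s\})\asymp s^{-\alpha}$, combined with the subadditive inequality $|X_n|\le|Y_1|+\cdots+|Y_n|$ and elementary truncation, yields $\mu_\alpha^{(n)}(H\setminus U_R)\lesssim n\,(1+R/n)^{-\min(\alpha,1)}$ (wasteful when $\alpha>1$, but amply sufficient). The decay rate $\omega(n)$ with $q^{(n)}(e_{K\wr H})\simeq\exp(-\omega(n))$ is then read off from Section \ref{sec-wreathexa}: Theorem \ref{th-wr-Pol-alpha}(1) gives $\omega(n)\asymp n^{d/(\alpha+d)}$ for $\alpha\in(0,2)$; Theorem \ref{th-wr-Pol-rho}(1) with $\rho(s)=(1+s)^2$ gives $\omega(n)\asymp (n\log n)^{d/(d+2)}$ for $\alpha=2$; and for $\alpha>2$ the measure $\mu_\alpha$ has finite second moment, so the classical Pittet--Saloff-Coste bound \cite{PSCwp} for lamplighters over groups of polynomial growth gives $\omega(n)\asymp n^{d/(d+2)}$. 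In each regime $\omega$ is regularly varying of index in $(0,1)$, hence choosing $R=\exp(c\,n^\beta)$ with $\beta$ the index of $\omega$ and $c>0$ sufficiently small makes both $|U_R|\,q^{(n)}(e_{K\wr H})$ and $\mu_\alpha^{(n)}(H\setminus U_R)$ bounded by $\exp(-c'\omega(n))$, and the sandwich with the cheap lower bound closes.

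For arbitrary $\kappa>0$, the monotonicity $\kappa\mapsto\mathbf E(e^{-\kappa R_n})$ permits sandwiching between the two values $\kappa_m=\log m$ with $m\in\mathbb Z_{\ge 2}$ straddling $\kappa$; each of these is handled by the argument above with $K=\mathbb Z/m\mathbb Z$ in place of $\mathbb Z/2\mathbb Z$, and since $\omega$ is regularly varying of positive index the resulting $\exp(-c_m\omega(n))$ bounds are $\simeq$-equivalent across all $m$. The step I expect to take the most care is verifying that the specific measure $q$ realizes the extremal rates of Theorems \ref{th-wr-Pol-alpha} and \ref{th-wr-Pol-rho} up to $\simeq$; the lower bound $q^{(2n)}(e_{K\wr H})\ge c_1\widetilde\Phi_{K\wr H,\rho_\alpha}(c_2n)$ is automatic from Definition \ref{def2} since $W(\rho_\alpha,q)<\infty$, but the matching upper bound requires inspecting the proofs of those theorems and observing via pseudo-Poincar\'e comparison that $\mu_\alpha(h)\asymp(1+|h|)^{-\alpha-d}$ on $H$ is Dirichlet-form equivalent to the spread-out measure $\sum_k 4^{-\alpha k}\mathbf u_{4^k}$ used there.
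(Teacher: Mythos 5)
Your plan is exactly the route the paper intends: Section~\ref{sec-localtime}'s lamplighter reduction with $K=\mathbb Z/2\mathbb Z$ (so $F_K\equiv\log 2$ on $\{n\ge 1\}$), the $U_R$-splitting before~(\ref{trick-U}) for the upper bound, and Theorems~\ref{th-wr-Pol-alpha}, \ref{th-wr-Pol-rho}, and \cite{PSCwp} to read off $\omega(n)$ in the three regimes of $\alpha$; the Dirichlet-form comparison between the switch-walk-switch $q=\mu_K*\mu_\alpha*\mu_K$ and the switch-or-walk $\frac12(\mu_K+\mu_\alpha)$, and between $\mu_\alpha$ and the spread-out $\sum 4^{-\alpha k}\mathbf u_{4^k}$, are the right ingredients for transferring the extremal rates to the specific measure $q$. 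The one small slip is the handling of general $\kappa>0$: for $\kappa<\log 2$ there is no integer $m\ge 2$ with $\log m<\kappa$, so the ``straddling by $\kappa_m=\log m$'' device does not apply there; instead use monotonicity for the lower bound and Jensen for the upper bound, i.e.\ $\mathbf E(e^{-\kappa R_n})=\mathbf E\bigl((e^{-(\log 2)R_n})^{\kappa/\log 2}\bigr)\le\bigl(\mathbf E(e^{-(\log 2)R_n})\bigr)^{\kappa/\log 2}$, which together with $\mathbf E(e^{-\kappa R_n})\ge\mathbf E(e^{-(\log 2)R_n})$ gives $\simeq$-equivalence for all $\kappa$ once the case $\kappa=\log 2$ is settled.
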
  
\begin{rem}Note that the second case, $\alpha=2$, may be new even in 
the case when $H=\mathbb Z$. It gives the behavior of 
$\mathbf E(e^{-\kappa R_n})$ for the walk on $\mathbb Z$ driven by 
the measure $\mu_2(z)=c(1+|z|)^{-2}$ for which there is no 
classical local limit theorem and to which the classical Donsker-Varadhan 
theorem does not apply.
\end{rem}
\begin{cor} Let $H$ be a group of polynomial volume growth of degree $d$. Let 
Let $R_n$ be the 
number of visited point up to time $n$. Consider the
random walk driven by $\mu_{S,a}$ with $a=(\alpha_1,\dots,\alpha_k)\in (0,2)^k$.
Let $d(a)$ be the real given 
by {\em \cite[Theorem 1.8]{SCZ-nil}} and such that 
$\mu_{S,a}^{(n)}(e)\simeq n^{-d(a)}$.
For any fixed $\kappa>0$, we have
$$\mathbf E_{\mu_{S,a}}(e^{-\kappa R_n}) \simeq \exp\left(-n^{d(a)/(1+d(a))}\right).$$
\end{cor}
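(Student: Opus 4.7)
The plan is to follow the local-time functional strategy of Section~\ref{sec-localtime} with $K=\mathbb{Z}/2\mathbb{Z}$ as the lamp group. Take $\mu_K$ uniform on $K$; then $\nu=\mu_K$ satisfies $\nu^{(2\ell)}(e_K)=1/2$ for every $\ell\ge 1$, so $F_K(0)=0$ and $F_K(\ell)=\log 2$ for $\ell\ge 1$. Consequently $\sum_{x\in H}F_K(l(n,x))=(\log 2)R_n$, and the switch-walk-switch measure $q=\mu_K*\mu_{S,a}*\mu_K$ on $K\wr H$ satisfies, by (\ref{trick}),
\[
q^{(n)}((\mathbf{e}_K,h)) \simeq \mathbf{E}_{\mu_{S,a}}\!\left(e^{-(\log 2) R_n}\,\mathbf{1}_{\{X_n=h\}}\right).
\]
The main structural input is the first case of the anisotropic-measures theorem in Section~\ref{sec-wreathexa}, which (together with the Dirichlet form comparison between switch-walk-switch and walk-or-switch measures) yields $q^{(n)}(e_{K\wr H})\simeq\exp(-n^{d(a)/(1+d(a))})$. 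This identification is the hard part of the argument; it combines Theorem~\ref{th-WR} with the sharp on-diagonal bound $\mu_{S,a}^{(n)}(e)\simeq n^{-d(a)}$, equivalently $\Lambda_{H,\mu_{S,a}}(v)\simeq v^{-2/d(a)}$, furnished by \cite[Theorem~1.8]{SCZ-nil}.

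Summing the displayed identity over $h$ and using $\sum_h q^{(n)}((\mathbf{e}_K,h))\ge q^{(n)}(e_{K\wr H})$ immediately gives the lower bound $\mathbf{E}(e^{-(\log 2)R_n})\gtrsim\exp(-cn^{d(a)/(1+d(a))})$. For the matching upper bound I apply the splitting of Section~\ref{sec-localtime} with $U_R=\{h\in H:|h|\le R\}$: polynomial growth of $H$ gives $|U_R|\le CR^d$, while the tail condition (\ref{trick-U1}) with $N(h)=|h|$ follows from $|s_i^m|\le|m|$ and the power-law tails of $\mu_{S,a}$, yielding $\mu_{S,a}(\{|g|>R\})\le C(1+R)^{-\alpha_{\min}}$ with $\alpha_{\min}=\min_i\alpha_i>0$. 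Using symmetry of $q$ to bound $q^{(n)}((\mathbf{e}_K,h))\le Cq^{(n)}(e_{K\wr H})$ uniformly in $h$, the splitting produces
\[
\mathbf{E}(e^{-(\log 2)R_n}) \le CR^d\, q^{(n)}(e_{K\wr H}) + Cn^{1+\alpha_{\min}}R^{-\alpha_{\min}}.
\]
Taking $R=n^M$ with $M$ sufficiently large makes the error term polynomially small, while the first term is super-polynomially small; the $R^d$ prefactor is absorbed into the exponent and the matching upper bound $\mathbf{E}(e^{-(\log 2)R_n})\lesssim\exp(-c'n^{d(a)/(1+d(a))})$ follows.

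To extend from $\kappa=\log 2$ to arbitrary $\kappa>0$: the upper bound for $\kappa\ge\log 2$ is trivial by monotonicity, and for $\kappa<\log 2$ it follows from Jensen's inequality applied to the concave map $x\mapsto x^{\kappa/\log 2}$, which gives $\mathbf{E}(e^{-\kappa R_n})\le\mathbf{E}(e^{-(\log 2)R_n})^{\kappa/\log 2}$. For the lower bound, monotonicity suffices when $\kappa\le\log 2$; for larger $\kappa$ one repeats the construction with $K=(\mathbb{Z}/2\mathbb{Z})^m$, whence $F_K(\ell)=m\log 2$ for $\ell\ge 1$, choosing $m$ with $m\log 2\ge\kappa$. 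In every case the bound remains of the form $\exp(-cn^{d(a)/(1+d(a))})$ with adjusted constants, which is the claim. Apart from the $\simeq$-identification of $q^{(n)}(e_{K\wr H})$—the main obstacle, which rests on Theorem~\ref{th-WR} combined with \cite[Theorem~1.8]{SCZ-nil}—the remaining work is routine bookkeeping along the lines of the preceding corollary for the measures $\mu_\alpha$.
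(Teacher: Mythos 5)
Your overall strategy is the one the paper intends: identify $\mathbf E_{\mu_{S,a}}(e^{-\kappa R_n})$ with the return probability of the switch-walk-switch walk on $K\wr H$ via (\ref{trick}), use the anisotropic theorem of Section~\ref{sec-wreathexa} (together with the Dirichlet-form comparison at the start of Section~\ref{sec-wreath}) to pin down $q^{(n)}(e_{K\wr H})\simeq\exp(-n^{d(a)/(1+d(a))})$, and then run the splitting of Section~\ref{sec-localtime}. The treatment of general $\kappa$ via Jensen and via $K=(\mathbb Z/2\mathbb Z)^m$ is also fine, and the tail bound $\mu_{S,a}(\{|g|>R\})\lesssim (1+R)^{-\alpha_{\min}}$ is correct.

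However, the final step of your upper-bound argument has a genuine gap. You end up with
\[
\mathbf{E}(e^{-(\log 2)R_n}) \le CR^d\, q^{(n)}(e_{K\wr H}) + Cn^{1+\alpha_{\min}}R^{-\alpha_{\min}},
\]
and you take $R=n^M$. With this choice the second term is only polynomially small in $n$, while the target bound $\exp\bigl(-c'n^{d(a)/(1+d(a))}\bigr)$ is stretched-exponentially small; a polynomially small error term therefore dominates and the desired upper bound does not follow. The cutoff $R$ must be chosen sub-exponentially large in $n$, not polynomially: writing $\beta=d(a)/(1+d(a))$ and taking $R=\exp(\epsilon n^{\beta})$ with $\epsilon>0$ small, the first term becomes $C\exp\bigl((d\epsilon-c)n^{\beta}\bigr)$, which is $\lesssim\exp(-c''n^{\beta})$ once $\epsilon<c/d$, and the second term becomes $Cn^{1+\alpha_{\min}}\exp(-\alpha_{\min}\epsilon n^{\beta})\lesssim\exp(-c'''n^{\beta})$. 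This is precisely why the paper insists in Section~\ref{sec-localtime} that $\omega(n)=-\log q^{(n)}(e)$ be regularly varying of index in $(0,1]$: that regularity is what allows the constant $\epsilon$ to be absorbed into the $\simeq$-equivalence. Replacing $R=n^M$ by $R=\exp(\epsilon n^{\beta})$ closes the gap; the rest of your argument stands.
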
 

Given a measure $\mu$ such as $\mu_\alpha$ or $\mu_{S,a}$ on $H$, and fixed $\kappa>0$ and $\gamma\in(0,1)$,  we can determine the behavior of
$$n\mapsto \mathbf E (e^{-\kappa\sum_H\ell(n,h)^\gamma}).$$
Indeed, it suffices consider the wreath product $\mathbb Z\wr H$ with a 
measure $\phi$ on $\mathbb Z$ such that $\nu^{(2n)}(0)\simeq \exp(-n^\gamma)$.  
The choice
$\phi(x)\asymp (1+|x|)^{-1} [1+\log (1+|x|)]^{-1/\gamma}$ fulfills 
these requirements (see \cite{SCZ-pollog}).

\begin{cor} Let $H$ be a group of polynomial volume growth of degree $d$. 
Fix $\kappa>0$ and $\gamma\in (0,1)$
Let $\ell(n,x)$ be the number of visits to $x\in H$ up to time $n$.
Let $\mu_\alpha(h)\asymp (1+|h|)^{-\alpha-d}$, $\alpha\in (0,2)$,
or $\mu=\mu_{S,a}$ with $a=(\alpha_1,\dots,\alpha_k)\in (0,2)^k$.
In the second case, let $d(a)$ be the real given 
by {\em \cite[Theorem 1.8]{SCZ-nil}} and such that 
$\mu_{S,a}^{(n)}(e)\simeq n^{-d(a)}$ and set $\frac{d}{\alpha}=d(a)$. 
We have
$$\mathbf E_{\mu}(e^{-\kappa\sum_H\ell(n,h)^\gamma})\simeq 
\exp\left(-n^{(\alpha\gamma+d(1-\gamma))/(\alpha+d(1-\gamma))}\right)
.$$
\end{cor}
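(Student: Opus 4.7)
The plan is to realize $\sum_{x\in H}\ell(n,x)^\gamma$ as the exponent appearing in a switch--walk--switch representation on the wreath product $\mathbb Z\wr H$, and then to analyze the resulting return probability via Theorem~\ref{th-WR} and Coulhon's estimates. First, following the recipe in Section~\ref{sec-localtime}, I take the lamp group $K=\mathbb Z$ and the symmetric measure $\phi(x)\asymp (1+|x|)^{-1}[1+\log(1+|x|)]^{-1/\gamma}$, for which \cite{SCZ-pollog} gives $\phi^{(2n)}(0)\simeq \exp(-n^\gamma)$. Setting $F_K(n):=-\log\phi^{(2n)}(0)\simeq n^\gamma$, identity~(\ref{trick}) for the switch--walk--switch measure $q=\phi\ast\mu\ast\phi$ on $\mathbb Z\wr H$ reads
\[
q^{(n)}((\boldsymbol e_K,h)) \asymp \mathbf E_\mu\bigl(e^{-\kappa\sum_{x\in H}\ell(n,x)^\gamma}\boldsymbol 1_{\{X_n=h\}}\bigr).
\]
Summing over $h$ and applying the tail-truncation argument displayed immediately after~(\ref{trick-U1}) will reduce the corollary to estimating $q^{(n)}(e_{\mathbb Z\wr H})$, provided (\ref{trick-U1}) holds for $\mu$ and $-\log q^{(n)}(e)$ is regularly varying of index in $(0,1)$. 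For $\mu_\alpha$ this is immediate with $U_R=B(R)$; for $\mu_{S,a}$ one takes $U_R=\{h:N(h)\le R\}$ for a generating-tuple-adapted quasinorm $N$ satisfying $N(h_1h_2)\le N(h_1)+N(h_2)$ and $\mu_{S,a}(\{N>R\})\lesssim R^{-\min_i\alpha_i}$.

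Second, I compute $\Lambda_q$ through Theorem~\ref{th-WR}. The Folklore corollary after Lemma~\ref{lem-phiLambda} converts $\phi^{(2n)}(0)\simeq \exp(-n^\gamma)$ into $\Lambda_\phi(v)\simeq [\log(e+v)]^{-(1-\gamma)/\gamma}$, so $\Lambda_\phi^{-1}(s)\simeq \exp(s^{-\gamma/(1-\gamma)})$; the hypothesis $\mu^{(n)}(e)\simeq n^{-d/\alpha}$ (with $d/\alpha:=d(a)$ in the $\mu_{S,a}$ case, per \cite[Theorem~1.8]{SCZ-nil}) gives $\Lambda_\mu(v)\simeq v^{-\alpha/d}$ and $\Lambda_\mu^{-1}(s)\simeq s^{-d/\alpha}$. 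Setting $\theta:=\gamma/(1-\gamma)+d/\alpha$ and inserting these inverse profiles into both inequalities of Theorem~\ref{th-WR} yields the two-sided estimate
\[
\Lambda_q(v)\simeq [\log(e+v)]^{-1/\theta}.
\]

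Third, Theorems~\ref{th-Coulhon1}-\ref{th-Coulhon2} translate this profile bound into the return probability estimate. The implicit equation $t=\int_1^{1/\psi}\frac{ds}{2s\Lambda_q(4s)}\simeq [\log(1/\psi)]^{1+1/\theta}$ inverts to $\log(1/\psi)\simeq t^{\theta/(\theta+1)}$, and the elementary algebraic identity
\[
\frac{\theta}{\theta+1}=\frac{\gamma/(1-\gamma)+d/\alpha}{1+\gamma/(1-\gamma)+d/\alpha}=\frac{\alpha\gamma+d(1-\gamma)}{\alpha+d(1-\gamma)}
\]
identifies the resulting exponent with the one claimed. Combined with the first paragraph this yields the corollary.

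The main obstacle is verifying the tail hypothesis~(\ref{trick-U1}) for $\mu_{S,a}$: once the right quasinorm $N$ is chosen, the tail estimate follows coordinate by coordinate, but the choice of $N$ must simultaneously control the volume of $U_R$, and this requires invoking the volume estimates for nilpotent groups used in \cite{SCZ-nil}. The remaining steps---the spectral profile computation and the Coulhon inversion---are routine applications of the machinery developed earlier in the paper, and the regular-variation requirement on $\omega(n)=n^{\theta/(\theta+1)}$ is automatic since the exponent lies in $(0,1)$.
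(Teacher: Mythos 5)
Your proof is correct and takes essentially the same approach that the paper sketches in Section~\ref{sec-localtime}: you take $K=\mathbb Z$ with the slowly-decaying measure $\phi$ for which $\phi^{(2n)}(0)\simeq\exp(-n^\gamma)$, apply the local-time identity~(\ref{trick}) together with the truncation argument around~(\ref{trick-U1}) to reduce to estimating $q^{(n)}(e_{\mathbb Z\wr H})$, and then compute the profile via Theorem~\ref{th-WR} and invert through Theorems~\ref{th-Coulhon1}--\ref{th-Coulhon2}; the exponent arithmetic $\theta/(\theta+1)=(\alpha\gamma+d(1-\gamma))/(\alpha+d(1-\gamma))$ is right. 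The only caveat you flag -- verifying~(\ref{trick-U1}) for the anisotropic measure $\mu_{S,a}$ via a suitable quasinorm and the volume estimates from \cite{SCZ-nil} -- is a gap the paper itself glosses over with the phrase "this technique and remarks \ldots suffice," so your identification of it as the one nonroutine step is accurate.
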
 

\begin{rem} If $\mu(h)\asymp (1+|h|)^{-2-d}$ or $\mu=\mu_{S,a}$ with 
$a=(2,\dots,2)$, one gets
$$\mathbf E_{\mu}(e^{-\kappa\sum_H\ell(n,h)^\gamma})\simeq 
\exp\left(-n^{(2\gamma+d(1-\gamma))/(2+d(1-\gamma))}[\log n]^{d(1-\gamma)/(2+d(1-\gamma))}\right)
.$$
\end{rem}

\appendix

%\appendixpage

\section{ Appendix: Radial power laws on groups} \setcounter{equation}{0}
In this appendix, we compute the $L^p$-profiles  $\Lambda_{p,\phi}$
for  radial ``power law'' probability measures on certain groups.

\subsection{Norm-radial power laws}
Let $G$ be a countable group. Let $N: G\ra [0,\infty)$ be such that 
$N(e)=0$, $N(x^{-1})=N(x)$ and $N(xy)\le C_N(N(x)+N(y))$. Set 
$$V_N(r)=|\{x\in G:N(x)\le r\}|, \;\;B_N(m)=\{x\in G:N(x)\le m\}$$ and
$\mathbf \nu_m=V_N(m)^{-1}\mathbf 1_{B(m)}$. 
For $\alpha>0$, set $$\phi_\alpha=c_\alpha
\sum_1^\infty 4^{-\alpha k}\mathbf \nu_{4^k},\;\;c_\alpha= (4^\alpha-1)/(4^\alpha-2).$$ 

It is obvious that
\begin{equation}\label{pseudoPalpha}
\forall\,r=4^k,\;\;
\sum_{x,y}|f(xy)-f(x)|^p\nu_r(y) 
\le c_\alpha r^{\alpha } \sum_{x,y}|f(x)-f(xy)|^p\phi_\alpha(y).
\end{equation}
Let $W_N$ be the inverse function of the modified volume function defined 
by $\mathfrak V_N(r)= V_N(4^k)$ if $4^k\le r<4^{k+1}$, i.e., 
$\mathfrak W_N(t)=\inf\{s: \mathfrak V_N(s)>t\}$. 
Note that $\mathfrak V_N\simeq V_N$. 

\begin{pro} \label{pro-JLalpha}
Referring to the setup introduced above, for any $\alpha>0$ and 
$p\in [1,\infty)$, we have
$$\Lambda_{p,\phi_\alpha}(v)\ge \frac{1}{c_\alpha 8^p 
\mathfrak W_N(2^pv)^{\alpha}}.$$ 
\end{pro}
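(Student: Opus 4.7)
The plan is to reduce the lower bound on $\Lambda_{p,\phi_\alpha}$ to a lower bound on the analogous quantity for the uniform measure $\nu_r$ on a carefully chosen ball $B_N(r)$, and then transfer back to $\phi_\alpha$ using the pseudo-Poincar\'e inequality (\ref{pseudoPalpha}). Fix $f$ with support $\Omega\subset G$ of cardinality at most $v$ and with $\|f\|_p=1$. Choose $k$ to be the least integer such that $V_N(4^k)>2^pv$ and set $r=4^k$; by the definition of $\mathfrak W_N$ (which is the inverse of the step function $\mathfrak V_N$ that equals $V_N(4^k)$ on $[4^k,4^{k+1})$), this choice gives exactly $r=\mathfrak W_N(2^pv)$.

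The heart of the argument is a simple counting estimate. For any $x\in\Omega$, the set $\{y\in B_N(r):xy\in\Omega\}$ has cardinality at most $|\Omega|\le v\le 2^{-p}V_N(r)$, so for at least $(1-2^{-p})V_N(r)$ elements $y\in B_N(r)$ we have $xy\notin\Omega$, hence $f(xy)=0$ and $|f(xy)-f(x)|^p=|f(x)|^p$. Therefore
\[
\sum_{y}|f(xy)-f(x)|^p\,\nu_r(y)\ge (1-2^{-p})|f(x)|^p.
\]
Summing over $x\in\Omega$ (and using that the terms for $x\notin\Omega$ are non-negative) yields
\[
\sum_{x,y}|f(xy)-f(x)|^p\,\nu_r(y)\ge (1-2^{-p})\|f\|_p^p.
\]

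Now apply the pseudo-Poincar\'e inequality (\ref{pseudoPalpha}) with $r=4^k$; since $\mathcal E_{p,\phi_\alpha}(f)=\tfrac12\sum_{x,y}|f(xy)-f(x)|^p\phi_\alpha(y)$, this gives
\[
(1-2^{-p})\|f\|_p^p\le 2c_\alpha r^\alpha\,\mathcal E_{p,\phi_\alpha}(f),
\]
and hence
\[
\mathcal E_{p,\phi_\alpha}(f)\ge\frac{1-2^{-p}}{2c_\alpha\,\mathfrak W_N(2^pv)^\alpha}\,\|f\|_p^p.
\]
Taking the infimum over admissible $f$, and using the elementary inequality $(1-2^{-p})/2\ge 1/8^p$ valid for all $p\ge 1$ (straightforward: the ratio $(1-2^{-p})8^p/2$ is increasing in $p$ and equals $2$ at $p=1$), yields the stated lower bound on $\Lambda_{p,\phi_\alpha}(v)$.

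The only delicate point is the correct alignment of the dyadic scale $r=4^k$ (required so that the pseudo-Poincar\'e inequality (\ref{pseudoPalpha}) is applicable) with the definition of $\mathfrak W_N$; the counting step itself is essentially trivial once the scale $r$ is chosen. No subtlety hides in the constants, which fall out directly.
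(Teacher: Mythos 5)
Your proof is correct, and it takes a genuinely different (and simpler) route than the paper.

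The paper follows the scheme of \cite{CSCiso}: it introduces the truncations $f_k=(f-2^k)^+\wedge 2^k$, combines the $L^\infty$ bound $\|f_k*\nu_r\|_\infty\le 2^kV_N(r)^{-1/p}v^{1/p}$ with a weak-type estimate on $|\{\,|f-f*\nu_r|\ge\lambda/2\}|$, reconstructs $\|f\|_p^p$ from the level sets $|\{f\ge 2^{k+1}\}|$, and finally sums over $k$ using the BCLS co-area inequality (\ref{Hbcls}). You instead observe that since the test functions in the definition (\ref{p-profile}) of $\Lambda_{p,\phi_\alpha}$ already have support $\Omega$ of size at most $v$, no slicing is needed: for each $x\in\Omega$ and $r$ with $V_N(r)>2^pv$, a fraction at least $1-2^{-p}$ of the $y\in B_N(r)$ satisfy $xy\notin\Omega$ (by injectivity of $y\mapsto xy$), giving $\sum_y|f(xy)-f(x)|^p\nu_r(y)\ge(1-2^{-p})|f(x)|^p$ pointwise and hence $(1-2^{-p})\|f\|_p^p$ after summation. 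The pseudo-Poincar\'e step (\ref{pseudoPalpha}) and the choice $r=\mathfrak W_N(2^pv)$ are then identical to the paper's. Your constant verification $(1-2^{-p})/2\ge 1/8^p$ is correct (and in fact yields a constant slightly better than the paper's $8^{-p}$). What your approach buys is the elimination of the BCLS inequality and the truncation bookkeeping; what the paper's scheme buys, in more general settings, is applicability when one controls a norm of $f$ rather than the cardinality of its support — but for the quantity $\Lambda_{p,\phi}$ defined here that generality is not needed, so your argument is the cleaner one for this statement.
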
 
\begin{proof}
The argument is well-known and is
given here for convenience of the reader. See also \cite{CSCiso,PSCnote}.
Consider a function $f\ge 0$ with $|\mbox{support}(f)|\le v$. 
For any $\lambda>0$, write
$$|\{ f\ge \lambda\}|\le |\{ |f-f*\nu_r|\ge \lambda/2\}|+
|\{ |f*\nu_r|\ge \lambda/2\}|$$
and note that $\|f*\nu_r\|_\infty\le V_N(r)^{-1/p}\|f\|_p.$

Recall the notation $f_k=(f-2^k)^+\wedge 2^k$ and observe that 
$\|f_k\|_p\le 2^{k} v^{1/p} $. It follows that 
$\|f_k*\nu_r\|_\infty\le  2^{k} V_N(r)^{-1/p} v^{1/p}.$     
Pick $r$ so that  $V_N(r) > 2^p v$ and pick $\lambda= 2^{k}$. We have
$$|\{f_k\ge 2^{k}\}|\le |\{ |f-f*\nu_r|\ge \lambda/2\}|\le 
 2^{-p(k-1)} r^{\alpha} \mathcal E_{p,\phi_\alpha}(f_k).$$
Recall that $|\{f\ge 2^{k+1}\}|= |\{f\ge 2^k\}|$ and write
\begin{eqnarray*}
\|f\|_p^p &\le & 8^p\sum _k 2^{p(k-1)} |\{f\ge 2^{k+1}\}|\\
&\le & c_\alpha 8^p r^\alpha
\sum_k \mathcal E_{p,\phi_\alpha}(f_k)\le
c_\alpha 8^p r^\alpha E_{p,\phi_\alpha}(f)
\end{eqnarray*}
where, for the last step, we have used (\ref{Hbcls}).
Given the choice of $r$ as a function of $v$, this gives
$$\Lambda_{p,\phi_\alpha}(v) \ge c_\alpha ^{-1}8^{-p} \mathfrak W_N(2^p v)^{-\alpha}$$
which is the desired inequality.
\end{proof}

In the case when $N=|\cdot|_N$ is the word-length associated with a finite 
symmetric generating set $S$, write $\mathcal W$ for the inverse function 
of the volume growth $V=V_S$.  Proposition \ref{pro-JLalpha} gives
$$
\Lambda_{p,\phi_\alpha} \gtrsim  \mathcal W^{-\alpha},\;\; p\ge 1,\;\;\alpha >0..$$
However, these inequalities compete with those deduced in a similar way from 
\begin{equation}\label{peudoP}
\forall\,r>0,\;\;
\|f-f_r\|^p_p \le C(p,S,\alpha)\; r^{p} \sum_{x,y}|f(x)-f(xy)|^p\phi_\alpha(y).
\end{equation} 
This inequality is an immediate consequence of the well-known 
pseudo-Poinca\'e inequality
$$
\forall\,r>0,\;\;
\|f-f_r\|^p_p \le C(p,S)\; r^{p} \sum_{x,y}|f(x)-f(xy)|^p\mathbf u (y)
$$
which follows from the definition of the word length
and a simple telescoping sum argument. See, e.g., \cite{CSCiso,PSCnote}. 

It follows that we have 
$$\Lambda_{p,\phi_\alpha} \gtrsim \left\{\begin{array}{ll}
\mathcal W^{-\alpha} & \mbox{ if } 
\alpha\in (0,p]\\
\mathcal W^{-p} &\mbox{ if } \alpha>p .\end{array}\right.
$$
In fact, because of the Dirichlet form comparison 
$\mathcal E_{\phi_\alpha}\simeq \mathcal E_{\mathbf u}$ 
which holds for $\alpha>2$ (see, e.g., \cite{PSCstab}), we must have 
$\Lambda_{\phi_\alpha}\simeq \Lambda_G$ for $\alpha>2$. Similarly, 
for $\alpha>p$,
we have 
$$\forall f,\;\;\sum_{x,y}|f(xy)-f(x)|^p\phi_\alpha(y) 
\asymp \sum_{x,y}|f(xy)-f(x)|^p\mathbf u (y)$$
and thus $\Lambda_{p,\phi_\alpha}\simeq \Lambda_{p,G}$.
In the case $p=1$, this implies that $J_{\phi_\alpha}\simeq J_G$ 
for all $\alpha>1$. This 
discussion is captured in the following result.
\begin{theo} \label{th-LW}
Let $G$ be a finitely generated group equipped with a
finite symmetric generating set and associated word-length. Set 
$\phi_\alpha=\sum_1^\infty 4^{-k \alpha}\mathbf u_{4^k}$ where $\mathbf u_r$ is the
uniform measure on the ball of radius $r$ in $G$. Let $W$ be the inverse 
function of the volume growth function of $G$.
\begin{itemize}
\item For $1\le p<\alpha<\infty$,  $\Lambda_{p,\phi_\alpha}\simeq \Lambda_{p,G}$. 
\item For $\alpha\in (0,p)$,  
we always have
$\mathcal W^{-\alpha}\lesssim \Lambda_{p,\phi_\alpha}\lesssim 
\Lambda_{p,G}^{\alpha/p}.$
\item 
If for a given $p\in [1,\infty)$ we have $\Lambda_{p,G}\simeq \mathcal W^{-p}$ 
then 
$$ \forall\, \alpha\in (0,p),\;\;\Lambda_{p,\phi_\alpha}\simeq \mathcal 
W^{-\alpha}.$$ 
\end{itemize}
\end{theo}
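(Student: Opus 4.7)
The plan is to compare the Dirichlet form $\mathcal E_{p,\phi_\alpha}$ against both $\mathcal E_{p,\mathbf u}$ and the trivial $L^p$ bound $|f(xy)-f(x)|^p\le 2^{p-1}(|f(xy)|^p+|f(x)|^p)$, and then read off the corresponding bounds on $\Lambda_{p,\phi_\alpha}$. The lower bound $\Lambda_{p,\phi_\alpha}\gtrsim \mathcal W^{-\alpha}$ appearing in the second item is already furnished by Proposition \ref{pro-JLalpha} applied with $N=|\cdot|$, so only the upper bounds, together with the complementary lower bound in the first item, require fresh arguments. Throughout I would expand $\mathcal E_{p,\phi_\alpha}=c_\alpha\sum_{k\ge 1}4^{-\alpha k}\mathcal E_{p,\mathbf u_{4^k}}$ and treat each term separately.

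For the first item, the goal is the two-sided Dirichlet form comparison $\mathcal E_{p,\phi_\alpha}\simeq \mathcal E_{p,\mathbf u}$, from which $\Lambda_{p,\phi_\alpha}\simeq \Lambda_{p,G}$ is immediate. The direction $\mathcal E_{p,\phi_\alpha}\lesssim \mathcal E_{p,\mathbf u}$ will come from the pseudo-Poincar\'e inequality $\mathcal E_{p,\mathbf u_{4^k}}(f)\le C\,4^{pk}\mathcal E_{p,\mathbf u}(f)$ underlying (\ref{peudoP}), together with the convergence of $\sum_k 4^{(p-\alpha)k}$ when $\alpha>p$. The reverse direction uses the pointwise bound $\phi_\alpha\ge c_\alpha 4^{-\alpha}\mathbf u_4$; combined with the elementary observation that $\mathbf u\le (|B(4)|/|S^*|)\mathbf u_4$ as measures (since $S^*\subset B(4)$), this gives $\mathcal E_{p,\phi_\alpha}\ge c\,\mathcal E_{p,\mathbf u_4}\ge c'\,\mathcal E_{p,\mathbf u}$.

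For the second item with $\alpha\in(0,p)$, I would test $\mathcal E_{p,\phi_\alpha}$ on a near-extremal function $f$ for $\Lambda_{p,G}(v)$, normalized so that $\|f\|_p=1$ and $|\mathrm{support}(f)|\le v$, for which $\mathcal E_{p,\mathbf u}(f)\simeq \Lambda_{p,G}(v)$. The key is to combine the two available upper bounds on each term, namely the pseudo-Poincar\'e bound $\mathcal E_{p,\mathbf u_{4^k}}(f)\le C\,4^{pk}\mathcal E_{p,\mathbf u}(f)$ and the trivial bound $\mathcal E_{p,\mathbf u_{4^k}}(f)\le 2\|f\|_p^p=2$, and then to split the geometric sum at the threshold $k_0$ chosen so that $4^{pk_0}\simeq 1/\Lambda_{p,G}(v)$. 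A short computation shows that both halves of the sum evaluate to $\simeq \Lambda_{p,G}(v)^{\alpha/p}$, yielding $\Lambda_{p,\phi_\alpha}(v)\lesssim \Lambda_{p,G}(v)^{\alpha/p}$. The third item is then an immediate consequence of Proposition \ref{pro-JLalpha} together with this upper bound, since the hypothesis $\Lambda_{p,G}\simeq \mathcal W^{-p}$ collapses both sides to $\mathcal W^{-\alpha}$.

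The main technical obstacle is the balancing step in the splitting argument of the second item: one must verify that the head $\sum_{k<k_0}4^{(p-\alpha)k}\mathcal E_{p,\mathbf u}(f)\simeq 4^{(p-\alpha)k_0}\Lambda_{p,G}(v)$ and the tail $\sum_{k\ge k_0}4^{-\alpha k}\simeq 4^{-\alpha k_0}$ both land on $\Lambda_{p,G}(v)^{\alpha/p}$ with matching constants. Because $\alpha\in(0,p)$ keeps each of the two geometric series strictly convergent on its own side of $k_0$, no logarithmic losses intrude and the balance at $k_0$ is exact; the restriction $\alpha<p$ is sharp here, mirroring the condition $\alpha>p$ that is needed in the first item.
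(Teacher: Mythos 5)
Your proof is correct and takes essentially the same route as the paper: the Dirichlet form comparison $\mathcal E_{p,\phi_\alpha}\asymp\mathcal E_{p,\mathbf u}$ for $\alpha>p$, Proposition \ref{pro-JLalpha} for the lower bound in the second and third items, and a near-field/far-field split for the upper bound when $\alpha\in(0,p)$. Your version simply carries out that split directly on the dyadic decomposition $\phi_\alpha=c_\alpha\sum_k 4^{-\alpha k}\mathbf u_{4^k}$ (with threshold $4^{pk_0}\simeq 1/\Lambda_{p,G}(v)$), which amounts to an explicit, self-contained instance of the bound $\Lambda_{p,\phi}(v)\lesssim M_{p,\rho}\bigl(\Lambda_{p,\mathbf u}(v)^{-1/p}\bigr)^{-1}$ from Theorem \ref{pro-compphiG} that the paper invokes wholesale.
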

Note that the case $\alpha=p$ is excluded from this statement.
Note also that the wreath product construction provides many examples of groups 
for which  $\Lambda_{p,G}\not\simeq \mathcal W^{-p}$.
\begin{proof} The case when $\alpha>p$ is explained above as well as
the lower bounds when $\alpha\in (0,p]$. The 
upper bound for $\alpha \in (0,p)$ follows from Theorem 
\ref{pro-compphiG}.
\end{proof}

\begin{exa} Polycyclic groups satisfy $\Lambda_{p,G}\simeq 
\mathcal W^{-p}$ for each  
$p\in [1,\infty)$. 
The lower bound follows from the argument of \cite{CSCiso} as explained above.
The upper bound is best derived from the existence of adapted F\o lner couples,
a technique developed and explained in \cite{CGP}.  Other groups for which
$\Lambda_{p,G}\simeq \mathcal W^{-p}$ include the Baumslag solitar groups 
$\mathbf{BS}(1,m)=\langle a,b: aba^{-1}=b^m\rangle$ and the lamplighter groups
$F\wr \mathbb Z $ with $F$ finite.  Romain Tessera \cite[Theorem 4]{Tessera}
describes a large class of groups of exponential volume growth 
(Geometrically Elementary Solvable or GES groups) which satisfy 
$\Lambda_{p,G}\simeq \mathcal W^{-p}$. 
Note that what Tessera denotes by $j_{p,G}$ is
$1/\Lambda_{p,G}^{1/p}$. 
\end{exa}

\begin{rem} Recall the two sided Cheeger inequality (\ref{Cheegerpq}), i.e.,
$$c(p,q)\Lambda^{q/p}_{p,\phi}\le \Lambda_{q,\phi} \le C(p,q)\Lambda_{p,\phi},\;\;1\le p\le q<\infty.$$
Let $G$ be a group such that $\Lambda_{p,G}\simeq \mathcal 
W^{-p}$, $p\ge 1$ and fix 
$\alpha\in (0,\infty)$.  By Theorem \ref{th-LW}, if $p\in [1,\alpha)$,
$\Lambda_{p,\phi_\alpha}\simeq \mathcal W^{-p}$ but if $p>\alpha$, 
$\Lambda_{p,\phi_\alpha}\simeq \mathcal W^{-\alpha}$. 
In particular, if $p,q>\alpha$,
then $\Lambda_{p,\phi_\alpha}\simeq \Lambda_{q,\phi_\alpha}$ but, if $p,q\in [1,\alpha)$ then $\Lambda^{q/p}_{p,G}\simeq \Lambda_{p,G}$. In the case 
$1\le p<\alpha <q<\infty$, neither of the two sides of the Cheeger inequality 
is optimal. 
\end{rem}

\subsection{Word-length power laws on group with polynomial volume growth}
We now focus on
the case when $N(x)=|x|_S$ is the word-length of $x$ with respect 
to a finite symmetric generating set $S$ on a group of polynomial 
volume growth. Dropping the reference to the set $S$, we set
$V(r)=|\{x: |x|\le r\}|$ and assume that $V(r)\simeq r^{D}$, i.e., 
we assume that the group $G$ has polynomial volume growth of degree $D$. 
In this case, we can use a more refined version of the 
measure $\phi_\alpha$ by setting
$$\phi_\alpha=c_\alpha\sum_1^\infty k^{-\alpha-1} \mathbf u_{k}, \;\;
c_\alpha^{-1}=\sum_1^\infty k^{-1-\alpha}.$$ 
It is easy to use an Abel summation argument to check that 
$$\forall x\in G,\;\;\phi_\alpha(x)\asymp (1+|x|)^{-\alpha-D} .$$ 
(the same  holds true for the measure  
$c'_\alpha\sum_1^\infty 4^{-\alpha k} \mathbf u_{4^k}$). 
\begin{pro} \label{pro-relppi}
Let $G$ be a group with polynomial volume growth.
Then, for each $p\ge 1$ and $r>s\ge 1$, we have
$$\sum_{x,y}|f(xy)-f(x)|^p\mathbf u_r(y) \le C(G,p) (r/s)^p
\sum_{x,y}|f(xy)-f(x)|^p\mathbf u_s(y).$$
\end{pro}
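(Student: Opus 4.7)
The plan is to telescope any step $y\in B(r)$ through $N=\lceil r/s\rceil$ substeps of size at most $s$, apply Jensen's inequality, and then average over $y$. Fix for each $y\in B(r)$ a decomposition $y=z_1(y)\cdots z_N(y)$ with $z_i(y)\in B(s)$ (obtained by cutting a geodesic from $e$ to $y$ into $N$ consecutive pieces of length $\le s$), and let $w_k=z_1\cdots z_k$. Applying Jensen to the telescoping identity $f(xy)-f(x)=\sum_{i=1}^N[f(xw_{i-1}z_i)-f(xw_{i-1})]$ gives $|f(xy)-f(x)|^p\le N^{p-1}\sum_{i=1}^N |f(xw_{i-1}z_i)-f(xw_{i-1})|^p$, and summing over $x$ while using left-translation invariance to reindex each inner sum via $x'=xw_{i-1}$ yields
\begin{equation*}
\sum_x|f(xy)-f(x)|^p\le N^{p-1}\sum_{i=1}^N\sum_x|f(xz_i(y))-f(x)|^p.
\end{equation*}
Averaging over $y\sim\mathbf u_r$, setting $g(z):=\sum_x|f(xz)-f(x)|^p$, and denoting by $\mu_i$ the law of $z_i(y)$ when $y\sim\mathbf u_r$, this becomes
\begin{equation*}
2\mathcal E_{p,\mathbf u_r}(f)\le N^{p-1}\sum_{i=1}^N\sum_{z\in B(s)}\mu_i(z)\,g(z).
\end{equation*}

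The crux, and the only place where the polynomial volume growth hypothesis enters essentially, is to choose the decomposition so that $\mu_i(z)\le C(G)\,\mathbf u_s(z)$ for all $i$ and all $z\in B(s)$, with $C(G)$ depending only on the doubling constant. I would use a randomized decomposition: given $y$, draw the path $(w_0,\ldots,w_N)$ uniformly at random among all length-$N$ walks from $e$ to $y$ in the Cayley graph of $(G,B(s))$. A direct combinatorial computation then gives
\begin{equation*}
\mu_i(z)=\frac{1}{V(s)}\sum_{y\in B(r)}\mathbf u_r(y)\,\frac{A(z,y;i)}{\mathbf u_s^{*N}(y)},\qquad A(z,y;i):=\sum_u \mathbf u_s^{*(i-1)}(u)\,\mathbf u_s^{*(N-i)}(z^{-1}u^{-1}y),
\end{equation*}
and the marginal bound reduces to the uniform estimate $A(z,y;i)\le C(G)\,\mathbf u_s^{*N}(y)$ for $z\in B(s)$ and $y\in B(r)\subset B(Ns)$. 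This uniform comparability is the hard part; it follows from the two-sided Gaussian heat kernel bounds for symmetric random walks on groups of polynomial volume growth (Hebisch--Saloff-Coste), since plugging in the Gaussian asymptotics controls the ratio by $\exp(O(|z||y|/(Ns^2)))=O(1)$ in view of $|z||y|\le sr$ and $Ns\asymp r$.

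Granting the marginal bound, the right-hand side is at most $C(G)N^{p-1}\cdot N\cdot \sum_z\mathbf u_s(z)g(z)=2C(G)N^p\,\mathcal E_{p,\mathbf u_s}(f)$, and since $N\le r/s+1\le 2r/s$ for $r\ge s\ge 1$, this yields $\mathcal E_{p,\mathbf u_r}(f)\le 2^{p+1}C(G)(r/s)^p\,\mathcal E_{p,\mathbf u_s}(f)$, which is the stated inequality with $C(G,p)=2^{p+1}C(G)$.
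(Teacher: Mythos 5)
Your high-level strategy---telescope a long step into short ones, apply Jensen, reindex by translation-invariance, and then average over a randomized decomposition to compare with $\mathbf u_s$---is exactly right and is also the skeleton of the paper's argument. The gap is in the choice of decomposition and in the claimed marginal bound. With your decomposition (a uniform random $N$-step walk from $e$ to $y$ in the Cayley graph of $(G,B(s))$, $N=\lceil r/s\rceil$), the marginal $\mu_i$ of a single step is \emph{not} pointwise comparable to $\mathbf u_s$ with a constant depending only on $G$. Concretely, take $G=\mathbb Z$, $r=2s$, $N=2$, $i=2$, $z=s$: conditionally on $y\ge 0$, $\mathbf P(z_2=s\mid y)=\frac{1}{2s+1-y}$, so $\mu_2(s)=\frac{1}{4s+1}\sum_{y=0}^{2s}\frac{1}{2s+1-y}\asymp\frac{\log s}{s}$, whereas $\mathbf u_s(s)=\frac{1}{2s+1}\asymp\frac{1}{s}$. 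The ratio $\mu_2(s)/\mathbf u_s(s)\asymp\log s$ is unbounded, so the reduction you propose cannot go through with a constant $C(G)$. Correspondingly, the inequality $A(z,y;i)\le C(G)\,\mathbf u_s^{*N}(y)$ fails near the ``boundary'' (e.g.\ $A(s,2s;2)=\mathbf u_s(s)=V(s)\,\mathbf u_s^{*2}(2s)$ on $\mathbb Z$). Even away from this boundary effect, the justification you give is not sufficient: the Hebisch--Saloff-Coste two-sided Gaussian bounds have \emph{different} constants in the exponent on the upper and lower sides, so taking a ratio produces a spurious $\exp(c'|y|^2/(Ns^2))$ factor, which is of order $\exp(c'N)$ when $|y|\asymp Ns$. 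Your ``ratio is $\exp(O(|z||y|/(Ns^2)))$'' computation presupposes matching exponential constants, which HSC does not provide; one would need a parabolic Harnack inequality (a heavier tool) to get a genuine near-diagonal comparison, and even that would not repair the boundary failure exhibited above.

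The paper's proof uses the same telescoping/Jensen/averaging structure but a cruder and more robust randomization that sidesteps both problems. It fixes a geodesic decomposition $y=y_1\cdots y_k$ into pieces of length at most $s'\approx s/3$ (so $k\lesssim r/s$), and then \emph{jitters} each interior breakpoint independently by a uniform $\xi_j\in B(s')$, setting $z_i=\xi_{i-1}^{-1}y_i\xi_i$ (with $\xi_0=\xi_k=e$). Because $|z_i|\le 3s'\le s$ and, for each fixed $\xi_{i-1}$, the map $\xi_i\mapsto z_i$ is injective into $B(s)$, each step has marginal density at most $V(s')^{-1}$ on a subset of $B(s)$. Comparing to $\mathbf u_s$ costs exactly a factor $V(s)/V(s')\le C(G)$, which is the doubling constant, and this bound is uniform in $i$ (including $i=1$ and $i=k$, which are only jittered on one side but still carry a factor $V(s)/V(s')$). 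No heat kernel estimate is needed. If you want to salvage your approach you would need to either (a) build slack into the step size (say decompose with steps in $B(s/3)$ and random-walk in $B(s/3)$ before comparing to $\mathbf u_s$) so the boundary effect disappears, or (b) replace the Gaussian bound argument by an actual parabolic Harnack comparison; the paper's jittering does (a) in the simplest possible way.
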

\begin{proof} For any $1\le s\le s_0$, this follows from the usual
Dirichlet argument using paths. So, we can assume $s>3$ and let $s'$ be the 
largest integer smaller than $s/3$.   For any $y\in G$, write
$y=y_0y_1\dots y_k$ with $y_0=e$, $|y_i|\le s'$, $1\le i\le k$, and
$k\le  9|y|/s$.  For any finite supported function$f$,
$\xi_1,\dots, \xi_{k-1}\in G$ and $|y|\le r$,  
we have
$$|f(xy)-f(x)|^p\le  9 (r/s)^{p-1}\sum_1^k|f(z_0\dots z_i)-f(xz_0\cdots z_{i-1})|^p$$
where $z_i= \xi_{i-1}^{-1}y_i\xi_i$ with $\xi_0=\xi_k=e$. 
Summing over $x\in G$ gives
$$\sum_x|f(xy)-f(x)|^p\le  9 (r/s)^{p-1}\sum_1^k\sum_x|f(x \xi_{i-1}^{-1}y_i\xi_i)-f(x)|^p.$$
We now average this inequality over  
$$(\xi_0,\xi_1,\dots,\xi_{k-1},\xi_k)\in \{e\}\times B(s')\times \dots \times B(s')\times \{e\}$$
This gives
\begin{eqnarray*} \sum_x|f(xy)-f(x)|^p &\le & 9(r/s)^{p-1}\left( 
V(s')^{-1}\sum_x\sum_{|\xi|\le s'}|f(x y_1\xi)-f(x)|^p\right.\\
&& +V(s')^{-2}\sum_{i=2}^{k-1}\sum_x\sum_{\xi,\zeta \in B(s')}|f(x \xi^{-1}y_i\zeta)-f(x)|^p\\
&& \left. + V(s')^{-1}\sum_x\sum_{|\xi|\le s'}|f(x \xi^{-1}y_{k})-f(x)|^p 
\right).
\end{eqnarray*}
Obviously, we have
$$
\sum_x\sum_{|\xi|\le s'}|f(x y_1\xi)-f(x)|^p
\le V(s)\sum_{x,z}|f(x z)-f(x)|^p\mu_s(z)$$
and
$$\sum_x\sum_{|\xi|\le s'}|f(x \xi^{-1}y_k)-f(x)|^p
\le V(s)\sum_{x,z}|f(x z)-f(x)|^p\mu_s(z).$$
Similarly, we have
\begin{eqnarray*}
\sum_x\sum_{\xi,\zeta \in B(s')}|f(x \xi^{-1}y_i\zeta)-f(x)|^p
&\le & V(s)\sum_{\xi\in B(s')}\sum_{x,z}
|f(x z)-f(x)|^p\mathbf u_s(z)\\
&\le &
V(s)V(s')\sum_{x,z}
|f(x z)-f(x)|^p\mathbf u_s(z).
\end{eqnarray*}
Since $k\le 9(r/s)$, putting these inequalities together yields
$$\sum_x|f(xy)-f(x)|^p \le  9[V(s)/V(s')] (r/s)^{p}
\sum_x|f(xz)-f(x)|^p\mathbf u_s(z).$$
Averaging over $y\in B(r)$ gives the desired inequality.
\end{proof}
\begin{cor} \label{cor-ppi}
On group with polynomial volume growth, for $p\ge 1$ and 
$\alpha>0$  there exists a constant $C(G,p,\alpha)$ such that
$$\|f-f*\mathbf u_r\|_p^p\le C(G,p,\alpha) Q_{p,\alpha}(r) 
\sum_{x,z}|f(xy)-f(x)|^p\phi_\alpha(z)$$
where 
$$Q_{p,\alpha}(r)= \left\{\begin{array}{ll} r^p &\mbox{ if } \alpha>p,\\
r^p/\log(e+ r) &\mbox{ if } \alpha=p\\
r^{\alpha } & \mbox{ if } \alpha<p.
\end{array}\right.$$
\end{cor}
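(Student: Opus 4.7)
The plan is to reduce the claimed pseudo-Poincar\'e inequality in two natural steps. First I will dominate the left-hand side by the $L^p$-Dirichlet form at the matching scale $r$, namely $\|f - f*\mathbf u_r\|_p^p \le 2\mathcal E_{p,\mathbf u_r}(f)$. This first bound is essentially trivial: writing $(f - f*\mathbf u_r)(x) = V(r)^{-1}\sum_{|y|\le r}(f(x) - f(xy))$ and applying Jensen's inequality to the inner average gives the pointwise estimate $|f - f*\mathbf u_r|^p(x) \le V(r)^{-1}\sum_{|y|\le r}|f(x) - f(xy)|^p$, and summing in $x$ yields the claim. The whole point of the corollary is therefore the second step: comparing $\mathcal E_{p,\mathbf u_r}(f)$ with $\mathcal E_{p,\phi_\alpha}(f)$ and producing the correct scale factor $Q_{p,\alpha}(r)$.

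For this comparison I will use Proposition \ref{pro-relppi} in its equivalent reciprocal form $\mathcal E_{p,\mathbf u_k}(f) \ge C(G,p)^{-1}(k/r)^p\,\mathcal E_{p,\mathbf u_r}(f)$, valid for any integers $1 \le k \le r$, together with the explicit series $\mathcal E_{p,\phi_\alpha}(f) = c_\alpha \sum_{k\ge 1}k^{-\alpha-1}\mathcal E_{p,\mathbf u_k}(f)$. Substituting the pointwise lower bound into each term and keeping only the indices $k \le \lfloor r\rfloor$ produces
\[
\mathcal E_{p,\phi_\alpha}(f) \;\ge\; \frac{c_\alpha}{C(G,p)}\,r^{-p}\,\mathcal E_{p,\mathbf u_r}(f)\,\Sigma(r),\qquad \Sigma(r):=\sum_{k=1}^{\lfloor r\rfloor}k^{p-\alpha-1}.
\]
The rest is an elementary evaluation of $\Sigma(r)$: it is $\asymp r^{p-\alpha}/(p-\alpha)$ when $\alpha<p$, $\asymp \log(e+r)$ when $\alpha=p$, and bounded by a finite constant $\zeta_{\alpha,p}$ when $\alpha>p$. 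Rearranging gives $\mathcal E_{p,\mathbf u_r}(f)\le C(G,p,\alpha)\,Q_{p,\alpha}(r)\,\mathcal E_{p,\phi_\alpha}(f)$ with exactly the three-piece function stated, and chaining with Step 1 finishes the proof.

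No substantive obstacle is expected; the only point of care is the dependence of the implicit constant on $\alpha$. Indeed the factor $1/(p-\alpha)$ produced by the sum diverges as $\alpha\to p^-$ and the constant $\zeta_{\alpha,p}$ diverges as $\alpha\to p^+$, both singularities occurring precisely at the critical value where $Q_{p,\alpha}$ itself picks up its logarithmic loss, which is entirely consistent with the $\alpha$-dependence of $C(G,p,\alpha)$ allowed in the statement. The degenerate regime of very small $r$ is trivial and can be absorbed into the constant.
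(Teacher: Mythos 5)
Your proof is correct and follows essentially the same route the paper has in mind: reduce $\|f-f*\mathbf u_r\|_p^p$ to $\mathcal E_{p,\mathbf u_r}(f)$ by Jensen, then use Proposition \ref{pro-relppi} together with the series definition $\phi_\alpha=c_\alpha\sum_k k^{-\alpha-1}\mathbf u_k$ to compare with $\mathcal E_{p,\phi_\alpha}(f)$, the three regimes of $Q_{p,\alpha}$ emerging from the behaviour of $\sum_{k\le r}k^{p-\alpha-1}$. The only expositional difference is that the paper remarks that the cases $\alpha\ne p$ were already handled by the two pseudo-Poincar\'e inequalities established earlier in the appendix and runs the Proposition-plus-series argument only for the borderline $\alpha=p$, whereas you run that single computation uniformly for all $\alpha>0$; this is a slight streamlining, not a different argument.
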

\begin{proof}Only the case $\alpha=p$ needs a proof. This case follows 
immediately from the previous proposition and the definition of $\phi_\alpha$.
\end{proof}
\begin{rem} Fix a continuous increasing function $\ell$
such that $\ell(2t)\le C\ell(t)$ and
$\int^{\infty}\frac{ds}{s\ell(s)}<\infty$. Let 
$\phi$ be a symmetric probability measure on the group $G$ (which we assume 
to have polynomial volume growth of degree $d$) and such that
$$\phi \asymp \sum_1^\infty 
\frac{1}{k\ell(k)}\mathbf u_k.$$ 
Proposition \ref{pro-relppi} immediately gives,
$$\mathcal E_{\mathbf u_r}\leq C(G,p,\ell) r^p 
\left(\int_r^{\infty}\frac{s^{p-1}}{\ell(s)}ds\right)^{-1}\mathcal E_\phi.$$ 
This covers the different cases of Corollary \ref{cor-ppi}. When 
$\ell $ is slowly varying, this estimate is often not sharp and  
a sharp version is provided in \cite{SCZ-pollog}.
 \end{rem}

\begin{theo}\label{th-A-pol-Lambda}
On a group with polynomial volume growth of degree $D$
and for any $1\le p <\infty$, we have
$$ \Lambda_{p,\phi_\alpha}(v)\simeq
\left\{\begin{array}{ll} v^{-p/D} &\mbox{ if } \alpha>p,\\
v^{-p/D}\log(e+ v) &\mbox{ if } \alpha=p,\\
v^{-\alpha /D} & \mbox{ if } 0<\alpha<p,\end{array}\right.$$
\end{theo}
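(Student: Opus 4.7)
The theorem splits into three regimes according to whether $\alpha$ is supercritical, critical, or subcritical with respect to $p$. In the non-critical cases $\alpha\neq p$, I combine Theorem~\ref{th-LW} with the identity $\Lambda_{p,G}(v)\simeq v^{-p/D}$ and $\mathcal W(v)\simeq v^{1/D}$ valid on any group of polynomial volume growth of degree $D$ (the lower bound $\Lambda_{p,G}\gtrsim \mathcal W^{-p}$ is classical \cite{CSCiso}; the matching upper bound comes from adapted F\o lner couples as in \cite{CGP}). For $\alpha>p$ the first bullet of Theorem~\ref{th-LW} gives $\Lambda_{p,\phi_\alpha}\simeq \Lambda_{p,G}\simeq v^{-p/D}$, and for $\alpha<p$ the sandwich $\mathcal W^{-\alpha}\lesssim \Lambda_{p,\phi_\alpha}\lesssim \Lambda_{p,G}^{\alpha/p}$ collapses to $v^{-\alpha/D}$ from both sides. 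So the real content of the argument is the critical case $\alpha=p$, which I treat separately.

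\textbf{Lower bound at $\alpha=p$.} I rerun the truncation argument used in Proposition~\ref{pro-JLalpha}, but replacing the crude pseudo-Poincar\'e input (\ref{pseudoPalpha}) by the sharper Corollary~\ref{cor-ppi}, which asserts $\|f-f\ast\mathbf u_r\|_p^p\le C r^p(\log(e+r))^{-1}\mathcal E_{p,\phi_p}(f)$. Given $f\ge 0$ with $|\mbox{supp}(f)|\le v$, I pick $r$ so that $V(r)>2^p v$ (whence $r\simeq v^{1/D}$); the truncated levels $f_k=(f-2^k)^+\wedge 2^k$ satisfy $\|f_k\ast\mathbf u_r\|_\infty\le V(r)^{-1/p}\|f_k\|_p\le 2^{k-1}$, so Markov's inequality together with Corollary~\ref{cor-ppi} yields $|\{f\ge 2^{k+1}\}|\lesssim r^p(\log r)^{-1}\,2^{-kp}\mathcal E_{p,\phi_p}(f_k)$. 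Reassembling through the dyadic identity $\|f\|_p^p\asymp \sum_k 2^{(k+1)p}|\{f\ge 2^{k+1}\}|$ and the BCLS co-area inequality~(\ref{Hbcls}) produces $\|f\|_p^p\lesssim r^p/\log r\cdot \mathcal E_{p,\phi_p}(f)$, i.e.\ $\Lambda_{p,\phi_p}(v)\gtrsim v^{-p/D}\log v$.

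\textbf{Upper bound at $\alpha=p$.} Theorem~\ref{pro-compphiG} is too weak here since $M_{p,\rho_p}(t)\simeq 1+\log(1+t)$ yields only the trivial rate $\Lambda_{p,\phi_p}(v)\lesssim 1/\log v$. I therefore test on the tent function $f(x)=(r-|x|)^+$ with $r\simeq v^{1/D}$. One checks directly that $\|f\|_p^p\simeq r^p V(r)$. Because $f$ is $1$-Lipschitz for the word metric and bounded by $r$, one has $\sum_x|f(xy)-f(x)|^p\lesssim V(r)\min(|y|,r)^p$, so splitting the Dirichlet form at $|y|\le r$ versus $|y|>r$ and using the pointwise estimate $|y|^p\phi_p(y)\asymp (1+|y|)^{-D}$ (whence $\sum_{|y|\le r}|y|^p\phi_p(y)\asymp \log r$) together with $\phi_p(\{|y|>r\})\asymp r^{-p}$, produces $\mathcal E_{p,\phi_p}(f)\lesssim V(r)\log r+V(r)\asymp V(r)\log r$. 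Dividing by $\|f\|_p^p$ yields $\Lambda_{p,\phi_p}(v)\lesssim v^{-p/D}\log v$.

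\textbf{Main obstacle.} The delicate step is the critical upper bound. The generic Theorem~\ref{pro-compphiG} misses the correct rate by a polynomial factor because the convolution with $\mathbf u$ in its proof loses exactly the logarithm available at $\alpha=p$; one must therefore produce an explicit extremal function. The tent is natural because it saturates Corollary~\ref{cor-ppi} in the same way the indicator of a ball saturates the elementary pseudo-Poincar\'e inequality at $\alpha>p$. The matching between the logarithmic gain in Corollary~\ref{cor-ppi} and the logarithmic divergence of $\sum_{|y|\le r}|y|^p\phi_p(y)$ is the combinatorial hinge that forces the common rate $v^{-p/D}\log v$ and closes the trichotomy.
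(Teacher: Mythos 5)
Your proof is correct, and your handling of the two non-critical cases and of the lower bound at $\alpha=p$ follows exactly the route the paper indicates (Theorem~\ref{th-LW}, respectively Corollary~\ref{cor-ppi} re-run through the truncation argument of Proposition~\ref{pro-JLalpha}). The divergence is in the critical upper bound, and there your stated reason for abandoning Theorem~\ref{pro-compphiG} is not right. You assert that $M_{p,\rho_p}(t)\simeq 1+\log(1+t)$ and hence that the comparison theorem only gives $\Lambda_{p,\phi_p}(v)\lesssim 1/\log v$. That value of $M_{p,\rho_p}$ comes from a misprint in the remark following the definition of $M_{p,\rho}$: direct computation from $M_{p,\rho}(t)=t^p\left(\int_0^t s^{p-1}/\rho(s)\,ds\right)^{-1}$ with $\rho(s)=(1+s)^p$ gives $M_{p,\rho_p}(t)\simeq t^p/(1+\log(1+t))$, not $1+\log(1+t)$. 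With the correct $M$, the first ("inf'') form of Theorem~\ref{pro-compphiG} reads
\[
\Lambda_{p,\phi_p}(v)\lesssim \inf_{s>0}\left\{\frac{1}{(1+s)^p}+\log(e+s)\,\Lambda_{p,\mathbf u}(v)\right\},
\]
and optimizing at $s\simeq\Lambda_{p,\mathbf u}(v)^{-1/p}\simeq v^{1/D}$ yields $\Lambda_{p,\phi_p}(v)\lesssim v^{-p/D}\log(e+v)$ directly. This is exactly the paper's proof; its explicit $p=1$ computation, $\Lambda_{1,\phi_1}(v)\lesssim(\log(e+s))\Lambda_{1,G}(v)+1/s$, is consistent with $M_{1,\rho_1}(s)=s/\log(1+s)$ and not with the misprinted form. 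So the ``main obstacle'' paragraph mischaracterizes the situation: Theorem~\ref{pro-compphiG} is not too weak, and the extra logarithm is not lost.

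That said, the alternative argument you supply is itself valid and, in some respects, more transparent. The tent function $f=(r-|x|)^+$, with $\|f\|_p^p\simeq r^pV(r)$ and $\mathcal E_{p,\phi_p}(f)\lesssim V(r)\log(e+r)$ (from $|f(xy)-f(x)|\le\min(|y|,r)$, the support estimate, and $\sum_{|y|\le r}|y|^p\phi_p(y)\asymp\log r$, $\phi_p(\{|y|>r\})\asymp r^{-p}$), gives the upper bound by a completely explicit computation without invoking the abstract comparison machinery. What the paper's route buys is uniformity: Theorem~\ref{pro-compphiG} gives the upper bound for all three regimes of $\alpha$ and any weak-moment $\rho$ at once, whereas the tent computation has to be redone for each $\rho$. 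What your route buys is an explicit near-extremizer and a self-contained verification of the critical case, which is useful pedagogically and makes the logarithmic loss concrete. Either way, you should fix the justification: the switch to the tent function is a stylistic choice, not a necessity forced by any weakness of Theorem~\ref{pro-compphiG}.
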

\begin{proof}The lower bounds on $\Lambda_{p,\phi_\alpha}$
follow easily from the previous corollary and the argument
in \cite{CSCiso} as explained in the previous section. 
The upper bounds on $\Lambda_{p,\phi_\alpha}$ follows from Theorem
\ref{pro-compphiG}. For instance,
in the case $\alpha=1$, 
Theorem\ref{pro-compphiG} gives 
$$\Lambda_{1,\phi_1}(v)\lesssim (\log (e+ s))\Lambda_{1,G}(v) + \frac{1}{s}.$$
This is optimized by the choice  $s=1/\Lambda_{1,G}(v)$ and we know that 
$\Lambda_{1,G}(v)\simeq (v)^{-1/D}$. Hence
$\Lambda_{1,\phi_1}(v)\lesssim v^{-1/D}\log (e+ v)$.
\end{proof}

\bibliographystyle{amsplain}
%\bibliography{Thesis}

\providecommand{\bysame}{\leavevmode\hbox to3em{\hrulefill}\thinspace}
\providecommand{\MR}{\relax\ifhmode\unskip\space\fi MR }
% \MRhref is called by the amsart/book/proc definition of \MR.
\providecommand{\MRhref}[2]{%
  \href{http://www.ams.org/mathscinet-getitem?mr=#1}{#2}
}
\providecommand{\href}[2]{#2}

\end{document}